\theoremstyle{definition}
\newtheorem{assumption}{Assumption}
\DeclareMathOperator*{\argmax}{arg\,max}
\DeclareMathOperator*{\diag}{diag}
\DeclareMathOperator*{\blkdiag}{blkdiag}
\title{Resilient Distributed Field Estimation \thanks{\funding{This material is based upon work supported by DARPA under agreement number FA8750-16-2-0033, by the Department of Energy under award number DE-OE0000779, and by the National Science Foundation under Award Number CCF 1513936.}}}
\author{Yuan Chen \thanks{Department of Electrical and Computer Engineering, Carnegie Mellon University, Pittsburgh, PA (\email{yuanche1@andrew.cmu.edu}, \email{soummyak@andrew.cmu.edu}, \email{moura@andrew.cmu.edu}).}
\and{Soummya Kar \footnotemark[2]} 
\and {Jos\'{e} M. F. Moura}\footnotemark[2]}
\begin{document}

\maketitle

\begin{abstract}
	We study resilient distributed field estimation under measurement attacks. A network of agents or devices measures a large, spatially distributed physical field parameter. An adversary arbitrarily manipulates the measurements of some of the agents. Each agent's goal is to process its measurements and information received from its neighbors to estimate only a few specific components of the field. We present \textbf{SAFE}, the Saturating Adaptive Field Estimator, a consensus+innovations distributed field estimator that is resilient to measurement attacks. Under sufficient conditions on the compromised measurement streams, the physical coupling between the field and the agents' measurements, and the connectivity of the cyber communication network, \textbf{SAFE} guarantees that each agent's estimate converges almost surely to the true value of the components of the parameter in which the agent is interested. Finally, we illustrate the performance of \textbf{SAFE} through numerical examples. 
\end{abstract}

\begin{keywords}
	Resilient estimation, distributed estimation, distributed inference
\end{keywords}

\section{Introduction}\label{sect: intro}
This paper is concerned with resilient distributed field estimation under measurement attacks. We design a distributed estimator that we refer to as the Saturating Adaptive Field Estimator (\textbf{SAFE}) of a large dimensional unknown field when an adversary arbitrarily changes some of the agents' measurements. In contrast to existing work on resilient distributed estimation, each agent is only interested in estimating a fraction of the field, instead of the entire field. Under appropriate conditions, we prove strong consistency of each agent's estimate. When the unknown field is very large, \textbf{SAFE} dramatically reduces the local processing and communication at each agent, making it better suited in practical scenarios.

\subsection{Motivation}
The Internet of Things (IoT) brings about many multi-agent systems applications, such as road side unit (RSU) networks for monitoring traffic and controlling traffic signals in smart cities~\cite{TrafficRSU}, teams of robots mapping an unknown environment~\cite{RobotMapping}, or state estimation in the power grid~\cite{PowerGrid}. In these applications, the individual devices measure a component or a few components of a spatially distributed field and process their data to learn about their physical surroundings. 
For example, in traffic management, the network of road side units estimates traffic conditions throughout the entire city (a large, spatially distributed field) to effectively control traffic signals~\cite{TrafficRSU, ConnectedRoads}. 

This paper studies distributed field estimation~\cite{SahuRandomFields}. A team of agents or devices makes repeated (over time) local measurements of an unknown spatially distributed field. 
That is, each agent's measurements are \textit{physically coupled} to only a few components of the field. The agents process their measurements and the information they receive from neighbors over a \textit{cyber} communication network. Each agent only estimates a portion of the unknown field, because, due to the size of the field, it is infeasible for each individual agent to estimate the \textit{entire} field. This contrasts with common distributed estimation settings where, even though the agents make local measurements, the goal of each agent is to estimate the entire field. {\color{black} Certain IoT setups involve large numbers of devices, distributed throughout a large physical environment, so it is unrealistic for every agent to estimate the entire field.} In multi-robot navigation and mapping, for example, each robot only measures its local surroundings (instead of the entire unknown environment), and, through collaboration with close-by neighbors, also only estimates the field in nearby locations.  {\color{black} Similarly, in smart cities, each road side unit (RSU) may be tasked with monitoring the traffic within a local neighborhood, instead of estimating the state of traffic throughout the entire city.}

Security is a prominent challenge in deploying IoT applications as individual devices are vulnerable to adversarial cyber-attacks. Autonomous vehicles and mobile robots, for example, may fall victim to cyber-attacks that manipulate its onboard sensors and hijack its control systems~\cite{VehiclesExample2, HACMS}. Without adequate damage mitigation, compromised devices jeopardize the functionality and reliability of these team based systems. In this paper, we focus on data and measurement attacks, where an adversary manipulates a subset of the agents' measurements, arbitrarily changing their data. Our goal is to ensure that \textit{all} of the agents, even those with compromised measurement streams, consistently estimate the fraction of the field in which they are interested. To this end, we present \textbf{SAFE}, the Saturating Adaptive Field Estimator, a distributed field estimator that is resilient to measurement attacks. 

\subsection{Literature Review}

Existing work in resilient multi-agent computation has focused on problems where the agents share a common (homogeneous) processing objective. In the classic Byzantine generals problem, a group of loyal agents must agree on whether or not to attack an enemy city while treacherous agents attempt to disrupt the decision-making process~\cite{ByzantineGenerals}. The agents pass messages to each other, following a specific protocol in an all-to-all communication scheme, and reach a consensus on their decision as long as more than two thirds of the agents are loyal~\cite{ByzantineGenerals, SIAMByzantine}. References~\cite{ResilientConsensus, LeBlanc1, Pasqualetti1} study resilient multi-agent consensus in the presence of adversaries over sparse (i.e., not all-to-all) communication topologies. One application of resilient consensus is robot gathering~\cite{SIAMConsensus}, where a team of robots must rendezvous at a particular location. Reference~\cite{SIAMConsensus} designs a robot gathering algorithm that may tolerate Byzantine failures. In these consensus problems~\cite{ByzantineGenerals, ResilientConsensus, LeBlanc1, Pasqualetti1, SIAMConsensus}, the (loyal) agents share the same goal: to reach agreement on a decision or value.

Beyond consensus, prior work in resilient inference has also focused on problems where the agents have a common objective. In inference tasks, agents process their measurements to perform hypothesis testing or to recover an unknown parameter. This differs from consensus tasks, where there are no measurements involved and agents only need to reach agreement. References~\cite{Fawzi, Shoukry, Pasqualetti2} address resilient state estimation under data attacks in centralized, single-agent systems. 
References~\cite{Varshney1, Varshney2, FusionCenterEstimation1, Zhang2, SIAMChangeDetection} study decentralized Byzantine inference, where agents transmit local data or decisions to a fusion center for processing. Adversarial (Byzantine) agents transmit falsified data to the fusion center whose goal is to recover the parameter without being misled by the adversarial agents. 

Our prior work has addressed resilient estimation in fully distributed settings (no fusion center) in the presence of Byzantine agents~\cite{ChenIoT, ChenDistributed1} and measurement attacks~\cite{ChenSAGE}. In~\cite{ChenIoT, ChenDistributed1, ChenSAGE}, each agent in a network of agents makes partial measurements of an unknown parameter and all of the agents attempt to estimate the \textit{entire} parameter in the presence of adversarial attacks. That is, all of the agents share the same estimation goal. In distributed settings, aside from resilient estimation, another relevant problem is resilient optimization~\cite{Sundaram3, SIAMFormationControl}, where a team of agents, each with access to its own \textit{local} objective function, cooperate to optimize a single \textit{global} objective (e.g., the sum of their local objective functions). Reference~\cite{Sundaram3} designs countermeasures to mitigate the damage from adversarial agents in distributed optimization tasks, and reference~\cite{SIAMFormationControl} solves attack resilient vehicular formation control as a distributed optimization problem.

\subsection{Summary of Contributions}
Unlike the existing work in resilient consensus~\cite{ByzantineGenerals, ResilientConsensus, LeBlanc1, Pasqualetti1, SIAMConsensus, SIAMByzantine}, centralized and decentralized inference~\cite{Fawzi, Shoukry, Pasqualetti2, Varshney1, Varshney2, FusionCenterEstimation1, Zhang2, SIAMChangeDetection}, distributed parameter estimation~\cite{ChenIoT, ChenDistributed1, ChenSAGE} and optimization~\cite{Sundaram3, SIAMFormationControl}, which all study setups with a common processing objective, in this paper, we consider the case when agents have different heterogeneous estimation goals. In particular, we study resilient distributed field estimation, where each agent seeks to estimate only a few components of a high-dimensional spatially distributed field parameter while under measurement attacks. The problem of each agent only estimating a few components of the field was previously considered in reference~\cite{SahuRandomFields} with static fields and reference~\cite{KhanKF} that estimated time-varying random fields by designing distributed Kalman Filters, but neither considered adversarial attacks on the agents. {\color{black} This paper studies distributed field estimation with attacks on the agents.}

We consider a setup that is similar to the setup of our previous work~\cite{ChenSAGE}: a team of agents each makes noisy measurements (over time) of a fraction of an unknown field parameter, and an adversary arbitrarily manipulates some of these measurements. The goal in this paper, however, is different than the goal in~\cite{ChenSAGE}. In~\cite{ChenSAGE}, each agent attempts to estimate the \textit{entire} unknown parameter, while, here, each agent attempts, in collaboration with nearby neighbors, to estimate only a portion of the unknown parameter in which it is interested. Hence, the setup here is more practical than in~\cite{ChenSAGE} when fields are spatially very large (for example, in monitoring traffic conditions over a city~\cite{TrafficRSU} or in field estimation in the power grid~\cite{PowerGrid}). 

Because each agent only estimates a fraction of the field, there are major difficulties not addressed in~\cite{ChenSAGE} that we successfully consider here. Namely, since neighboring agents may be interested in \textit{different} portions of the field, each agent must additionally process the data received from its neighbors to extract the information relevant to  the portion of the field that it wishes to estimate. {\color{black} This leads to several technical differences between our work here and~\cite{ChenSAGE}. First, the heterogenous processing goals induce different topology conditions on the communication network. Second, the notion of consensus needs to be appropriately modified, as agents are no longer interested in reaching an exact consensus on the entire parameter. Rather, the focus is to attain consensus on overlapping components of interest. This changes the nature of the dynamics and interactions between the agents and requires new analysis methods that we develop in this paper.}

This paper describes \textbf{SAFE}, the Saturating Adaptive Fields Estimator, a distributed field estimator that is resilient to measurement attacks. \textbf{SAFE} is a \textit{consensus+innovations} estimator~\cite{Kar1, Kar3} where each agent iteratively updates a local estimate as a weighted sum of its previous estimate, its neighbors' estimates, and its local innovation, the difference between the agent's observed measurement and predicted measurement (based on its estimate). In the estimate update, each agent applies an adaptive gain to its local innovation to ensure that its magnitude is below a time-varying threshold. A key challenge for \textbf{SAFE} is designing this threshold to mitigate the effect of compromised measurements without limiting the information from the uncompromised measurements. The performance of \textbf{SAFE} critically depends on properly selecting the weights and threshold used in the estimate update. We provide a procedure to select this properly, and we show that, under sufficient conditions on the compromised measurement streams and the relationship between the physical coupling and the cyber communication network, \textbf{SAFE} guarantees that all of the agents' estimates are strongly consistent, i.e., each agent's estimate converges to the true value of the portion of the field that it seeks to estimate. To prove consistency, we first show that, for each component of the field, all interested agents reach consensus in their estimates, and then we show that the value of the estimate on which the agents agree converges almost surely to the true value of the field.

The rest of this paper is organized as follows. In Section~\ref{sect: background}, we provide background on the measurement, communication, and attack models, and we formalize the resilient distributed field estimation task. Section~\ref{sect: algorithm} presents \textbf{SAFE}, a distributed field estimator that is resilient to measurement attacks. In Section~\ref{sect: analysis}, we show that, under sufficient conditions on the compromised measurement streams, the physical coupling of the parameter to the measurement streams, and the connectivity of the cyber communication network, \textbf{SAFE} ensures strongly consistent local estimates. Section~\ref{sect: examples} illustrates the performance of \textbf{SAFE} through numerical examples, and we conclude in Section~\ref{sect: conclusion}. 

\section{Background}\label{sect: background}
This section reviews notation and background on the measurement, communication, and attack models, and formalizes the distributed field estimation task.

\subsection{Notation}
Let $\mathbb{R}^k$ be the $k$ dimensional Euclidean space, $I_k$ the $k$ by $k$ identity matrix, and $\mathbf{1}_k$ the $k$ dimensional vector of ones. The column vector $e_j$, $j = 1, 2, \dots, k$, is the $j^{\text{th}}$ canonical basis vector of $\mathbb{R}^k$: $e_j$ is a column vector with $1$ in the $j^{\text{th}}$ row and $0$ everywhere else. The operators $\left\lVert \cdot \right \rVert_2$ and $\left\lVert \cdot \right\rVert_\infty$ are, respectively, the $\ell_2$ norm and the $\ell_\infty$ norm. For matrices $A$ and $B$, $A \otimes B$ is the Kronecker product. For a symmetric matrix $A = A^\intercal$, $A \succeq 0$ ($A \succ 0$) means that $A$ is positive semidefinite (positive definite). For a matrix $A$, $[A]_{i, j}$ is the element in its $i^{\text{th}}$ row and $j^{\text{th}}$. For a vector $v$, $[v]_i$ is its $i^{\text{th}}$ component.

Let $G = (V, E)$ be a simple undirected graph (no self loops or multiple edges) with vertex set $V = \left\{1, \dots, N \right\}$ and edge set $E$. The neighborhood of vertex $n$, $\Omega_n$, is the set of vertices that share an edge with $n$, and $d_n$ is the size of its neighborhood $\left\lvert \Omega_n \right \rvert$. For the graph $G$, let $D = \diag (d_1, \dots, d_N)$ be the degree matrix, $A$, where $\left[A\right]_{n,l} = 1$ if $\left(n, l\right) \in E$ and $\left[A\right]_{n,l} = 0$ otherwise, be the adjacency matrix, and $L = D - A$ be the Laplacian matrix. The Laplacian matrix $L$ has ordered eigenvalues
$ 0 = \lambda_1(L) \leq \cdots \leq \lambda_N(L),$ and eigenvector $\mathbf{1}_N$ associated with the eigenvalue $\lambda_1(L) = 0.$ For a connected graph $G$, $\lambda_2(L) > 0$. References~\cite{Spectral, ModernGraph} provide additional details on spectral graph theory. 

In this paper, all random objects are defined on a common probability space $\left( \Omega, \mathcal{F} \right)$ equipped with a filtration $\mathcal{F}_t$. Reference~\cite{Stochastic} reviews stochastic processes and filtrations. 
The operators $\mathbb{P} \left( \cdot \right)$ and $\mathbb{E} \left( \cdot \right)$ are the probability and expectation operators, respectively. All inequalities involving random variables hold almost surely (a.s.), i.e., with probability $1$, unless otherwise stated.

\subsection{Measurement Model}
Consider a field over a large physical area represented by the unknown, static (field) parameter $\theta^* \in \mathbb{R}^M$. 
A network of $N$ agents or devices $\{1, 2, \dots, N\}$ makes local streams of measurements of the field. 
At each time $t = 0, 1, 2, \dots$, every agent $n$ makes a local measurement
\begin{equation}\label{eqn: noAttackMeasurement}
	y_n(t) = H_n \theta^* + w_n(t),
\end{equation}
where $w_n(t)$ is the local measurement noise. The measurement $y_n(t)$ has dimension $P_n$, i.e., at each time $t$, it makes $P_n$ \textit{scalar} measurements, with $P_n << M$, where $M$ is the dimension of the field parameter $\theta^*$. 
\begin{assumption}\label{ass: noise} 
The measurement noise $w_n(t)$ is temporally independently and identically distributed (i.i.d.) with zero mean and covariance matrix $\Sigma_n$. The measurement noise is independent across agents. The sequence $\left\{ w_n(t) \right\}$ is $\mathcal{F}_{t+1}$ adapted and independent of $\mathcal{F}_t$.
\end{assumption}  
The filtration $\mathcal{F}_t$ in Assumption~\ref{ass: noise} will be defined shortly. We use the same measurement model as~\cite{ChenSAGE}, but, as we will detail in~\ref{sect: fieldEstimation}, this paper studies a different estimation problem. In~\cite{ChenSAGE}, every agent attempts to estimate the entire parameter $\theta^*$, while, here, each agent attempts to estimate only a few components of $\theta^*$.

{\color{black}The matrix $H_n$ is the local measurement matrix of agent $n$ and models the physical coupling between the field $\theta^*$ and the local measurements $y_n(t)$. It states which components of the field affect the measurement $y_n(t)$. Each agent knows its measurement matrix $H_n$ a priori. We assume $H_n$ is sparse, which means that the measurement~\eqref{eqn: noAttackMeasurement} provides information on only a few components of $\theta^*$, i.e., each agent's measurement streams are only coupled to a few components of the (high-dimensional) field parameter. For example, in a robotic mapping and navigation application, where a team of robots attempts to estimate the locations of obstacles, each robot's local measurements only depend on its surroundings. If each component of $\theta^*$ represents the state of a particular location (e.g., if a location is occupied by an obstacle), then, each robot only measures its local surroundings, and the local measurements are physically coupled to only a few components of $\theta^*$, corresponding to nearby locations. }
The measurement matrix $H_n \in \mathbb{R}^{P_n \times M}$ captures the \textit{physical coupling} between the measurements $y_n(t)$ and the field $\theta^*$. The local measurement streams of agent $n$ are physically related to the $m^{\text{th}}$ component of $\theta^*$ if one or more entries of the $m^{\text{th}}$ \textit{column} of $H_n$ are nonzero. For each agent $n = 1, \dots, N$, we formally define the physical coupling set $\widetilde{\mathcal{I}}_n$ as follows.

\begin{definition}[Physical Coupling Set]
	For each agent $n$, the physical coupling set $\widetilde{\mathcal{I}}_n$ is the set of all components of $\theta^*$ that affect its local measurement $y_n(t)$. Formally, $\widetilde{\mathcal{I}}_n$ is the set of indices corresponding to nonzero columns of $H_n$, i.e.,
\begin{equation}\label{eqn: auxiliaryInterest}
	\widetilde{\mathcal{I}}_n = \left\{ m \in \left\{1, \dots M \right\} \vert H_n e_m \neq 0 \right\},
\end{equation}
where $e_m$ is the $m^{\text{th}}$ canonical (column) vector of $\mathbb{R}^M$. 
\end{definition}

\noindent Through its measurement $y_n(t)$, agent $n$ collects information about the components of $\theta^*$ specified by its physical coupling set.

We now describe the indexing convention from~\cite{ChenSAGE} for labeling all of the agents' measurements. Let $\mathbf{y}_t$ stack all of the agents' measurements at time $t$, i.e.,
\begin{equation}\label{eqn: stackedMeasurement}
	\mathbf{y}_t = \left[ \begin{array}{ccc} y_1^\intercal (t) & \cdots & y_N^\intercal (t) \end{array} \right]^\intercal = \mathcal{H} \theta^* + \mathbf{w}_t, 
\end{equation}
where 
	$\mathbf{w}_t = \left[\begin{array}{ccc} w_1^\intercal (t) & \cdots & w_N^\intercal(t) \end{array} \right]^\intercal$ and $\mathcal{H} = \left[\begin{array}{ccc} H_1^\intercal & \cdots & H_N^\intercal \end{array} \right]^\intercal$
are the stacked measurement noises and measurement matrices ($H_1, \dots, H_N$), respectively. The vector $\mathbf{y}_t$ has dimension $P = \sum_{n = 1}^N P_n$. 
In~\eqref{eqn: stackedMeasurement}, we represent $\mathbf{y}_t$ as the stack or concatenation of the \textit{vectors} $y_1(t), \dots, y_N(t)$. We now represent $\mathbf{y}_t$ as the concatenation of \textit{scalars}, labeling each scalar component of $\mathbf{y}_t$ from $1$ to $P$:
\begin{equation} \label{eqn: yRows}
	\mathbf{y}_t = \left[ \begin{array}{ccc} \underbrace{\begin{array}{ccc} y^{(1)}(t) & \cdots & y^{(P_1)}(t) \end{array}}_{y_1^\intercal(t)} & \cdots & \underbrace{\begin{array}{ccc}y^{(\overline{P}_N+1)} (t) & \cdots & y^{(P)}(t)\end{array}}_{y_N^\intercal(t)}  \end{array} \right]^\intercal,
\end{equation}
where  {\color{black} $\overline{P}_0 = 0$ and $\overline{P}_n = \sum_{j=1}^{n-1} P_j$ for $n = 2, \dots, N$. In~\eqref{eqn: yRows}, each element $y^{(p)} (t)$, $p = 1, \dots, P$, is a \textit{scalar}.\footnote{The variable $p$ indexes the scalar components.}} The scalar components $y^{(1)}(t), \dots, y^{(P_1)}(t)$ make up the vector measurement $y_1(t)$ for agent $1$ at time $t$, and, in general, the vector measurement of agent $n$ at time $t$ is made up of the scalar components
\begin{equation}\label{eqn: indexConvention}
	y_n(t) = \left[\begin{array}{ccc} y^{(\overline{P}_n + 1)}(t) & \cdots & y^{(\overline{P}_n + P_n )} (t) \end{array} \right]^\intercal.
\end{equation}
Following~\cite{ChenSAGE}, we now define measurement streams as follows.
\begin{definition}[Measurement Stream~\cite{ChenSAGE}]
A measurement stream 
is the collection of the scalar measurement $y^{(p)}(t)$ over all times $t = 0, 1, 2, \dots$, {\color{black} where $p$ is the index assigned in~\eqref{eqn: stackedMeasurement}.}\hfill $\small \blacksquare$
\end{definition}
\noindent For the rest of this paper, we refer to a measurement stream by its component index $p$.  The set $\mathcal{P} = \left\{1, \dots, P \right\}$ is the collection of all measurement streams. Following the convention of~\eqref{eqn: yRows} and~\eqref{eqn: indexConvention}, we also label the rows of $\mathcal{H}$ from $1$ to $P$, i.e., $\mathcal{H} = \left[\begin{array}{ccc} h_1 & \cdots & h_P \end{array} \right]^\intercal$, subject to the following assumption: 
\begin{assumption}\label{ass: normalization}
	The measurement vectors $h_p$, $p = 1, \dots, P$, have unit $\ell_2$ norm, i.e., $\left\lVert h_p \right\rVert_2 = 1$.
\end{assumption}

 

\subsection{Distributed Field Estimation}\label{sect: fieldEstimation}
Up to here, the problem set up is similar to that of~\cite{ChenSAGE}, where every agent attempts to estimate the \textit{entire} parameter $\theta^*$. In this paper, unlike in~\cite{ChenSAGE}, the goal of each agent $n$ is to estimate a subset of the components of $\theta^*$ as specified by its \textit{interest set} $\mathcal{I}_n$. 
\begin{definition}[Interest Set]\label{def: interestSet}
For each agent $n$, the interest set $\mathcal{I}_n$ is the ordered set of indices corresponding to the components of $\theta^*$ that it wishes to estimate. 
\end{definition}
\noindent Each interest set $\mathcal{I}_n$ is a subset of $\left\{1, \dots, M \right\}$, where $M$ is the dimension of $\theta^*$. We now review the indexing convention for interest sets from~\cite{SahuRandomFields}. {\color{black} For $r = 1, \dots, \left\lvert \mathcal{I}_n \right\rvert$, the expression
$\mathcal{I}_n(r) = m$
means that the $r^{\text{th}}$ element of the interest set $\mathcal{I}_n$ is component index $m$, and the expression
$\mathcal{I}_n^{-1} (m) = r$ means that component index $m$ is the $r^{\text{th}}$  element of the interest set $\mathcal{I}_n$. The elements of each interest set $\mathcal{I}_n$ are in ascending order, i.e., $\mathcal{I}_n(1) < \mathcal{I}_n(2) < \cdots < \mathcal{I}_n\left( \left\lvert \mathcal{I}_n \right\rvert \right)$.}  
Following~\cite{SahuRandomFields}, we assume that, for each agent, the physical coupling set is a subset of the interest set:
\begin{assumption}\label{ass: interestSubset}
	The physical coupling set $\widetilde{\mathcal{I}_n}$ is a subset of the interest set $\mathcal{I}_n$. 
\end{assumption}
\noindent Assumption~\ref{ass: interestSubset} states that each agent $n$ is at least interested in estimating all components of the parameter that are coupled to its local measurement streams. {\color{black} There is a relationship between the measurement matrix $H_n$ and the interest set $\mathcal{I}_n$. As a consequence of Assumption~\ref{ass: interestSubset}, each interest set $\mathcal{I}_n$ must include the indices corresponding to the nonzero columns $H_n$ (see~\eqref{eqn: auxiliaryInterest}). In this paper, the interest set $\mathcal{I}_n$ need not depend on the inter-agent communication network.}

In addition to the interest set $\mathcal{I}_n$, which collects all of the components in which agent $n$ is interested, we also need to keep track of all agents interested in a particular component. For each $m = 1, \dots, M$, define the set
\begin{equation}\label{eqn: Jn}
	\mathcal{J}_m = \left\{n \in \left\{1, \dots, N\right\} \vert m \in \mathcal{I}_n \right\}. 
\end{equation}
as the set of all \textit{agents} that are interested in estimating the component $m$ of the field~$\theta^*$. 
\begin{assumption}\label{ass: interestSubset2}
	For all $m = 1, \dots, M$, $\left\lvert \mathcal{J}_m \right\rvert > 0$. 
\end{assumption}
\noindent {\color{black}Assumption~\ref{ass: interestSubset2} states that for each component of $\theta^*$, there is at least one interested agent.  There does \textit{not} need to be a single agent interested in estimating all components of $\theta^*$.}

An important concept in estimation tasks is global observability, that we define next in context of parameter estimation.
\begin{definition}[Global Observability]\label{def: globalObs}
	Consider a set of measurement streams $\mathcal{X} = \left\{p_1, \dots, p_{\left\lvert \mathcal{X} \right\rvert} \right\} \subseteq \mathcal{P}$, and let 
	$\mathcal{H}_{\mathcal{X}} = \left[\begin{array}{ccc} h_{p_1} & \cdots & h_{p_{\left\lvert \mathcal{X} \right \rvert}} \end{array} \right]^\intercal$
be the matrix that stacks all measurement vectors associated with streams in $\mathcal{X}$. 
The set $\mathcal{X}$ is globally observable if the $M \times M$ observability Grammian
	$\mathcal{G}_{\mathcal{X}} = \mathcal{H}_{\mathcal{X}}^\intercal \mathcal{H}_{\mathcal{X}} = \sum_{p \in \mathcal{X}} h_p h_p^\intercal$
is invertible. \hfill $\small \blacksquare$
\end{definition}
\noindent If a set of measurement streams is globally observable, then, when there is no measurement noise, it is possible to unambiguously determine $\theta^*$ using a single snapshot in time of the measurements in $\mathcal{X}$. 

\begin{assumption}\label{ass: globalObs}
	The set of all measurement streams $\mathcal{P}$ is globally observable.
\end{assumption}

\noindent Even a centralized estimator, which has access to all measurement streams, requires {\color{black} global observabilty} to construct a consistent estimate. So, it makes sense to assume it in our distributed setting. {\color{black}Assumption~\ref{ass: globalObs} is standard in previous work on distributed estimation~\cite{Kar2, Kar3, SahuRandomFields}.}

{\color{black} While we assume \textit{global} observabilty, we do not require \textit{local} observability. }An individual agent most likely does not have enough information from its local measurements alone to estimate all of the components in its interest set. {\color{black} For example, in robotic mapping and navigation, a robot may be interested in estimating the state of a location that it does not directly measure.}
To accomplish their estimation goals, agents need to communicate with neighbors. {\color{black} We provide an example in Section~\ref{sect: examples} where agents \textit{must} communicate with neighbors to accomplish their estimation goals.} They do this over a \textit{cyber} communication network, modeled by a time varying graph $G(t) = (V, E(t))$, where $V$ is the set of all agents, and $E(t)$ is the set of inter-agent communication links at time $t$. For each agent $n$, $\Omega_n(t)$ is its neighborhood at time $t$. In real world conditions, their local cyber networks change, for example, due to shadowing or random failures of local wireless channels. 

\begin{assumption}\label{ass: iidGraphs}
The Laplacians $\left\{L(t)\right\}$ (of the graphs $\left\{ G(t) \right\}$) form an i.i.d. sequence with mean $\mathbb{E} \left[L(t) \right] = \overline{L}.$ The sequence $\left\{L(t)\right\}$ is $\mathcal{F}_{t+1}$-adapted and independent of $\mathcal{F}_t$. The sequence of Laplacians is independent of $\left\{w_n(t) \right\}$. 
\end{assumption}

\noindent In our setup, the only sources of randomness are the sequences $\left\{w_n(t)\right\}$, $n = 1, \dots, N$ and {\color{black} $\left\{L(t) \right\}$.} The filtration $\mathcal{F}_t$ is the natural filtration:
\begin{equation}\label{eqn: naturalFiltration}
	\mathcal{F}_t = \sigma\left(L_0, \dots, L_{t-1}, w_1(0), \dots, w_1(t-1), \dots, w_N(0), \dots, w_N(t-1) \right)
\end{equation}

We make the following assumption about the relationship between the cyber-layer communication network $G(t)$ and the physical coupling between the components of $\theta^*$ and the agents' local measurements. For each parameter component $m = 1, \dots, M$, let $G_m(t)$ be the subgraph of $G(t)$ induced by $\mathcal{J}_m$, the agents that are interested in the $m^{\text{th}}$ component.\footnote{$G_m(t)$ is the graph formed from $G$ by the subset of agents (vertices) interested in the $m^{\text{th}}$ component ($\mathcal{J}_m$) and the subset of edges connecting pairs of agents in $\mathcal{J}_m$. }  Recall that, as a consequence of Assumption~\ref{ass: interestSubset}, the set of agents interested in a specific component of $\theta^*$ includes the set of agents whose measurements are physically coupled to that specific component. 

\begin{assumption}\label{ass: connectivity}
	{\color{black}For each component $m = 1, \dots, M$ of $\theta^*$, let $L_m(t)$ be the graph Laplacian of the induced subgraph $G_m(t)$, and let $\overline{L}_m = \mathbb{E} \left[L_m(t) \right]$. Then, $\overline{L}_m$ satsifies  
\begin{equation}\label{eqn: connectivityCondition}
	\lambda_2 (\overline{L}_m) > 0,
\end{equation} 
for each $m = 1, \dots, M$.}
\end{assumption} 
\noindent {\color{black} The communication network induced by $\mathcal{J}_m$ is connected on average for every $m = 1, \dots, M$. That is, the mean Laplacian $\overline{L}_m$ satisifies the connectivity condition~\eqref{eqn: connectivityCondition}. } Assumption~\ref{ass: connectivity} is a sufficient condition for consistent distributed field estimation in the absence of adversarial attacks~\cite{SahuRandomFields}. We assume it also in this paper for distributed field estimation in the presence of adversaries. 

\subsection{Attack Model}
A malicious attacker corrupts a subset of the measurement streams, replacing the true measurements with arbitrary values, modeled as
\begin{equation}\label{eqn: attackMeasurement}
	y_n(t) = H_n \theta^* + w_n(t) + a_n(t).
\end{equation}
{\color{black} The attacks may be either deterministic or random, and the agents are unaware, a priori, of how the attacker chooses $a_n(t)$. We seek to ensure that all agents consistently estimate their components of interest regardless of how the attacked is carried out. }Using the same indexing convention as~\eqref{eqn: indexConvention}, we label each \textit{scalar} component of $a_n(t)$ with the indices $\overline{P}_n + 1, \dots, \overline{P}_n + P_n,$ where, recall $\overline{P}_n = \sum_{j = 1}^{n-1} P_n$. That is, $a_n(t) = \left[\begin{array}{ccc} a^{(\overline{P}_n + 1)}(t) & \cdots & a^{(\overline{P}_n + P_n)}(t) \end{array} \right]^\intercal.$
We consider measurement stream $p$ ($p = 1, \dots, P$) to be compromised or under attack if, at \textit{any} time $t = 0, 1, \dots,$ the scalar attack element $a^{(p)}(t) \neq 0$. The set of all measurement streams $\mathcal{P}$ may be partitioned into a set of compromised measurement streams \begin{equation} \mathcal{A} = \left\{p \in \mathcal{P} \Big\vert \exists t = 0, 1, \dots, \: a^{(p)}(t) \neq 0 \right\}, \end{equation} and a set of uncompromised measurement streams $\mathcal{N} = \mathcal{P} \setminus \mathcal{A}$. These sets do not change over time, and agents do not know which measurement streams are compromised. The attacker satisfies the following assumption:
\begin{assumption}\label{ass: attackSetSize}
	The attacker may only manipulate a subset (but not all) of the measurement streams, i.e., $0 \leq \left\lvert \mathcal{A} \right\rvert < P$. 
\end{assumption}

\section{\textbf{SAFE}: Saturating Adaptive Field Estimator}\label{sect: algorithm}
In this section, we describe \textbf{SAFE}, the Saturating Adaptive Field Estimator, a distributed field estimator that is resilient to measurement attacks. \textbf{SAFE} is a \textit{consensus + innovations} estimator~\cite{Kar1, Kar3}, and it is the resilient version of the distributed field estimator from~\cite{SahuRandomFields}. That is, \textbf{SAFE} copes with measurement attacks while the estimator from~\cite{SahuRandomFields} does not. \textbf{SAFE} is based on the Saturating Adaptive Gain Estimator (\textbf{SAGE}) for resilient distributed \textit{parameter} estimation we presented in~\cite{ChenSAGE}. In~\cite{ChenSAGE}, every agent is intersted in estimating the entire parameter $\theta^*$ while under measurement attacks. Unlike~\cite{ChenSAGE} (\textbf{SAGE}), this paper (\textbf{SAFE}) addresses resilient distributed \textit{field} estimation, where each agent is only interested in estimating a subset of the components of $\theta^*$. 

\subsection{Algorithm}\label{sect: algorithmDescription}
In \textbf{SAFE}, each agent $n$ maintains a $\left\lvert \mathcal{I}_n \right\rvert$-dimensional local estimate $x_n(t)  \in \mathbb{R}^{\left\lvert \mathcal{I}_n \right\vert}$ of all of the components of $\theta^*$ which it is interested in estimating, where the $i^{\text{th}}$ component of $x_n(t)$ is an estimate of the $i^{\text{th}}$ member of the interest set $\mathcal{I}_n$. 
Each agent initializes its estimate as $x_n(0) = 0$. Then each iteration of \textbf{SAFE} consists of three steps: 1.) message passing, 2.) message processing, and 3.) estimate update. 

\underline{Message Passing}: Each agent $n$ communicates its current estimate $x_n(t)$ to each of its neighbors $l \in \Omega_n(t)$. 

\underline{Message Processing}: Neighboring agents may be interested in different components of $\theta^*$. Each agent $n$ extracts only the information, from its neighbors' estimates, about the components of $\theta^*$ it is interested in estimating and ignores the irrelevant information. The agents use the following message processing procedure.
Each agent $n$ receives $x_l(t)$ from each of its neighbors $l \in \Omega_n(t)$. Then, agent $n$ constructs a \textit{censored} version, $x^c_{l, n}(t)$, of the estimate from agent $l$ element-wise as follows:
\begin{equation}\label{eqn: censorReceived}
	\left[x^c_{l, n}(t) \right]_i = \left\{ \begin{array}{ll} \left[x_{l}(t)\right]_{\mathcal{I}_l^{-1} \left(\mathcal{I}_n(i)\right)}, & \text{if } \mathcal{I}_n(i) \in \mathcal{I}_l, \\ 0, &\text{otherwise.} \end{array} \right.
\end{equation}
Agent $n$ constructs the censored message $x^c_{l, n}(t)$ element-wise by placing the $j^{\text{th}}$ element of $x_l(t)$, where $j = \mathcal{I}_l^{-1} \left(\mathcal{I}_n(i)\right)$ (i.e., agent $l$'s estimate of $\left[\theta^* \right]_{{\mathcal{I}}_n(i)}$, following the indexing convention for interest sets introduced in Section~\ref{sect: fieldEstimation}), in the $i^{\text{th}}$ element of $x^c_{l, n}(t)$ if agent $l$ is also interested in $\mathcal{I}_n (i)$, and placing zero in the $i^{\text{th}}$ element otherwise. 

In addition to processing its neighbors' estimates, each agent $n$ also needs to compute a processed version of its own estimate for the estimate update step. For each of its neighbors $l \in \Omega_n(t)$, agent $n$ constructs $x^p_{l, n}$, a processed version of its own estimate, element-wise as follows:
\begin{equation}\label{eqn: censorSelf}
	\left[x^p_{l, n}(t) \right]_i = \left\{ \begin{array}{ll} \left[x_{n}(t)\right]_i, & \text{if } \mathcal{I}_n(i) \in \mathcal{I}_l, \\ 0, &\text{otherwise.} \end{array} \right.
\end{equation}
That is, $x_{l, n}^p (t)$ preserves the $i^{\text{th}}$ element of $x_n(t)$ if agent $l$ is also interested in $\mathcal{I}_n(i)$ and places zero in the other elements. 

\underline{Estimate Update}: Each agent maintains a time-averaged measurement:
\begin{equation}\label{eqn: timeAvgMeasurement}
	\overline{y}_n(t) = \frac{t}{t+1} \overline{y}_n(t-1) + \frac{1}{t+1} y_n(t),
\end{equation}
with initial condition $\overline{y}_n(-1) = 0$. If measurement stream $p$ is uncompromised ($p \in \mathcal{N}$), then, the time averaged version of $p$ follows
	$\overline{y}^{(p)} = h_p^\intercal \theta^* + \overline{w}^{(p)}(t),$
where $\overline{w}^{(p)}(t) = \frac{1}{t+1} \sum_{j = 0}^t w^{(p)}(j).$ In addition to the time-averaged measurement $\overline{y}_n(t)$, the estimate update step requires the censored measurement matrix $H^c_n$, which is the measurement matrix $H_n$ after removing all \textit{columns} whose indices are not in $\mathcal{I}_n$. The matrix $H^c_n$ has $P_n$ rows and $\left\lvert \mathcal{I}_n \right \rvert$ columns. 

Each agent updates its estimate as
\begin{equation}\label{eqn: estimateUpdate}
\begin{split}
	\!x_n(t+1)\! = &x_n(t) - \beta_t \!\! \!\!\sum_{l \in \Omega_n(t)} \!\! \underbrace{\left(x_{l, n}^p(t) - x_{l, n}^c(t)\right)}_{\text{Neighborhood Consensus}}\!\!\! + \alpha_t {H_n^c}^\intercal K_n(t) \underbrace{\left( \overline{y}_n(t) - H_n^c x_n(t)\right)}_{\text{Local Innovation}},\!\!
\end{split}
\end{equation}
where the innovations and consensus weight sequences $\alpha_t > 0$ and $\beta_t > 0$ will be defined shortly, and $K_n(t)$ is a diagonal gain matrix that depends on $\overline{y}_n (t) - H_n^c x_n(t)$. Recall that, following~\eqref{eqn: indexConvention},  $\overline{P}_n + 1, \dots, \overline{P}_n + P_n$ are the indices for the components of $\overline{y}_n(t)$. For each $p = \overline{P}_n + 1, \dots, \overline{P}_n + P_n$, define the scalar gain
\begin{equation}\label{eqn: smallK}
	k_p (t) = \min\left(1, \gamma_t \left\lvert y^{(p)}(t) - {h^c_p}^\intercal x_n(t) \right \rvert^{-1} \right),
\end{equation}
where ${h^c_p}^\intercal$ is the row vector ${h_p^\intercal}$ after removing components whose indices are not in $\mathcal{I}_n$, and $\gamma_t$ is a scalar threshold sequence that will be defined shortly. Then, $K_n(t)$ is defined as
\begin{equation}\label{eqn: KnDef}
	K_n(t) = \diag \left(k_{\overline{P}_n + 1}(t), \dots, k_{\overline{P}_n + P_n}(t) \right).
\end{equation}

The gain matrix $K_n(t)$ clips (or saturates) each component of the local innovation at the threshold level $\gamma_t$, i.e., it ensures that the $\ell_\infty$ norm of the scaled innovation, $\left\lVert K_n(t) \left(\overline{y}_n(t) - H_n^c x_n(t) \right) \right\rVert_\infty$, does not exceed the threshold $\gamma_t$. The goal in clipping the local innovations is to limit the impact of measurement attacks on the estimate update. The key challenge in designing the \textbf{SAFE} algorithm is selecting the threshold sequence. If the threshold $\gamma_t$ is not chosen correctly, we may limit the impact of uncompromised measurement streams ($p \in \mathcal{N}$) on the estimate update. We choose the threshold $\gamma_t$, along with the innovations and consensus weight sequences $\alpha_t$ and $\beta_t$, to balance these two effects as follows:
\begin{enumerate}
	\item The sequences $\left\{\alpha_t\right\}$ and $\left\{\beta_t \right\}$ are given by
		$\alpha_t = \frac{a}{(t+1)^{\tau_1}}, \: \beta_t = \frac{b}{(t+1)^{\tau_2}},$
	where $a, b > 0$ and $0 < \tau_2 < \tau_1 < 1$.
	\item The threshold sequence $\left\{ \gamma_t \right\}$ is given by
		$\gamma_t = \frac{\Gamma}{(t+1)^{\tau_\gamma}},$
	where $\Gamma > 0$ and $0 < \tau_\gamma < \min \left(\frac{1}{2}, \tau_1 - \tau_2, 1-\tau_1 \right)$.
\end{enumerate}

The innovations and consensus weight sequences decay in time, with the innovation weight sequence $\left\{ \alpha_t \right\}$ decaying at a faster rate. The threshold sequence $\left\{\gamma_t \right\}$ also decays over time. Intuitively, the threshold $\gamma_t$ decaying means that the estimate update~\eqref{eqn: estimateUpdate} allows initially for large contributions from the local innovation, incorporating more information from the measurements. Then, as the number of iterations increases, the local estimates should move closer to the true value of the parameter, resulting in smaller magnitude local innovations for uncompromised measurement streams. Accordingly, decreasing the threshold $\gamma_t$ over time, we expect the update~\eqref{eqn: estimateUpdate} to limit the impact of compromised measurements while still incorporating enough information from the uncompromised measurements. The weight and threshold selection is critical to the performance and resilience of \textbf{SAFE}. If these are selected improperly, then, the performance guarantees, stated below, no longer hold. 

In the distributed field estimation problem, each agent is only interested in estimating a few components of the parameter. This differs from the more common distributed parameter estimation problem, like the setup of \textbf{SAGE} in~\cite{ChenSAGE}, where every agent is interested in estimating all components of the parameter. To account for the different interest sets between neighboring agents, \textbf{SAFE} introduces a message processing step (equations~\eqref{eqn: censorReceived} and~\eqref{eqn: censorSelf}), not found in \textbf{SAGE}~\cite{ChenSAGE}, and uses the processed messages in the estimate update. By handling different interest sets among agents,~\textbf{SAFE} is more general than~\textbf{SAGE}: if all agents have the same interest set $\mathcal{I}_n = \left\{1, \dots, M\right\}$, then, \textbf{SAFE} reduces to \textbf{SAGE}.

\subsection{Main Result: Performance of \textbf{SAFE}}
We now present our main result. 
Define $\theta^*_{\mathcal{I}_n}$ element-wise as
	$\left[\theta^*_{\mathcal{I}_n} \right]_i = \left[\theta^*\right]_{\mathcal{I}_n(i)},$
for $i = 1, \dots, \left\lvert \mathcal{I}_n \right\rvert$. That is, $\theta^*_{\mathcal{I}_n}$ is the $\left\lvert \mathcal{I}_n \right\rvert$-dimensional vector that collects the components of $\theta^*$ in which agent $n$ is interested. The following theorem establishes the strong consistency of \textbf{SAFE} on the local interest sets.

\begin{theorem}\label{thm: main}
	Let $\mathcal{A}=\left\{p_1, \dots, p_{\left\lvert \mathcal{A} \right\rvert} \right\}$ be the set of compromised measurement streams, and let $\mathcal{H}_{\mathcal{A}} = \left[\begin{array}{ccc} h_{p_1} & \cdots & h_{p_{\left\lvert \mathcal{A} \right\rvert}} \end{array} \right]^\intercal.$ If for the matrix 
	$\mathcal{G}_{\mathcal{N}} = \sum_{p \in \mathcal{N}} h_p h_p^\intercal$,
\begin{equation}\label{eqn: resilienceCondition}
	\lambda_\min \left(\mathcal{G}_{\mathcal{N}} \right) > \Delta_{\mathcal{A}},
\end{equation}
where $\mathcal{N} = \mathcal{P}\setminus \mathcal{A}$ and
	 $\Delta_{\mathcal{A}} =  \max \limits_{v \in \mathbb{R}^{\left\lvert \mathcal{A} \right\rvert}, \left\lVert v \right \rVert_\infty \leq 1} \left\lVert \mathcal{H}_{\mathcal{A}}^\intercal v \right\rVert_2,$
then, for all $n$ and all $0 \leq \tau_0 < \min\left(\tau_\gamma, \frac{1}{2} - \tau_\gamma\right)$, we have
\begin{equation}\label{eqn: localConsistency}
	\mathbb{P} \left( \lim_{t \rightarrow \infty} \left(t+1 \right)^{\tau_0} \left\lVert x_n(t) - \theta^*_{\mathcal{I}_n} \right\rVert_2 = 0 \right) = 1.
\end{equation}

\end{theorem}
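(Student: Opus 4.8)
\emph{Plan.} I would turn the \textbf{SAFE} iteration into a stochastic recursion for the stacked estimation error, split it into a ``per‑component consensus'' part (agreement, for each $m$, among the agents of $\mathcal J_m$) and a ``per‑component disagreement'' part, and treat the two with almost‑supermartingale (Robbins--Siegmund) arguments: Assumption~\ref{ass: connectivity} drives the disagreement to zero, and the resilience inequality~\eqref{eqn: resilienceCondition} drives the consensual value to $\theta^*$. Concretely, put $e_n(t)=x_n(t)-\theta^*_{\mathcal I_n}$. By Assumption~\ref{ass: interestSubset}, $H_n\theta^*=H_n^c\theta^*_{\mathcal I_n}$, so the local innovation equals $-H_n^c e_n(t)+\overline w_n(t)+\overline a_n(t)$; by the censoring rules~\eqref{eqn: censorReceived}--\eqref{eqn: censorSelf}, the neighborhood‑consensus term read off a component $m\in\mathcal I_n$ is $\sum_{l\in\Omega_n(t)\cap\mathcal J_m}\big([x_n(t)]_{\mathcal I_n^{-1}(m)}-[x_l(t)]_{\mathcal I_l^{-1}(m)}\big)$, in which the constants $[\theta^*]_m$ cancel, so it is the action of the Laplacian $L_m(t)$ of the induced subgraph $G_m(t)$ on the error vector $(e_{l,m}(t))_{l\in\mathcal J_m}$. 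Regrouping the error coordinates by the component they estimate, this yields $\mathbf e(t+1)=\mathbf e(t)-\beta_t\mathcal L(t)\mathbf e(t)-\alpha_t\Psi(t)\mathbf e(t)+\alpha_t(\eta(t)+\xi(t))$ with $\mathcal L(t)=\blkdiag(L_1(t),\dots,L_M(t))\succeq0$; $\Psi(t)$ block diagonal with PSD blocks built from the saturated uncompromised innovation gains $k_p(t)h_p^c(h_p^c)^\intercal$; $\eta(t)$ the time‑averaged‑noise term; and $\xi(t)$ the compromised‑stream term, which by~\eqref{eqn: smallK} obeys $\|\xi(t)\|_2\le c_1\gamma_t\Delta_{\mathcal A}$, since every compromised component of the scaled innovation is clipped at $\gamma_t$ and $\Delta_{\mathcal A}$ is exactly the relevant operator norm. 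A first technical point: $\Psi(t),\eta(t),\xi(t)$ depend on $w_n(t)$ through $\overline y_n(t)$, hence are $\mathcal F_{t+1}$‑ not $\mathcal F_t$‑measurable; splitting $\overline w_n(t)=\tfrac t{t+1}\overline w_n(t-1)+\tfrac1{t+1}w_n(t)$ and using that $z\mapsto z\min(1,\gamma_t/|z|)$ is $1$‑Lipschitz while $z\mapsto\min(1,\gamma_t/|z|)$ is $\gamma_t^{-1}$‑Lipschitz isolates an $\mathcal F_t$‑measurable part, a martingale‑difference part, and a summable remainder of order $\alpha_t\gamma_t^{-1}(t+1)^{-1}$.

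\emph{Step 1 --- boundedness and consensus.} Run Robbins--Siegmund on $V(t)=\|\mathbf e(t)\|_2^2$: expanding, the $\mathcal L(t)$ and $\Psi(t)$ terms are dissipative, the $\eta(t)$ cross term has conditionally zero martingale part and summable variance $O(\alpha_t^2/(t+1))$, and the $\xi(t)$ cross term, of size $O(\alpha_t\gamma_t\sqrt{V(t)})$, is dominated by the dissipative terms (this is where the resilience condition~\eqref{eqn: resilienceCondition} first enters, to out‑compete the adversarial perturbation already in the saturated regime); this gives $\sup_t V(t)<\infty$ a.s.\ and $\sum_t\beta_t\|\check{\mathbf e}(t)\|_2^2<\infty$. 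Then, because $\alpha_t/\beta_t=(a/b)(t+1)^{\tau_2-\tau_1}\to0$ (as $\tau_2<\tau_1$), the disagreement recursion $\mathbb E[\|\check{\mathbf e}(t+1)\|_2^2\mid\mathcal F_t]\le(1-c\beta_t)\|\check{\mathbf e}(t)\|_2^2+C(\alpha_t^2+\alpha_t^2\gamma_t^2+\alpha_t^2/(t+1))$ forces $\|\check{\mathbf e}(t)\|_2=O(\alpha_t\beta_t^{-1/2})=O((t+1)^{-(\tau_1-\tau_2/2)})$, which by $\tau_0<\tau_\gamma<\tau_1-\tau_2<\tau_1-\tau_2/2$ decays strictly faster than the target rate; thus the disagreement is asymptotically negligible.

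\emph{Step 2 --- convergence of the consensual value, and the rate.} With the disagreement controlled, project the recursion onto the per‑component averages $\bar{\mathbf e}(t)\in\mathbb R^M$; symmetrizing the averaging with weights $|\mathcal J_m|$, $\bar{\mathbf e}(t)$ is a perturbed stochastic approximation whose drift is governed by $\mathcal G_{\mathcal N}$ (attenuated by the $k_p(t)$) and whose bias has size $O(\gamma_t)$. The resilience condition~\eqref{eqn: resilienceCondition} makes the drift dominate the bias: once $\|\bar{\mathbf e}(t)\|_2\gtrsim\gamma_t\Delta_{\mathcal A}/\lambda_{\min}(\mathcal G_{\mathcal N})$ --- a level strictly below the clipping scale $\gamma_t$ precisely because that ratio is $<1$ --- the uncompromised innovations fall below the threshold, so $k_p(t)=1$ and the full Grammian $\mathcal G_{\mathcal N}$ acts; this, together with Robbins--Siegmund (absorbing the $O(\alpha_t^2/(t+1))$ noise and the small disagreement coupling), gives $\|\bar{\mathbf e}(t)\|_2=O(\gamma_t)$ a.s.\ and hence $\|e_n(t)\|_2=O(\gamma_t)$ for every $n$. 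To upgrade to the claimed rate, run a second Robbins--Siegmund argument on $\widetilde U_n(t)=(t+1)^{2\tau_0}\|e_n(t)\|_2^2$: the time‑weight inflates terms by $1+O(\tau_0/(t+1))$, which the dissipation absorbs since $\alpha_t(t+1)\to\infty$; the weighted compromised‑bias term is $(t+1)^{2\tau_0}\gamma_t^2=(t+1)^{2(\tau_0-\tau_\gamma)}\to0$ precisely because $\tau_0<\tau_\gamma$; and controlling the weighted contribution of the time‑averaged measurement noise (whose scale is $(t+1)^{-1/2}$) against the decaying threshold $(t+1)^{-\tau_\gamma}$ --- which is what pins down $\tau_0<\tfrac12-\tau_\gamma$ --- yields $\widetilde U_n(t)\to0$ a.s., i.e.~\eqref{eqn: localConsistency}.

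\emph{Main obstacle.} The crux is the state‑dependent, non‑$\mathcal F_t$‑measurable clipping gain $K_n(t)$ together with the heterogeneous interest‑set bookkeeping. Unlike in standard consensus+innovations analysis, $\Psi(t)$ is not lower bounded by a deterministic positive‑definite matrix --- its coercivity collapses exactly when the error (or the noise) is large --- so global observability cannot be invoked directly; one must track the ``clipping active'' and ``clipping inactive'' regimes simultaneously and show that in both the uncompromised restoring force out‑competes the $O(\gamma_t)$ adversarial perturbation, which is where $\lambda_{\min}(\mathcal G_{\mathcal N})>\Delta_{\mathcal A}$ is used. Simultaneously, every reduction --- the censoring, the averaging, and the bound $\|\xi(t)\|_2\le c_1\gamma_t\Delta_{\mathcal A}$ --- must be carried through consistently across the component‑indexed subgraphs $G_m(t)$, since those, rather than a single network Laplacian, govern the consensus dynamics here.
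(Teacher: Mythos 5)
Your overall architecture matches the paper's: split the error, for each component $m$, into disagreement over the induced subgraph $G_m(t)$ plus the generalized per-component average, kill the disagreement using Assumption~\ref{ass: connectivity} and the fact that the clipped innovation is an $O(\gamma_t)$ perturbation, and then show the average converges under~\eqref{eqn: resilienceCondition}. Your Step 1 (consensus at a rate strictly faster than $\tau_\gamma$, modulo the measurability bookkeeping you describe) is sound in spirit and close to the paper's Lemma~\ref{lem: resilientConsensus}, which also treats the saturated innovation simply as a bounded $O(\gamma_t)$ forcing and needs no resilience condition there.

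The genuine gap is in Step 2, exactly at the point you yourself flag as the ``main obstacle'' but never resolve: the regime where $\left\lVert \overline{\mathbf{e}}_t \right\rVert_2$ is \emph{above} the clipping scale $\gamma_t$. Your one concrete claim about this region is wrong as stated: if $\left\lVert \overline{\mathbf{e}}_t\right\rVert_2 \gtrsim \gamma_t \Delta_{\mathcal{A}}/\lambda_{\min}(\mathcal{G}_{\mathcal{N}})$ it does \emph{not} follow that the uncompromised innovations fall below the threshold and $k_p(t)=1$; that holds only in the band between $\gamma_t \Delta_{\mathcal{A}}/\lambda_{\min}(\mathcal{G}_{\mathcal{N}})$ and (roughly) $\gamma_t$, whereas for $\left\lVert \overline{\mathbf{e}}_t\right\rVert_2 > \gamma_t$ the gains in~\eqref{eqn: smallK} are strictly less than one and the full Grammian does not act. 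In that saturated regime the restoring drift from the uncompromised streams is itself only $O(\alpha_t\gamma_t)$ per step --- the same order as the adversarial bias $O(\alpha_t\gamma_t\Delta_{\mathcal{A}})$ --- so both your claimed a.s.\ boundedness of $V(t)$ via Robbins--Siegmund (where you assert the $\xi(t)$ cross term is ``dominated by the dissipative terms'') and the intermediate bound $\left\lVert\overline{\mathbf{e}}_t\right\rVert_2 = O(\gamma_t)$ rest on precisely the comparison you have not carried out: a quantitative argument that the clipped uncompromised restoring force exceeds $\Delta_{\mathcal{A}}\gamma_t$ with margin $\kappa = \lambda_{\min}(\mathcal{G}_{\mathcal{N}})-\Delta_{\mathcal{A}}$ even when every $k_p(t)<1$. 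This is the heart of the theorem, and it cannot be excluded that the average error hovers above $\gamma_t$ forever; the paper handles that case explicitly by lower-bounding the uncompromised gains by the common factor $\widehat{K}_{t,\omega}$ (equation~\eqref{eqn: avgConv2}), absorbing the attack into a modified positive definite matrix via the perturbation result Lemma~\ref{lem: main}, and closing the loop with the deterministic state-dependent recursion Lemma~\ref{lem: timeVaryingSystem1}, which is also where the final exponent $\tau_0 < \min(\tau_\gamma, \tfrac12-\tau_\gamma)$ actually comes from. Without a substitute for that block of analysis (Lemmas~\ref{lem: avg1}--\ref{lem: avg2}), your plan does not yet yield~\eqref{eqn: localConsistency}; the differences in machinery (supermartingale arguments versus the paper's pathwise analysis after invoking a.s.\ decay of the averaged noise) are otherwise legitimate and could be made to work once the saturated-regime comparison is supplied.
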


Theorem~\ref{thm: main} states that, under the resilience condition in~\eqref{eqn: resilienceCondition}, \textbf{SAFE} ensures that \text{all} of the agents' local estimates converge a.s.~to the true value of the parameter on their respective interest sets. The resilience condition~\eqref{eqn: resilienceCondition} is a condition on the global redundancy of the noncompromised measurement streams. Intuitively, it states that the noncompromised measurement streams have enough redundancy in measuring $\theta^*$ to overcome the impact of the compromised measurement streams. The resilience condition~\eqref{eqn: resilienceCondition} for consistent distributed \textit{field} estimation under measurement attacks is the same resilience condition for consistent distributed \textit{parameter} estimation under measurement attacks~\cite{ChenSAGE}. 
Thus, the \textit{resilience} of our distributed field estimator \textbf{SAFE} does not depend on the agents' local interest sets; it only depends on which measurement streams are compromised. 

The performance of \textbf{SAFE} also depends on the connectivity of the cyber communication graph and the physical coupling between the parameter and the agents' measurements. {\color{black} It is difficult to determine how the local neighborhood, $\Omega_n(t)$, of a single agent, $n$, directly affects the \textit{global} dynamics of the algorithm. Thus, instead of considering the local neighborhood structure $\Omega_n(t)$, we provide a necessary condition (for resilient field estimation) on the \textit{global} topology of the communication network $G(t)$ }  Specifically, following Assumption~\ref{ass: connectivity}, we require that, 
for each $m = 1, \dots, M$ (i.e., each component of the parameter $\theta^*$), the induced subgraph $G_{m} (t)$ be connected on average, {\color{black} i.e., $\lambda_2 \left( \overline{L}_m \right) > 0$ for all $m = 1, \dots, M$.} 

\section{Performance Analysis}\label{sect: analysis}
In this section, we prove Theorem~\ref{thm: main}, which states that, under \textbf{SAFE}, all of the agents' local estimates converge a.s.~to the true value of the parameter on their respective interest sets as long as the resilience condition~\eqref{eqn: resilienceCondition} is satisfied. {\color{black}Our analysis does not depend on the attacker's strategy, i.e., how the attacker chooses $\left\{a_n(t)\right\}$. We carry out our analysis over sample paths, and, in the case of random attacks, we do not make any assumptions about the distribution or statistics of $\left\{a_n(t)\right\}$. }The proof depends on several existing intermediate results from the literature~\cite{ChenDistributed1, ChenSAGE, Schur} that are provided in the Appendix. All inequalities involving random variables hold a.s.~(with probability $1$) unless otherwise stated. 

Our proof requires new techniques not found in existing work on resilient distributed parameter estimation~\cite{ChenSAGE}. In distributed parameter estimation settings, we show that the agents' estimates all converge to the network average estimate, and that the network average estimate converges to the true value of the parameter. This method relies on the fact that all agents share the same estimation goal. In distributed field estimation, each agent is only interested in and keeps track of a few components of the parameter, which complicates the notion of the network average estimate. To account for this complication, we analyze the consensus of the local estimates on component-induced subgraphs $G_m(t)$. That is, for each component of the unknown parameter $m = 1, \dots, M$, we consider the subgraph $G_m(t)$ induced by agents who are interested in estimating $[\theta^*]_m$ and show that their estimates of $[\theta^*]_m$ converge a.s.~to the network average estimate taken over $G_m(t)$. Then, we show that the network average estimate taken over $G_m(t)$ converges a.s.~to the true value of $[\theta^*]_m$. 

\subsection{Auxiliary State Transformation}
For convenience and clarity, before proving Theorem~\ref{thm: main}, we introduce an auxiliary state variable for describing the evolution of the agents' local estimates. We follow the convention provided in~\cite{SahuRandomFields}. For each agent $n$, we define the auxiliary state $\widetilde{x}_n(t) \in \mathbb{R}^M$ as follows (recall that $M$ is the dimension of the parameter $\theta^*$):
\begin{equation}\label{eqn: tildeX}
	\left[ \widetilde{x}_n(t) \right]_i = \left\{ \begin{array}{ll} \left[ x_n(t) \right]_{\mathcal{I}^{-1}_n \left( i \right)}, & \text{if } i \in \mathcal{I}_n, \\ 0, &\text{otherwise,} \end{array} \right.
\end{equation}
for each $i =1, \dots, M$. That is, for each component $i = 1, \dots, M$, if agent $n$ is interested in component $i$, then the $i^{\text{th}}$ component of $\widetilde{x}_n(t)$ is agent $n$'s estimate of $[\theta^*]_i$; otherwise, the $i^{\text{th}}$ component of $\widetilde{x}_n(t)$ is zero. Further, for each agent $n$, define the $M \times M$ diagonal matrix
\begin{equation}\label{eqn: qDef}
	Q_n = \diag \left(q^n_1, \dots, q^n_M\right),
\end{equation}
where, for $i  = 1, \dots, M$, 
\begin{equation}\label{eqn: qiDef}
	q^n_i = \left\{ \begin{array}{ll} 1, & \text{if } i \in \mathcal{I}_n, \\ 0, &\text{otherwise.} \end{array} \right.
\end{equation}

Recall that, for each agent $n$, the auxiliary interest set $\widetilde{\mathcal{I}}_n$ is the set of indices of the nonzero \textit{columns} of $H_n$, i.e., the components of the parameter that are coupled to the measurement streams at agent $n$. Assumption~\ref{ass: interestSubset} states that $\widetilde{\mathcal{I}}_n \subseteq \mathcal{I}_n$. As a result of assumption~\ref{ass: interestSubset} and the definition of $q^n_i$, the following holds for each agent $n$:
	$H_n Q_n = H_n.$
This is because $q_i^n = 1$ if the $i^{\text{th}}$ column of $H_n$ is nonzero. 

From the estimate update rule~\eqref{eqn: estimateUpdate}, the auxiliary state $\widetilde{x}_n(t)$ follows
\begin{equation}\label{eqn: auxiliaryUpdate}
\begin{split}
	\widetilde{x}_n(t+1) \! =& \widetilde{x}_n(t) - \beta_t \!\!\!\! \sum_{l \in \Omega_n(t)} \!\!\!\! Q_n Q_l \! \left( \widetilde{x}_n(t) \! - \! \widetilde{x}_l(t)\right) + \alpha_t H_n^\intercal K_n(t)\left(\overline{y}_n(t) - H_n x_n(t) \right).\!\!
\end{split}
\end{equation}
Let $\widetilde{\mathbf{x}}_t = \left[\begin{array}{ccc} \widetilde{x}_1(t)^\intercal & \cdots & \widetilde{x}_N(t)^\intercal \end{array} \right]^\intercal$ and $\overline{\mathbf{y}}_t = \left[\begin{array}{ccc} \overline{y}_1(t)^\intercal & \cdots & \overline{y}_N(t)^\intercal \end{array} \right]^\intercal$ stack, respectively, all of the agents' auxiliary states and measurements at time $t$. 
To represent the evolution of $\widetilde{\mathbf{x}}_t$, we define the $NM \times NM$ matrix $\mathbf{L}_t$ block-wise. Let $\left[\mathbf{L}_t \right]_{n, l} \in \mathbb{R}^{M \times M}$ be the $(n, l)$-th \textit{sub-block} of the matrix $\mathbf{L}_t$, for $n, l = 1, \dots, N$, defined as
\begin{equation}\label{eqn: ltDef}
	\left[\mathbf{L}_t\right]_{n, l} = \left\{ \begin{array}{ll}-Q_n \sum_{i = 1:i \neq n}^N \left[L(t)\right]_{n, i} Q_i, & \text{if } n = l, \\ \left[L(t) \right]_{n, l} Q_n Q_l, & \text{otherwise.}\end{array} \right.
\end{equation}
In~\eqref{eqn: ltDef}, $\left[L(t) \right]_{n, l} \in \mathbb{R}$ is the $(n, l)$-th \textit{element} of the Laplacian, $L(t)$, of the communication graph $G(t)$. 
From~\eqref{eqn: auxiliaryUpdate}, the stacked auxiliary states evolve as
\begin{equation}\label{eqn: stackedTildeX}
\begin{split}
	\widetilde{\mathbf{x}}_{t+1} &=\widetilde{\mathbf{x}}_t - \beta_t \mathbf{L}_t \widetilde{\mathbf{x}}_t + \alpha_t D_H^\intercal \mathbf{K}_t \left(\overline{\mathbf{y}}_t - D_H \widetilde{\mathbf{x}}_t \right),
\end{split}
\end{equation}
where
\begin{align}
	\mathbf{K}_t &= \blkdiag\left(K_1(t), \dots, K_N(t) \right), \label{eqn: matrixKDef} \\
	D_H &= \blkdiag \left(H_1, \dots, H_N \right).
\end{align}
In the sequel, we use the evolution of the auxiliary states~\eqref{eqn: auxiliaryUpdate}, to prove Theorem~\ref{thm: main}. 

\subsection{Network Convergence}
We now prove that the agents' estimates converge component-wise to the network average over the induced subgraphs $G_m(t)$, $m = 1, \dots, M$. Define the $M \times M$ matrix (where, recall, $M$ is the dimension of the parameter $\theta^*$)
\begin{equation}\label{eqn: calDDef}
	\mathcal{D} = \diag\left(\left\lvert{\mathcal{J}_1}\right\rvert^{-1}, \dots, \left\lvert{\mathcal{J}_M}\right\rvert^{-1}\right). 
\end{equation}
Then, we define the $M$-dimensional \textit{generalized network average estimate} as
\begin{equation}\label{eqn: generalizedAverage}
	\overline{\mathbf{x}}_t = \mathcal{D} \left( \mathbf{1}_N^\intercal \otimes I_M \right) \widetilde{\mathbf{x}}_t.
\end{equation}
Note that each component $m = 1, \dots, M$ of $\overline{\mathbf{x}}_t$ follows
	$\left[\overline{\mathbf{x}}_t\right]_m = \frac{1}{\left\lvert \mathcal{J}_m \right\rvert} \sum_{n = 1}^N \left[\widetilde{x}_n(t) \right]_m =  \frac{1}{\left\lvert \mathcal{J}_m \right\rvert} \sum_{n \in \mathcal{J}_m} \left[\widetilde{x}_n(t) \right]_m,$
since, if $n \notin \mathcal{J}_m$ (i.e., if agent $n$ is not interested in $m$), then, by~\eqref{eqn: tildeX}, $\left[\widetilde{x}_n(t) \right]_m = 0$. That is, each component $m$ of $\overline{\mathbf{x}}_t$ is the average estimate of all agents interested in estimating $\left[\theta^* \right]_m$. 

We are interested in analyzing
	$\widehat{x}_n(t) = \widetilde{x}_n(t) - \mathbf{\overline{x}}_t,$
the difference between each agent's auxiliary state and the generalized network average estimate. For each agent $n$, we only wish to consider the components of $\widehat{x}_n(t)$ that belong to the interest set of agent $n$. Thus, instead of analyzing $\widehat{x}_n(t)$ directly, we analyze the behavior of $Q_n \widehat{x}_n(t)$, where $Q_n$ follows~\eqref{eqn: qDef}. Let
\begin{equation}\label{xHatStack}
	\widehat{\mathbf{x}}_t = \widetilde{\mathbf{x}}_t - \left(\mathbf{1}_N \otimes I_M \right) \mathbf{\overline{x}}_t,
\end{equation}
stack $\widehat{x}_n(t)$ across all agents, and define the matrix
\begin{equation}\label{eqn: calQDef}
	\mathcal{Q} = \blkdiag\left(Q_1, \dots, Q_N \right).
\end{equation}
The following lemma describes the behavior of $\mathcal{Q} \widehat{\mathbf{x}}_t$. 
\begin{lemma}\label{lem: resilientConsensus}
	Under \textbf{SAFE}, for every $0 \leq \tau_3 < \tau_\gamma + \tau_1 - \tau_2$, $\mathcal{Q} \widehat{\mathbf{x}}_t$ satisfies
\begin{equation}\label{eqn: resilientConsensus}
	\mathbb{P}\left( \lim_{t \rightarrow \infty} \left(t+1\right)^{\tau_3} \left\lVert \mathcal{Q} \widehat{\mathbf{x}}_t \right\rVert_2 = 0 \right) = 1.
\end{equation}
\end{lemma}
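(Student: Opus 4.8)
The plan is to derive a closed recursion for $\mathbf z_t := \mathcal Q\widehat{\mathbf x}_t$ of pure stochastic-approximation type,
\[
\mathbf z_{t+1} = (I-\beta_t\mathbf L_t)\mathbf z_t + \alpha_t\mathbf U_t ,
\]
show that it stays inside a subspace on which $\overline{\mathbf L}:=\mathbb E[\mathbf L_t]$ is uniformly positive definite and on which the forcing $\mathbf U_t$ is uniformly $O(\gamma_t)$, and then feed the resulting Lyapunov inequality into the decay-rate lemma from the Appendix. The first step is bookkeeping on the matrix $\mathbf L_t$ from~\eqref{eqn: ltDef}: I would verify that (i) $\mathbf L_t$ is symmetric and, componentwise, its $(m,m)$-entries form exactly the Laplacian $L_m(t)$ of the induced subgraph $G_m(t)$ (padded with zeros for agents outside $\mathcal J_m$), so that $(\mathbf 1_N^\intercal\otimes I_M)\mathbf L_t=0$ and $\mathbf L_t(\mathbf 1_N\otimes I_M)=0$; (ii) $\mathcal Q\mathbf L_t=\mathbf L_t\mathcal Q=\mathbf L_t$, which is immediate from the $Q_nQ_l$ factors in~\eqref{eqn: ltDef}; and (iii) the innovation vector $\mathbf r_t:=D_H^\intercal\mathbf K_t(\overline{\mathbf y}_t-D_H\widetilde{\mathbf x}_t)$ has its $m$-th coordinates supported on $\mathcal J_m$, because the $m$-th column of $H_n$ vanishes unless $m\in\widetilde{\mathcal I}_n\subseteq\mathcal I_n$ (Assumption~\ref{ass: interestSubset}).

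Starting from~\eqref{eqn: stackedTildeX}, I would then subtract the generalized network average: by (i) the consensus term disappears from the dynamics of $\overline{\mathbf x}_t$, so $\widehat{\mathbf x}_{t+1}=\widehat{\mathbf x}_t-\beta_t\mathbf L_t\widehat{\mathbf x}_t+\alpha_t\mathbf P^\perp\mathbf r_t$ with $\mathbf P^\perp:=I-(\mathbf 1_N\otimes I_M)\mathcal D(\mathbf 1_N^\intercal\otimes I_M)$; multiplying by $\mathcal Q$ and using (ii)--(iii) gives the displayed recursion with $\mathbf U_t:=\mathcal Q\mathbf P^\perp\mathbf r_t$. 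One checks that $\mathbf z_0=0$, that $\mathbf U_t$ lies in the subspace $\mathcal S$ on which, for each $m$, the $m$-th coordinates over the agents of $\mathcal J_m$ sum to zero (this uses (iii)), and that $(I-\beta_t\mathbf L_t)$ maps $\mathcal S$ into $\mathcal S$; hence $\mathbf z_t\in\mathcal S$ for all $t$. On $\mathcal S$, Assumption~\ref{ass: connectivity} yields $\mathbf z^\intercal\overline{\mathbf L}\mathbf z\ge c\|\mathbf z\|_2^2$ with $c:=\min_m\lambda_2(\overline L_m)>0$, since $\mathbf z^\intercal\overline{\mathbf L}\mathbf z$ splits as a sum over $m$ of quadratic forms of $\overline L_m$ restricted to vectors orthogonal to $\mathbf 1$. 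Independently of the attack sequence $\{a_n(t)\}$, the saturation gain forces $\|\mathbf K_t(\overline{\mathbf y}_t-D_H\widetilde{\mathbf x}_t)\|_\infty\le\gamma_t$, which with Assumption~\ref{ass: normalization} and finiteness of the network gives $\|\mathbf U_t\|_2\le C_0\gamma_t$; also $\|I-\beta_t\mathbf L_t\|_2\le C_1$ and $\|\mathbb E[\mathbf L_t^2]\|_2\le C_2$ since the graphs are finite.

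Expanding $\|\mathbf z_{t+1}\|_2^2$ and taking $\mathbb E[\cdot\mid\mathcal F_t]$ --- using $\mathbb E[\mathbf L_t\mid\mathcal F_t]=\overline{\mathbf L}$ and $\mathbf z_t\in\mathcal S$ for the quadratic term, and the pathwise bounds above together with Cauchy--Schwarz and Young's inequality for the cross term --- I obtain, for all $t$ large enough that $C_2\beta_t^2\le\tfrac{c}{2}\beta_t$,
\[
\mathbb E\!\left[\|\mathbf z_{t+1}\|_2^2\mid\mathcal F_t\right]\ \le\ \Bigl(1-\tfrac{c}{2}\beta_t\Bigr)\|\mathbf z_t\|_2^2 + C_3\,\frac{\alpha_t^2\gamma_t^2}{\beta_t}\,.
\]
With the chosen sequences the damping is $\tfrac{c}{2}\beta_t=\Theta((t+1)^{-\tau_2})$, $\tau_2\in(0,1)$, and the forcing is $\Theta((t+1)^{-(2\tau_1+2\tau_\gamma-\tau_2)})$, whose exponent exceeds $\tau_2$ because $\tau_1>\tau_2$ and $\tau_\gamma>0$. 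The decay-rate lemma from the Appendix (\cite{ChenSAGE,ChenDistributed1}) then gives $(t+1)^{\delta_0}\|\mathbf z_t\|_2^2\to0$ a.s. for every $0\le\delta_0<2(\tau_1+\tau_\gamma-\tau_2)$; taking $\delta_0=2\tau_3$ and a square root yields~\eqref{eqn: resilientConsensus} for every $0\le\tau_3<\tau_\gamma+\tau_1-\tau_2$.

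The hard part is the first two steps: disentangling the censoring matrices $Q_n$, the coupling assumption $\widetilde{\mathcal I}_n\subseteq\mathcal I_n$, and the definition~\eqref{eqn: generalizedAverage} of $\overline{\mathbf x}_t$ well enough to see that $\mathbf L_t$ decouples across components into the subgraph Laplacians $L_m(t)$ and that $\mathcal Q\widehat{\mathbf x}_t$ never leaves the subspace $\mathcal S$ where Assumption~\ref{ass: connectivity} makes $\overline{\mathbf L}$ uniformly positive definite --- this is precisely the new difficulty relative to~\cite{ChenSAGE}, where all agents share one interest set and one network-average estimate. Given that geometry, the uniform innovation bound $\|\mathbf U_t\|_2\le C_0\gamma_t$, which is exactly where the saturation gain makes the estimate insensitive to $\{a_n(t)\}$, and the final invocation of the decay-rate lemma are routine.
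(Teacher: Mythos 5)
Your structural work is sound and, in substance, parallels the paper: the block matrix $\mathbf{L}_t$ does decouple component-wise into the induced-subgraph Laplacians $L_m(t)$, the identities $(\mathbf 1_N^\intercal\otimes I_M)\mathbf L_t=0$, $\mathbf L_t(\mathbf 1_N\otimes I_M)=0$, $\mathcal Q\mathbf L_t=\mathbf L_t\mathcal Q=\mathbf L_t$ hold, the forcing is supported on the interested agents by Assumption~\ref{ass: interestSubset}, and the saturation gives $\lVert\mathbf U_t\rVert_2\le C_0\gamma_t$ regardless of the attack. The paper does the same bookkeeping by projecting with $\mathcal Q_m$ per component $m$, obtaining the recursion driven by $L_m(t)$ with forcing bounded by $C_1\gamma_t$, and then invoking the appendix consensus lemma (Lemma~\ref{lem: consensus}) for each $m$ and summing; your exponent arithmetic ($\tau_3<\tau_\gamma+\tau_1-\tau_2$) matches.

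The genuine gap is your final step. You pass to the conditional second moment, $\mathbb E[\lVert\mathbf z_{t+1}\rVert_2^2\mid\mathcal F_t]\le(1-\tfrac{c}{2}\beta_t)\lVert\mathbf z_t\rVert_2^2+C_3\alpha_t^2\gamma_t^2/\beta_t$, and then cite ``the decay-rate lemma from the Appendix'' to conclude an almost-sure polynomial rate. No appendix lemma has that form: Lemmas~\ref{lem: consensus}, \ref{lem: randomLaplacians}, \ref{lem: randomDynamicalSystem} and~\ref{lem: timeVaryingSystem1} all treat \emph{pathwise} recursions (at most with a random contraction coefficient whose conditional mean is bounded below); none converts a recursion on conditional moments into a pathwise rate. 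This matters because $I-\beta_t\mathbf L_t$ is not a pathwise contraction on your subspace $\mathcal S$ (a single realization $G(t)$, hence $G_m(t)$, may be disconnected, so $\lambda_2(L_m(t))=0$), which is exactly why the paper's route goes through the random-contraction device of Lemma~\ref{lem: randomLaplacians} followed by Lemma~\ref{lem: randomDynamicalSystem}, rather than a Lyapunov argument in expectation. Nor can the moment recursion be closed routinely: applying Robbins--Siegmund to $(t+1)^{2\tau_3}\lVert\mathbf z_t\rVert_2^2$ requires summability of the weighted forcing $(t+1)^{2\tau_3-(2\tau_1+2\tau_\gamma-\tau_2)}$, which fails for $\tau_3$ near the claimed threshold $\tau_\gamma+\tau_1-\tau_2$ (the exponent gap tends to $\tau_2<1$), so the full range of $\tau_3$ in the lemma does not follow; the moment bound alone only yields $\mathbb E\lVert\mathbf z_t\rVert_2^2\to0$ at the right rate, not the a.s.\ statement~\eqref{eqn: resilientConsensus}. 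The fix is to stay pathwise: either use your own algebra to write the per-component recursion $\widehat{\mathbf x}^m_{t+1}=(I-P_{\lvert\mathcal J_m\rvert,1}-\beta_tL_m(t))\widehat{\mathbf x}^m_t+\alpha_t(\cdots)$ and invoke Lemma~\ref{lem: consensus} for each $m$ (the paper's proof), or prove an extension of Lemma~\ref{lem: randomLaplacians} for $\mathbf L_t$ restricted to $\mathcal S\cap\mathrm{range}(\mathcal Q)$ and then apply Lemma~\ref{lem: randomDynamicalSystem} to $\lVert\mathbf z_t\rVert_2$ --- which, after the component-wise decoupling you already established, amounts to the same thing.
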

To prove Lemma~\ref{lem: resilientConsensus}, we separately consider the network of agents interested in each component of $\theta^*$. We show that, for each component of $\theta^*$, all interested agents reach consensus, i.e., their estimates converge to the same value.

\begin{proof}
	Recall that the set $\mathcal{J}_m$, $m = 1, \dots, M$, is the subset of all agents interested in estimating $\left[ \theta^* \right]_m$. Let $ \mathcal{J}_m = \left\{ n_{m, 1}, \dots, n_{m, \left\lvert \mathcal{J}_m \right\rvert} \right\}.$
For each agent $n \in \mathcal{J}_m$, consider the canonical basis (row) vector (of $\mathbb{R}^{NM}$), $e^\intercal_{(n-1)M + m}$. This canonical vector selects the element of the auxiliary state $\widetilde{\mathbf{x}}_t$ corresponding to agent $n$'s estimate of $\left[\theta^* \right]_m$, i.e.,
	$e^\intercal_{(n-1)M + m} \widetilde{\mathbf{x}}_t = \left[ \widetilde{x}_n(t) \right]_m.$
Now, construct the matrix $\mathcal{Q}_m$ by stacking the canonical row vectors $e^\intercal_{(n-1)M + m}$ for all $n \in \mathcal{J}_m$, i.e.,
\begin{equation}\label{eqn: qmDef}
	\mathcal{Q}_m = \left[\begin{array}{ccc}e_{\left(n_{m, 1} - 1 \right)M + m} & \cdots & e_{\left( n_{m, \left\lvert \mathcal{J}_m \right\rvert} - 1 \right)M + m} \end{array} \right]^\intercal. 
\end{equation}
The matrix $\mathcal{Q}_m$ selects all of the interested agents' (i.e., all agents $n \in \mathcal{J}_m$) estimates of $\left[ \theta^* \right]_m$ from the auxiliary state $\widetilde{\mathbf{x}}_t$. For each component $m = 1, \dots, M$, define the $\left\lvert \mathcal{J}_m \right\rvert$-dimensional vector
	$\widehat{\mathbf{x}}_t^m = \mathcal{Q}_m \widehat{\mathbf{x}}_t,$
which collects, from all interested agents $n \in \mathcal{J}_m$, the difference between each agent's estimate of $\left[\theta^*\right]_m$ and the average estimate of all interested agents.

From the dynamics~\eqref{eqn: stackedTildeX} of the auxiliary state $\widetilde{\mathbf{x}}_t$, $\widehat{\mathbf{x}}_t^m$ evolves according to
\begin{equation}\label{eqn: hatXm}
\begin{split}
	&\widehat{\mathbf{x}}_{t+1}^m = \left(I_{\left\lvert \mathcal{J}_M \right\rvert} - P_{\left\lvert \mathcal{J}_M \right\rvert, 1} - \beta_t L_m(t)\right) \widehat{\mathbf{x}}_t^m + \\
	&\: \alpha_t \mathcal{Q}_m \left(I_{NM} - \left(\mathbf{1}_N \mathbf{1}_N^\intercal \right) \otimes \mathcal{D} \right)D^\intercal_H \mathbf{K}_t \left(\overline{\mathbf{y}}_t - D_H \widetilde{\mathbf{x}}_t \right),
\end{split}
\end{equation}
where, $L_m(t)$ is the graph Laplacian of $G_m(t)$, the subgraph induced by $\mathcal{J}_m$, and, following the definition in Lemma~\ref{lem: consensus} in the appendix,
	$P_{\left\lvert \mathcal{J}_m \right\rvert, 1} = \frac{1}{\left\lvert \mathcal{J}_m \right\rvert} \mathbf{1}_{\left\lvert \mathcal{J}_m \right\rvert} \mathbf{1}^\intercal_{\left\lvert \mathcal{J}_m \right\rvert}.$
By definition~\eqref{eqn: matrixKDef} of $\mathbf{K}_t$ and the threshold $\gamma_t$, 
we have
	$\left\lVert \mathbf{K}_t \left(\overline{\mathbf{y}}_t  - D_h \widetilde{\mathbf{x}}_t \right) \right\rVert_\infty \leq \gamma_t,$
which means that, for some finite constant, $C_1 > 0$, we have
\begin{equation}
\begin{split}
&C_1 \gamma_t \geq  \left\lVert \mathcal{Q}_m \left(I_{NM} - \left(\mathbf{1}_N \mathbf{1}_N^\intercal \right) \otimes \mathcal{D} \right)D^\intercal_H \mathbf{K}_t \left(\overline{\mathbf{y}}_t - D_H \widetilde{\mathbf{x}}_t \right) \right\rVert_2.
\end{split}
\end{equation}
As a consequence of Assumption~\ref{ass: iidGraphs}, the Laplacian matrices $\left\{ L_m(t) \right\}$ form an i.i.d. sequence, and, by Assumption~\ref{ass: connectivity}, we have $\lambda_2 \left( \mathbb{E} \left[ L_m (t) \right] \right) > 0$. The evolution~\eqref{eqn: hatXm} of $\widehat{\mathbf{x}}_t^m$ falls under the purview of Lemma~\ref{lem: consensus} in the appendix, and we have
\begin{equation}\label{eqn: resConsensus1}
	\mathbb{P} \left( \lim_{t \rightarrow \infty} \left(t+1\right)^{\tau_3} \left\lVert \widehat{\mathbf{x}}_t^m \right\rVert_2 = 0 \right) = 1,
\end{equation}
for every $0 \leq \tau_3 < \tau_\gamma + \tau_1 - \tau_2$.

To proceed, let $\overline{\mathcal{Q}}$ be the matrix $\mathcal{Q}$ with all zero \textit{rows} removed. Then, by definition, we have $\left\lVert \overline{\mathcal{Q}} \widehat{\mathbf{x}}_t \right\rVert_2 = \left\lVert \mathcal{Q} \widehat{\mathbf{x}}_t \right\rVert_2$. Note that vector $\overline{\mathcal{Q}} \widehat{\mathbf{x}}_t$ is a permutation of the vector
$\left[\begin{array}{ccc} {\widehat{\mathbf{x}}_t^{1\intercal}} & \cdots & {\widehat{\mathbf{x}}_t^{M\intercal}}\end{array} \right]^\intercal,$ which means that
	$\left\lVert \mathcal{Q} \widehat{\mathbf{x}}_t \right\rVert_2 = \left\lVert \overline{\mathcal{Q}} \widehat{\mathbf{x}}_t \right\rVert_2 \leq \sum_{m = 1}^M \left\lVert \widehat{\mathbf{x}}_t^m \right\rVert_2.$
This holds for every component $m = 1, \dots, M$, so we have
\begin{equation}\label{eqn: resConsensus3}
	\mathbb{P} \left( \lim_{t \rightarrow \infty} \left(t+1\right)^{\tau_3} \sum_{m = 1}^M \left\lVert \widehat{\mathbf{x}}_t^m \right\rVert_2 = 0 \right) = 1,
\end{equation}
for every $0 \leq \tau_3 < \tau_\gamma + \tau_1 - \tau_2$. Combining~\eqref{eqn: resConsensus3} with $\left\lVert \mathcal{Q} \widehat{\mathbf{x}}_t \right\rVert_2 \leq  \sum_{m = 1}^M \left\lVert \widehat{\mathbf{x}}_t^m \right\rVert_2$ yields~\eqref{eqn: resilientConsensus} and completes the proof.
\end{proof}

\subsection{Intermediate Result for Attack Modeling}
We present an intermediate result, which will be used to characterize the effect of the attack on $\overline{\mathbf{x}}_t$.
\begin{lemma}\label{lem: main}
 Let $A_1 \succ 0$ ($A_1 \in \mathbb{R}^{k \times k}$) be a symmetric, positive definite matrix with minimum eigenvalue $\lambda_{\min} \left(A_1 \right)$. For any $x \neq 0$, $x \in\mathbb{R}^k$, and $y \in \mathbb{R}^k$ that satisfy
$\left\lVert y \right\rVert_2 < \lambda_{\min} \left(A_1 \right) \left\lVert x \right\rVert_2,$
there exists a symmetric, positive definite matrix $A_2$ that satisfies
\begin{equation}\label{eqn: mainlem1}
	A_2 x = A_1 x + y.
\end{equation}
Moreover, the minimum eigenvalue of $A_2$ satisfies
\begin{equation}\label{eqn: mainlem2}
	\lambda_{\min} \left( A_2 \right) \geq \lambda_{\min} \left(A_1\right) - {\left\lVert {y} \right\rVert_2}{\left \lVert {x} \right\rVert^{-1}_2}.
\end{equation}
\end{lemma}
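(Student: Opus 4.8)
The plan is to construct $A_2$ explicitly as a (at most) rank-one perturbation of a scaled identity. Set $\mu := \lambda_{\min}(A_1) - \left\lVert y \right\rVert_2 \left\lVert x \right\rVert_2^{-1}$; the hypothesis $\left\lVert y \right\rVert_2 < \lambda_{\min}(A_1)\left\lVert x \right\rVert_2$ together with $A_1 \succ 0$ guarantees $\mu > 0$. Write $z := A_1 x + y$ (the image we want $A_2$ to produce from $x$) and $w := z - \mu x$. I would look for $A_2$ of the form $A_2 = \mu I + C$ with $C \succeq 0$ and $Cx = w$; then $A_2$ is automatically symmetric, $A_2 \succeq \mu I \succ 0$ so that \eqref{eqn: mainlem2} holds, and $A_2 x = \mu x + w = z = A_1 x + y$, which is \eqref{eqn: mainlem1}.

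First I would verify the compatibility condition $x^\intercal w \ge 0$, which is necessary for a PSD matrix $C$ with $Cx = w$ to exist (since $x^\intercal C x \ge 0$). Indeed $x^\intercal w = x^\intercal A_1 x + x^\intercal y - \mu \left\lVert x \right\rVert_2^2$, and the Rayleigh-quotient bound $x^\intercal A_1 x \ge \lambda_{\min}(A_1)\left\lVert x \right\rVert_2^2$ combined with Cauchy--Schwarz $x^\intercal y \ge -\left\lVert x \right\rVert_2\left\lVert y \right\rVert_2$ gives $x^\intercal w \ge \bigl(\lambda_{\min}(A_1) - \left\lVert y \right\rVert_2\left\lVert x \right\rVert_2^{-1} - \mu\bigr)\left\lVert x \right\rVert_2^2 = 0$ by the definition of $\mu$.

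The only delicate point is the degenerate case $x^\intercal w = 0$. If this happens, both inequalities in the previous step must be equalities: $x^\intercal A_1 x = \lambda_{\min}(A_1)\left\lVert x \right\rVert_2^2$, which (for symmetric $A_1$) forces $x$ to be an eigenvector of $A_1$ with eigenvalue $\lambda_{\min}(A_1)$, and $x^\intercal y = -\left\lVert x \right\rVert_2\left\lVert y \right\rVert_2$, which forces $y = -\left\lVert y \right\rVert_2\left\lVert x \right\rVert_2^{-1}\, x$. Substituting both into $w = A_1 x + y - \mu x$ yields $w = \bigl(\lambda_{\min}(A_1) - \left\lVert y \right\rVert_2\left\lVert x \right\rVert_2^{-1} - \mu\bigr)x = 0$. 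Hence $x^\intercal w = 0$ implies $w = 0$; equivalently, whenever $w \neq 0$ we have $x^\intercal w > 0$.

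Finally I would exhibit $C$. If $w = 0$, take $C = 0$, so $A_2 = \mu I$. If $w \neq 0$, take the rank-one matrix $C = (x^\intercal w)^{-1} w w^\intercal$, which is well defined and PSD because $x^\intercal w > 0$, and which satisfies $Cx = (x^\intercal w)^{-1} w (w^\intercal x) = w$. In either case $A_2 = \mu I + C$ has all the required properties, and $\lambda_{\min}(A_2) \ge \mu = \lambda_{\min}(A_1) - \left\lVert y \right\rVert_2\left\lVert x \right\rVert_2^{-1}$. I expect the edge case $x^\intercal w = 0$ to be the only nontrivial step; everything else is a one-line verification. An alternative avoiding the case split would be to apply the strict-inequality argument to $A_1 + \varepsilon I$ and let $\varepsilon \downarrow 0$, but the direct construction above is cleaner and yields the eigenvalue bound without a limiting argument.
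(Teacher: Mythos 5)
Your proof is correct, and it takes a genuinely different route from the paper's. The paper reduces to the case $x = \left\lVert x \right\rVert_2 e_1$ by an orthogonal change of basis, builds a bordered symmetric matrix $\widehat{A}_3$ with $\widehat{A}_3 \widehat{x} = \widehat{y}$ (a free parameter $\nu$ on the trailing diagonal block), verifies $\widehat{A}_3 + \delta I_k \succeq 0$ for $\nu \geq \delta$ via a Schur-complement criterion, and sets $A_2 = A_1 + V^\intercal \widehat{A}_3 V$, so that $A_2 = \left(A_1 - \delta I_k\right) + \left(V^\intercal\widehat{A}_3 V + \delta I_k\right)$ gives both positive definiteness and the eigenvalue bound; it also treats $k=1$ separately. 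You instead discard $A_1$ except for the scalar $\mu = \lambda_{\min}(A_1) - \left\lVert y \right\rVert_2\left\lVert x \right\rVert_2^{-1}$ and write $A_2 = \mu I + C$ with $C$ a PSD rank-one correction $(x^\intercal w)^{-1} w w^\intercal$, $w = A_1 x + y - \mu x$, so the bound $\lambda_{\min}(A_2) \geq \mu$ is immediate from $A_2 \succeq \mu I$ and no change of basis, no external Schur-complement lemma, and no dimension split are needed. The one delicate point in your argument, the degenerate case $x^\intercal w = 0$, is handled correctly: equality in the Rayleigh bound forces $x$ to be a $\lambda_{\min}(A_1)$-eigenvector and equality in Cauchy--Schwarz forces $y = -\left\lVert y \right\rVert_2 \left\lVert x \right\rVert_2^{-1} x$, whence $w = 0$ and $A_2 = \mu I$ works. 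The trade-off is mainly aesthetic: the paper's $A_2 = A_1 + A_3$ keeps $A_1$ intact inside $A_2$ (the perturbation $A_3$ satisfies $A_3 x = y$ exactly), whereas your $A_2$ retains only the spectral lower bound of $A_1$; since the paper uses nothing beyond the two stated conclusions of the lemma, your more elementary construction loses nothing for the application.
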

Lemma~\ref{lem: main} studies the effect of perturbations on matrix-vector multiplication $A_1 x$ when the matrix $A_1$ is postive definite. It states that, as long as the perturbation $y$ is not too strong, then, the result of the \textit{perturbed} multiplication ($A_1 x + y$), is equal to the \textit{unperturbed} multiplication $A_2 x$ between the \textit{same} vector $x$ and another postive definite matrix $A_2$. We use Lemma~\ref{lem: main} to study the effect of disturbances (from attacks) on the evolution of linear dynamical systems whose dynamics are modeled by positive definite matrices.

\begin{proof}
We separately consider the cases where $k = 1$ and $k > 1$. If $k = 1$, (i.e., $A_1$, $x$, and $y$ are all scalars), then, setting $A_2 = A_1 + \frac{y}{x}$ immediately satisfies~\eqref{eqn: mainlem1} and~\eqref{eqn: mainlem2}. For scalars $A_1$ and $A_2$, $\lambda_\min\left(A_1 \right) = A_1$ and $\lambda_{\min} \left(A_2 \right) = A_2$. Since, by definition, $\frac{\left\lvert y \right\rvert}{\left\lvert x \right\rvert} < A_1$, we have $A_2 > 0$, i.e., $A_2$ is positive definite. 

For $k > 1$,  let $\delta = {\left\lVert y \right\rVert_2}{\left\lVert x \right \rVert^{-1}_2}$, and let $\widehat{x} = \left\lVert x \right\rVert_2 e_1$, where $e_1$ is the first canonical basis vector of $\mathbb{R}^k$. For any $x \neq 0$, there exists an orthogonal matrix $V$ such that $\widehat{x} = Vx$. Now, let $\widehat{y} = V y$. 
We now show the existence of a symmetric positive definite $\widehat{A}_2$ 
that satisfies
\begin{equation}\label{eqn: mainlem3}
	\widehat{A}_2 \widehat{x} = \widehat{A}_1 \widehat{x} + \widehat{y},
\end{equation}
where $\widehat{A}_1 = V A_1 V^\intercal$. Note that, since $V$ is an orthogonal matrix, $A_1$ and $\widehat{A}_1 = V A_1 V^\intercal$ have the same eigenvalues. 

We find a symmetric matrix $\widehat{A}_3$ that satisfies $\widehat{A}_3 \widehat{x} = \widehat{y}$. Partition the vector $\widehat{y}$ as
	$\widehat{y} = \left[ \begin{array}{c} \widehat{y}_1 \\ \widehat{y}_2 \end{array} \right],$
where $\widehat{y}_1 \in \mathbb{R}$ is the first element of $\widehat{y}$ and $\widehat{y}_2 \in \mathbb{R}^{k-1}$ is the vector of the remaining elements. Define the matrix $\widehat{A}_3$ as
\begin{equation}\label{eqn: mainlem5}
	\widehat{A}_3 = \left[\begin{array}{cc} \widehat{y}_1 \left\lVert x \right\rVert_2^{-1} &  \widehat{y}_2^\intercal \left\lVert x \right\rVert_2^{-1} \\  \widehat{y}_2 \left\lVert x \right\rVert_2^{-1} & \nu I_{k-1} \end{array} \right],
\end{equation}
where $\nu > 0$ and $I_{k-1}$ is the $k-1$ by $k-1$ identity matrix. Note that $\widehat{A}_3$ is a symmetric matrix that satisfies $\widehat{A}_3 \widehat{x} = \widehat{y}$. Setting $\widehat{A}_2 = \widehat{A}_1 + \widehat{A}_3$ satisfies~\eqref{eqn: mainlem3}. What remains is to show that $\widehat{A}_1 + \widehat{A}_3$ is positive definite and satisfies $\lambda_{\min}\left({\widehat{A}_1 + \widehat{A}_3 }\right) \geq \lambda_{\min} \left(\widehat{A}_1 \right) - \delta$. 

We express $\widehat{A}_2$ as
	$\widehat{A}_2 = \left(\widehat{A}_1 - \delta I_k \right) + \left(\widehat{A}_3 + \delta I_k \right).$
Since $\widehat{A}_1$ is a symmetric, positive definite matrix, it is diagonalizable over the reals: there exists an orthogonal matrix $\widehat{U}_1$ and a diagonal matrix $\Lambda_1$ such that
	$\widehat{A}_1 = \widehat{U}_1 \Lambda_1 \widehat{U}_1^\intercal,$
which means that
	$\widehat{A}_1 - \delta I_k = \widehat{U}_1 \left( \Lambda_1 - \delta I_k \right) \widehat{U}_1^\intercal,$
and
\begin{equation}\label{eqn: mainlem8}
	\lambda_{\min} \left( \widehat{A}_1 - \delta I_k \right) = \lambda_{\min} \left( \widehat{A}_1 \right) - \delta. 
\end{equation}

To proceed, we find $\nu > 0$ (from the definition of $\widehat{A}_3$~\eqref{eqn: mainlem5}) such that $\widehat{A}_3 + \delta I_k$ is positive semidefinite. From~\eqref{eqn: mainlem5}, we have
\begin{equation}\label{eqn: mainlem9}
	\widehat{A}_3 + \delta I_k =  \left[\begin{array}{cc} \widehat{y}_1 \left\lVert x \right\rVert_2^{-1} + \delta &  \widehat{y}_2^\intercal \left\lVert x \right\rVert_2^{-1} \\  \widehat{y}_2 \left\lVert x \right\rVert_2^{-1} & (\nu + \delta) I_{k-1} \end{array} \right].
\end{equation}
Applying Proposition 16.1 from~\cite{Schur} (Lemma~\ref{lem: schur} in the appendix), we have that $\widehat{A}_3 + \delta I_k \succeq 0$ if and only if
\begin{equation}\label{eqn: mainlem10}
	\widehat{y}_1 \left\lVert x \right\rVert_2^{-1} + \delta - \left\lVert \widehat{y}_2 \right\rVert_2^2 \left\lVert x \right\rVert_2^{-2} \left( \nu + \delta \right)^{-1} \geq 0.
\end{equation}
Let $b = \widehat{y}_1{\left\lVert x \right\rVert_2^{-1}}$. Since $\left \lVert \widehat{y} \right\rVert_2 = \left\lVert V y \right\rVert_2 = \left \lVert y \right\rVert_2$, 
we have
$\left\lvert b \right \rvert = {\left \lvert \widehat{y}_1 \right\rvert}{ \left\lVert x \right\rVert^{-1}_2} \leq {\left\lVert \widehat{y} \right \rVert_2}{\left\lVert x \right\rVert^{-1}_2} \! = \! \delta.$
We express $\left\lVert \widehat{y}_2 \right\rVert^2_2\left\lVert x \right\rVert^{-2}_2$ as
\begin{equation} \label{eqn: mainlem12}
	 {\left\lVert \widehat{y}_2 \right\rVert^{-2}_2}{\left\lVert x \right\rVert^2_2} = {\left(\left\lVert \widehat{y} \right\rVert^2_2 - \widehat{y}_1^2 \right)} { \left\lVert x \right\rVert_2^{-2}} = \delta^2 - b^2.
\end{equation}
Using~\eqref{eqn: mainlem12}, and performing algebraic manipulations,~\eqref{eqn: mainlem10} becomes
\begin{equation}\label{eqn: mainlem13}
	b^2 + \left(\delta + \nu\right) b + \delta \nu \geq 0.
\end{equation}
The roots of the expression on the left hand side of~\eqref{eqn: mainlem13} are $b = -\nu$ and $b = -\delta$. That is, as long as $\nu \geq \delta$, then, $b^2 + \left(\delta + \nu\right) b + \delta \nu \geq 0$ for all $-\delta \leq b \leq \delta$, which means that $\widehat{A}_3 + \delta I_k \succeq 0$ as long as $\nu \geq \delta$.  

The sum of a positive definite and a positive semidefinite matrix is positive definite. Since $\widehat{A}_3 + \delta I_k \succeq 0$ and $\widehat{A}_1 - \delta I_k \succ 0$, we have 
\begin{equation}\label{eqn: mainlem14}
	\widehat{A}_2 = \left(\widehat{A}_1 - \delta I_k \right) + \left(\widehat{A}_3 + \delta I_k \right) \succ 0. 
\end{equation}
Moreover, since  $\widehat{A}_3 + \delta I_k \succeq 0$, we have
\begin{equation}\label{eqn: mainlem15}
	\lambda_{\min} \left( \widehat{A}_2 \right) \geq \lambda_{\min} \left( \widehat{A}_ 1 - \delta I_k \right) =\lambda_{\min} \left( \widehat{A}_1 \right) - \delta. 
\end{equation}
Setting $A_2 = V^\intercal \widehat{A}_2 V$ yields~\eqref{eqn: mainlem1} and~\eqref{eqn: mainlem2}, which completes the proof.
\end{proof}

\subsection{Generalized Network Average Behavior}
This subsection analyzes the evolution of the generalized network average estimate $\overline{\mathbf{x}}_t$. In particular, we show that $\overline{\mathbf{x}}_t$ converges a.s. to $\theta^*$ under the resilience condition~\eqref{eqn: resilienceCondition}. Let
\begin{equation}\label{eqn: avgError}
	\overline{\mathbf{e}}_t = \overline{\mathbf{x}}_t - \theta^*
\end{equation}
be the generalized network average estimate error. To analyze the behavior of $\overline{\mathbf{e}}_t$, we require the following definitions:
\begin{align}
	\color{black} \widetilde{k}_p(t) & \color{black} = \left\{ \begin{array}{ll} k_p(t), & p \in \mathcal{N}, \\ 0, & p \in \mathcal{A}, \end{array} \right.\\
	\mathbf{K}^{\mathcal{N}}_t &= \diag \left(\widetilde{k}_1(t), \dots, \widetilde{k}_p(t) \right), \\
	\mathbf{K}^{\mathcal{A}}_t &= \mathbf{K}_t - \mathbf{K}^{\mathcal{N}}_t, \label{eqn: ka} \\
	\overline{\mathbf{w}}_t &= \left[ \begin{array}{ccc} \overline{w}_1(t)^\intercal & \cdots & \overline{w}_N(t)^\intercal \end{array} \right]^\intercal. 
\end{align}

The following lemma characterizes the relationship between the generalized network average estimate and the threshold $\gamma_t$. In our previous work~\cite{ChenSAGE}, we established a similar result for resilient distributed \textit{parameter} estimation (Lemma 2 in~\cite{ChenSAGE}). Here, we consider the resilient distributed field estimation problem.
\begin{lemma}\label{lem: avg1}
	Define the auxiliary threshold
\begin{equation}\label{eqn: lineGammaDef}
	\color{black} \overline{\gamma}_t = \gamma_t - \frac{X}{(t+1)^{\tau_3}} - \frac{W}{(t+1)^{\frac{1}{2} - \epsilon_W}},
\end{equation}
where{\color{black}, recall, from the weight selection procedure in Section~\ref{sect: algorithmDescription}, $\gamma_t = \frac{\Gamma}{(t+1)^{\tau_{\gamma}}}$ ($\Gamma >0$ is an arbitrary positive constant)}, $\tau_3 = \tau_\gamma + \tau_1 - \tau_2 - \epsilon_X$ for arbitrarily small $0 < \epsilon_X < \tau_1 - \tau_2, \: 0 < \epsilon_W < \frac{1}{2} - \tau_\gamma.$
As long as $\lambda_{\min} \left( \mathcal{G}_{\mathcal{N}} \right) > \Delta_{\mathcal{A}}$ (resilience condition~\eqref{eqn: resilienceCondition}), then there exists a.s. $T_0 \geq 0$, $0 < X < \infty$, and $0 < W < \infty$ such that
\begin{enumerate}
	\item $\left\lVert \mathcal{Q}\widehat{\mathbf{x}}_t \right\rVert_2 \leq \frac{X}{(t+1)^{\tau_3}}$ a.s.,
	\item $\left\lVert \overline{\mathbf{w}}_t \right\rVert_2 \leq \frac{W}{(t+1)^{\frac{1}{2} - \tau_\gamma}}$, a.s., and,
	\item if, for any $T \geq T_0$, $\left\lVert \overline{\mathbf{e}}_T \right \rVert_2 \leq \overline{\gamma}_T$, then, for all $t \geq T$, $\left\lVert \overline{\mathbf{e}}_t \right\rVert_2 \leq \overline{\gamma}_t$ a.s.
\end{enumerate}
\end{lemma}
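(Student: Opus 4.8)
The plan is to establish the three claims more or less independently, with the third being the substantive one. Claims 1 and 2 are essentially restatements of facts already available: Claim 1 follows directly from Lemma~\ref{lem: resilientConsensus} with $\tau_3 = \tau_\gamma + \tau_1 - \tau_2 - \epsilon_X < \tau_\gamma + \tau_1 - \tau_2$, since a.s.~convergence of $(t+1)^{\tau_3}\lVert \mathcal{Q}\widehat{\mathbf{x}}_t\rVert_2$ to $0$ implies the sequence is a.s.~bounded, so we may take $X$ to be (a constant times) its almost-sure supremum. Claim 2 is the standard rate for time-averaged i.i.d.~noise: each $\overline{w}_n(t) = \frac{1}{t+1}\sum_{j=0}^t w_n(j)$, and by a maximal inequality / law of the iterated logarithm type bound (cf.~the noise lemmas cited in the appendix from~\cite{ChenDistributed1,ChenSAGE}), $(t+1)^{1/2-\epsilon}\lVert \overline{\mathbf{w}}_t\rVert_2 \to 0$ a.s.~for any $\epsilon>0$; picking $\epsilon = \tau_\gamma$ (valid since $\tau_\gamma < 1/2$) and taking $W$ to be the a.s.~supremum gives the claim. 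One must be slightly careful that $T_0, X, W$ can be chosen on a common probability-one event, which is immediate since a finite intersection of probability-one events has probability one.

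The core is Claim 3, an induction-on-$t$ (invariance) argument. First I would write down the exact dynamics of $\overline{\mathbf{e}}_t$: applying $\mathcal{D}(\mathbf{1}_N^\intercal \otimes I_M)$ to the stacked recursion~\eqref{eqn: stackedTildeX}, the consensus term $\mathbf{L}_t\widetilde{\mathbf{x}}_t$ is annihilated (its rows in each $m$-block sum to zero over $\mathcal{J}_m$), leaving
\begin{equation*}
\overline{\mathbf{e}}_{t+1} = \overline{\mathbf{e}}_t + \alpha_t \mathcal{D}(\mathbf{1}_N^\intercal \otimes I_M) D_H^\intercal \mathbf{K}_t\left(\overline{\mathbf{y}}_t - D_H\widetilde{\mathbf{x}}_t\right).
\end{equation*}
Then split $\mathbf{K}_t = \mathbf{K}^{\mathcal{N}}_t + \mathbf{K}^{\mathcal{A}}_t$. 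For the uncompromised streams, write $\overline{\mathbf{y}}_t - D_H\widetilde{\mathbf{x}}_t$ on those coordinates as $-\mathcal{H}_{\mathcal{N}}\overline{\mathbf{e}}_t + (\text{noise term } \overline{\mathbf{w}}) + (\text{consensus error term involving } \mathcal{Q}\widehat{\mathbf{x}}_t)$, using $D_H\widetilde{\mathbf{x}}_t = \mathcal{H}\overline{\mathbf{x}}_t + \mathcal{H}(\widetilde{\mathbf{x}}_t - (\mathbf{1}_N\otimes I_M)\overline{\mathbf{x}}_t)$ and $H_nQ_n = H_n$. The saturating gain $\mathbf{K}^{\mathcal{N}}_t$ acts componentwise; the key structural point, exactly as in Lemma~2 of~\cite{ChenSAGE}, is that when $\lVert\overline{\mathbf{e}}_t\rVert_2$ (plus the perturbations) is below threshold, the clipping on the uncompromised streams may or may not be active, but in either case the effective map $\overline{\mathbf{e}}_t \mapsto \sum_{p\in\mathcal{N}} h_p^c{}^\intercal(\cdots)$ is of the form $-A_1(t)\overline{\mathbf{e}}_t + (\text{small})$ for a positive-semidefinite $A_1(t)$ with $A_1(t) \succeq c\,\mathcal{G}_{\mathcal{N}}$ type lower bound when no clipping, and this is where Lemma~\ref{lem: main} enters: the compromised-stream contribution is a perturbation $y$ with $\lVert y\rVert_2 \le \Delta_{\mathcal{A}}\cdot(\text{something} \le \gamma_t)$, so under the resilience condition $\lambda_{\min}(\mathcal{G}_{\mathcal{N}}) > \Delta_{\mathcal{A}}$ the combined update matrix remains positive definite with minimum eigenvalue bounded below by $\lambda_{\min}(\mathcal{G}_{\mathcal{N}}) - \Delta_{\mathcal{A}} > 0$.

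Granting that, the induction closes as follows: assume $\lVert\overline{\mathbf{e}}_t\rVert_2 \le \overline{\gamma}_t$ for some $t \ge T_0$. The recursion then reads $\overline{\mathbf{e}}_{t+1} = (I - \alpha_t B_t)\overline{\mathbf{e}}_t + \alpha_t r_t$ with $B_t \succ 0$, $\lambda_{\min}(B_t)$ bounded below by a positive constant, and $\lVert r_t\rVert_2$ controlled by the consensus error $\le X/(t+1)^{\tau_3}$, the noise $\le W/(t+1)^{1/2-\tau_\gamma}$, and the clipped attack $\le \Delta_{\mathcal{A}}\gamma_t$ — which is precisely why $\overline{\gamma}_t$ is defined as $\gamma_t$ minus the two perturbation rates. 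A contraction estimate $\lVert(I-\alpha_tB_t)\overline{\mathbf{e}}_t\rVert_2 \le (1 - c\alpha_t)\lVert\overline{\mathbf{e}}_t\rVert_2$ (valid for $t$ large since $\alpha_t\to 0$, absorbing small $t$ into $T_0$) then gives $\lVert\overline{\mathbf{e}}_{t+1}\rVert_2 \le (1-c\alpha_t)\overline{\gamma}_t + \alpha_t(\text{perturbations}) \le \overline{\gamma}_{t+1}$, where the last inequality is a one-step comparison between the deterministic sequences that needs $\alpha_t$ decaying slower than $\gamma_t$ (i.e.~$\tau_1 < \tau_\gamma$... rather, the ordering $\tau_\gamma < \min(\tfrac12, \tau_1-\tau_2, 1-\tau_1)$ is exactly what makes $\overline{\gamma}_t > 0$ eventually and makes the recursion self-consistent). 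The main obstacle is this last bookkeeping step: verifying that with the stated exponent constraints the deterministic inequality $(1-c\alpha_t)\overline{\gamma}_t + \alpha_t\delta_t \le \overline{\gamma}_{t+1}$ actually holds for all large $t$, where $\delta_t$ collects the perturbation bounds; this requires comparing $(t+1)^{-\tau_\gamma}$, $(t+1)^{-\tau_3}$, $(t+1)^{-(1/2-\tau_\gamma)}$ and the factor $\alpha_t = a(t+1)^{-\tau_1}$, and checking $\tau_3 - \tau_1 > \tau_\gamma$... i.e.~$\tau_\gamma + \tau_1 - \tau_2 - \epsilon_X - \tau_1 = \tau_\gamma - \tau_2 - \epsilon_X$, which is where the precise choice of $\epsilon_X < \tau_1 - \tau_2$ and the role of $\tau_2$ must be pinned down. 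Everything else is the positive-definiteness machinery already packaged in Lemma~\ref{lem: main} together with the rates from Lemmas~\ref{lem: resilientConsensus} and the appendix noise lemma.
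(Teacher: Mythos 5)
Your overall strategy (conditions 1--2 from Lemma~\ref{lem: resilientConsensus} and the averaged-noise lemma, then a pathwise induction for condition 3 using the error dynamics and Lemma~\ref{lem: main}) is the paper's strategy, but the induction step as you sketch it has two concrete gaps. First, you hedge that on the uncompromised streams ``the clipping may or may not be active, but in either case'' the map contracts. The paper's proof needs, and establishes, the stronger statement: under the hypothesis $\left\lVert \overline{\mathbf{e}}_T \right\rVert_2 \leq \overline{\gamma}_T$, the triangle inequality gives $\left\lvert \overline{y}^{(p)}_n(T) - h_p^\intercal x_n(T) \right\rvert \leq \overline{\gamma}_T + \frac{X}{(T+1)^{\tau_3}} + \frac{W}{(T+1)^{\frac{1}{2}-\epsilon_W}} = \gamma_T$ for every $p \in \mathcal{N}$, hence $k_p(T) = 1$ and the full matrix $\mathcal{G}_{\mathcal{N}}$ appears in the contraction. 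That is the actual reason $\overline{\gamma}_t$ is $\gamma_t$ minus the two perturbation rates, not (as you state) to control the additive remainder $r_t$. Second, Lemma~\ref{lem: main} cannot be applied directly with $x = \overline{\mathbf{e}}_T$: the attack term $\overline{\mathbf{b}}_T$ is bounded by $\Delta_{\mathcal{A}}\overline{\gamma}_T$, which need not be smaller than $\lambda_{\min}\left(\mathcal{G}_{\mathcal{N}}\right)\left\lVert \overline{\mathbf{e}}_T\right\rVert_2$ when $\overline{\mathbf{e}}_T$ is much smaller than the threshold (e.g., near zero). The paper first splits $\mathbf{b}_T = \overline{\mathbf{b}}_T + \widetilde{\mathbf{b}}_T$ and replaces $\overline{\mathbf{e}}_T$ by the maximizer $\overline{\mathbf{e}}^*_T$ of the affine map over the ball $\left\{\left\lVert \overline{\mathbf{e}}\right\rVert_2 \leq \overline{\gamma}_T\right\}$, which lies on the boundary, so $\left\lVert \overline{\mathbf{e}}^*_T\right\rVert_2 = \overline{\gamma}_T$ and the hypothesis of Lemma~\ref{lem: main} holds; your sketch omits this device, so your claim that the combined matrix has minimum eigenvalue at least $\lambda_{\min}\left(\mathcal{G}_{\mathcal{N}}\right) - \Delta_{\mathcal{A}}$ is unjustified precisely in the regime the induction must cover.

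Relatedly, your closing recursion lists ``the clipped attack $\leq \Delta_{\mathcal{A}}\gamma_t$'' as part of the additive perturbation $r_t$. That bookkeeping does not close: $\Delta_{\mathcal{A}}\gamma_t$ is of the \emph{same} order as $\overline{\gamma}_t$, so it cannot be absorbed as a decaying remainder; if you keep it additive, the one-step comparison forces $\frac{\lambda_{\min}\left(\mathcal{G}_{\mathcal{N}}\right)}{\overline{J}} > \frac{\Delta_{\mathcal{A}}}{\underline{J}}$ (because the contraction is weighted by $\mathcal{D} = \diag\left(\left\lvert\mathcal{J}_1\right\rvert^{-1},\dots,\left\lvert\mathcal{J}_M\right\rvert^{-1}\right)$ while the attack picks up the factor $\underline{J}^{-1}$), which is strictly stronger than the stated resilience condition whenever the $\left\lvert \mathcal{J}_m \right\rvert$ are unequal. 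The attack must instead be absorbed into the positive definite matrix via Lemma~\ref{lem: main} \emph{before} the $\mathcal{D}$-weighting, and then the similarity of $\mathcal{D}\mathcal{G}^*_{T}$ to $\mathcal{D}^{\frac{1}{2}}\mathcal{G}^*_{T}\mathcal{D}^{\frac{1}{2}}$ yields the contraction factor $1 - \frac{\alpha_T \kappa}{\overline{J}}$ with $\kappa = \lambda_{\min}\left(\mathcal{G}_{\mathcal{N}}\right) - \Delta_{\mathcal{A}}$. Finally, your exponent check is aimed at the wrong comparison: what is needed is $\tau_3 > \tau_\gamma$ and $\frac{1}{2} - \epsilon_W > \tau_\gamma$ (so the correction terms in $\overline{\gamma}_t$ are lower order and $\overline{\gamma}_t \geq \frac{\overline{\Gamma}}{(t+1)^{\tau_\gamma}}$ eventually), together with $\tau_1 < 1$ so that $\alpha_T \rho \geq \frac{\tau_\gamma}{T+1}$ for all large $T$, which is how the paper closes the induction.
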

\noindent Compared to the proof of Lemma 2 in~\cite{ChenSAGE} (for distributed parameter estimation), the proof of Lemma~\ref{lem: avg1} for the distributed field estimation setup requires new technical tools that we develop in this paper. Specifically, the proof of Lemma~\ref{lem: avg1} requires Lemma~\ref{lem: main} from the previous subsection to analyze the effect of attacks. 

To prove Lemma~\ref{lem: avg1}, we show that conditions 1) and 2) hold as a result of existing Lemmas (Lemma~\ref{lem: resilientConsensus} and Lemma~\ref{lem: measureNoise} in the appendix). We then study the evolution of the network average estimation error $\overline{\mathbf{e}}_t$ along sample paths that satisfy conditions 1) and 2). Along these sample paths, we show that condition 3) holds by analyzing the estimate update step of \textbf{SAFE}. Finally, since the set of sample paths on which conditions 1) and 2) hold has probability measure $1$, we show that condition 3) also holds with probability $1$. 

\begin{proof}
	\underline{Step 1 (Satisfying conditions 1) and 2).)}: By Lemmas~\ref{lem: resilientConsensus} and~\ref{lem: measureNoise} (in the appendix), we have
\begin{align}\label{eqn: staysBounded1}
	\mathbb{P} \left(\lim_{t \rightarrow \infty} (t+1) ^{\tau_3} \left \lVert \mathcal{Q} \widehat{\mathbf{x}}_t \right \rVert_2 = 0 \right) &= 1, \\
\mathbb{P} \left(\lim_{t \rightarrow \infty} (t+1)^{\delta_0} \left\lVert \overline{\mathbf{w}}_t \right\rVert_2 = 0 \right) &= 1, \label{eqn: staysBounded2}
\end{align}
for every $0 \leq \tau_3 < \tau_\gamma + \tau_1 - \tau_2$ and every $0 \leq \delta_0 < \frac{1}{2}.$ That is, the set of sample paths $\Omega ^' \subset \Omega$ on which
	$\lim_{t \rightarrow \infty} (t+1)^{\delta_0} \left\lVert \overline{\mathbf{w}}_{t, \omega}\right\rVert_2 = 0$ and 
	$\lim_{t \rightarrow \infty} (t+1) ^{\tau_3} \left \lVert \widehat{\mathbf{x}}_{t, \omega} \right \rVert_2 = 0$ 
has probability measure $1$. For each sample path $\omega \in \Omega^'$, there exists finite $X_\omega$ and $W_\omega$ such that
\begin{align}
	 \left\lVert \mathcal{Q} \widehat{\mathbf{x}}_{t, \omega} \right \rVert_2 \leq \frac{X_\omega} {(t+1)^{\tau_3}}, \: \left\lVert \overline{\mathbf{w}}_{t, \omega} \right \rVert_2 &\leq \frac{W_\omega} {(t+1)^{\frac{1}{2} - \epsilon_W}},
\end{align}
where the notation $\widehat{\mathbf{x}}_{t, \omega}$ and $\overline{\mathbf{w}}_{t, \omega}$ mean $\widehat{\mathbf{x}}_t$ and $\overline{\mathbf{w}}_t$ along the sample path $\omega$.

\underline{Step 2 (Dynamics of $\overline{\mathbf{e}}_t$)}:We now consider the evolution of $\overline{\mathbf{e}}_t$. From~\eqref{eqn: stackedTildeX}, we have that $\overline{\mathbf{x}}_t$ evolves according to
\begin{equation}\label{eqn: aux1}
	\overline{\mathbf{x}}_{t+1} = \overline{\mathbf{x}}_t + \alpha_t \mathcal{D} \left( \mathbf{1}^\intercal_N \otimes I_M \right)D_H^\intercal \mathbf{K}_t \left(\overline{\mathbf{y}}_t - D_H \widetilde{\mathbf{x}}_t \right).
\end{equation}
To derive~\eqref{eqn: aux1} from~\eqref{eqn: stackedTildeX}, we have used the fact that $\left(\mathbf{1}_N^\intercal \otimes I_M \right)  \mathbf{L}_t = 0$. Recall, the definition of $Q_n$, that for each agent $n$, $H_n Q_n = H_n$, which means that 
	$D_H \mathcal{Q} = D_H.$
We express $D_H \mathbf{x}_t$ as
\begin{align}\label{eqn: aux3}
	D_H \mathbf{x}_t &= D_H \left( \mathbf{1}_N \otimes I_M \right) \overline{\mathbf{x}}_t + D_H \mathcal{Q} \widehat{\mathbf{x}}_t.
\end{align}
Then, using the fact that $\mathbf{K}_t = \mathbf{K}_t^{\mathcal{N}} + \mathbf{K}_t^{\mathcal{A}}$ and substituting~\eqref{eqn: aux3} into~\eqref{eqn: aux1}, we find that $\overline{\mathbf{e}}_t = \overline{\mathbf{x}}_t - \theta^*$ evolves according to
\begin{equation}\label{eqn: averageErrorDynamics}
	\begin{split}
		\overline{\mathbf{e}}_{t+1} &= \left( I_{M} - \alpha_t\mathcal{D} \sum_{p \in \mathcal{N}} k_p(t) h_p h_p^\intercal \right) \overline{\mathbf{e}}_t - \\
		& \quad \alpha_t \mathcal{D} \left( \mathbf{1}_N^\intercal \otimes I_M \right) D_H^\intercal \left( \mathbf{K}^{\mathcal{N}}_t \left(D_H\mathcal{Q}\widehat{\mathbf{x}}_t - \overline{\mathbf{w}}_t \right) +  \mathbf{K}^{\mathcal{A}}_t \left( \overline{\mathbf{y}}_t - D_H \widetilde{\mathbf{x}}_t \right)\right)
	\end{split}
\end{equation}

Let $\mathbf{b}_t = \left(\mathbf{1}_N^\intercal \otimes I_M \right) D^\intercal_H \mathbf{K}_t^{\mathcal{A}} \left( \overline{\mathbf{y}}_t - D_H \widetilde{\mathbf{x}}_t \right)$ model the effect of the attack on the evolution of $\overline{\mathbf{e}}_t$. Recall that, by definition of the $k_p(t)$ and $\mathbf{K}_t^{\mathcal{A}}$ (equations~\eqref{eqn: smallK} and~\eqref{eqn: ka}), we have $\left\lVert \mathbf{K}_t^{\mathcal{A}} \left( \overline{\mathbf{y}}_t - D_H \widetilde{\mathbf{x}}_t \right) \right\rVert_\infty \leq \gamma_t$. Then, from the definition of $\Delta_{\mathcal{A}}$, we have
\begin{equation}\label{eqn: aux4}
	\left\lVert \mathbf{b}_t \right\rVert_2 \leq \Delta_{\mathcal{A}} \gamma_t. 
\end{equation}
Note that $\gamma_t = \overline{\gamma}_t + X(t+1)^{\tau_3} + W(t+1)^{\frac{1}{2} - \epsilon_W}$, which means that we may partition $\mathbf{b}_t$ as 
\begin{equation}\label{eqn: aux4a}
	\mathbf{b}_t = \overline{\mathbf{b}}_t + \widetilde{\mathbf{b}}_t,
\end{equation}
where $\left\lVert \widetilde{\mathbf{b}}_t \right\rVert_2 \leq  \Delta_{\mathcal{A}} \left(X(t+1)^{\tau_3} + W(t+1)^{\frac{1}{2} - \epsilon_W}\right)$ and $\left\lVert \overline{\mathbf{b}}_t \right\rVert_2 \leq \Delta_{\mathcal{A}} \overline{\gamma}_t$.

\underline{Step 3 (Pathwise analysis of $\overline{\mathbf{e}}_t$)}: Next, we study the evolution of $\overline{\mathbf{e}}_{t, \omega}$ for sample paths $\omega \in \Omega ^'$. 
We show that, for sufficiently large $T_0$, if, for some $T \geq T_0$, $\left\lVert \overline{\mathbf{e}}_{T, \omega} \right\rVert_2 \leq \overline{\gamma}_{T, \omega}$, then, for all $ t \geq T$, $\left\lVert \overline{\mathbf{e}}_{t, \omega} \right\rVert_2 \leq \overline{\gamma}_{t, \omega}$. For the noncompromised measurement streams $p \in \mathcal{N}$, by the triangle inequality, we have
\begin{align}
	\left\lvert\overline{y}^{(p)}_n(T, \omega) - h_p^\intercal x_n(T, \omega) \right \rvert &\leq \left\lvert h_p^\intercal \left(Q_n \left(\overline{\mathbf{x}}_{T, \omega} - \widetilde{x}_n\left(T, \omega \right) \right) - \overline{\mathbf{e}}_t \right) \right\rvert + \left\lvert \overline{w}^{p}_n(T, \omega) \right \rvert, \\
	&\leq \left\lVert \overline{\mathbf{e}}_{T, \omega} \right \rVert_2 + \left\lVert \mathcal{Q}\widehat{\mathbf{x}}_{T, \omega} \right\rVert_2 + \left\lVert \overline{\mathbf{w}}_{T, \omega} \right \rVert_2,\label{eqn: triangle1}\\
&\leq \overline{\gamma}_{T, \omega} + \frac{X_\omega}{(T+1)^{\tau_3}} + \frac{W_\omega}{(T+1)^{\frac{1}{2} - \epsilon_W}} = \gamma_t \label{eqn: aux5},
\end{align}
which means that $k_p(t) = 1$ for all $p \in \mathcal{N}$. Then, using the fact that $\sum_{p \in \mathcal{N}} h_p h_p^\intercal = \mathcal{G}_{\mathcal{N}}$, from~\eqref{eqn: averageErrorDynamics}, we have
\begin{equation}\label{eqn: aux6}
	\begin{split}
		&\overline{\mathbf{e}}_{T+1, \omega} = \mathbf{e}_{T, \omega} - \alpha_T\mathcal{D} \left(\mathcal{G}_{\mathcal{N}} \overline{\mathbf{e}}_{T, \omega} + \overline{\mathbf{b}}_{T, \omega} + \widetilde{\mathbf{b}}_{T, \omega} \right) - \\
		&\quad \alpha_T \mathcal{D} \left( \mathbf{1}_N^\intercal \otimes I_M \right) D_H^\intercal \mathbf{K}^{\mathcal{N}}_{T, \omega} \left(D_H\mathcal{Q}\widehat{\mathbf{x}}_{T, \omega} - \overline{\mathbf{w}}_{T, \omega} \right).\\
	\end{split}
\end{equation}
Further, using the fact that $\Delta_{\mathcal{A}} \leq \left\lvert \mathcal{A} \right\rvert$ and $\left\lvert \mathcal{A} \right\rvert + \left\lvert \mathcal{N} \right\rvert = P$, we have

\begin{equation}\label{eqn: aux6a}
	\begin{split}
		\! \left\lVert \overline{\mathbf{e}}_{T+1, \omega} \right\rVert_2 \! \leq \! \left\lVert  \left( I_{M} - \alpha_T\mathcal{D} \mathcal{G}_{\mathcal{N}} \right) \overline{\mathbf{e}}_{T, \omega} \!\!+\!\! \alpha_T \mathcal{D} \overline{\mathbf{b}}_{T, \omega} \right\rVert_2 \! + \! \frac{\alpha_T P X_\omega}{\underline{J}(T+1)^{\tau_3}} \!\!+\!\! \frac{\alpha_T P W_\omega}{\underline{J}(T+1)^{\frac{1}{2} - \epsilon_W}},\!\!
	\end{split}
\end{equation}
where $\underline{J} = \min_{m = 1, \dots, M} \left\lvert \mathcal{J}_m \right\rvert.$ 

Let
\begin{equation}\label{eqn: aux7}
\begin{split}
	&\overline{\mathbf{e}}_{T, \omega}^*= \argmax_{\overline{\mathbf{e}} \in \mathbb{R}^M, \: \left\lVert \overline{\mathbf{e}} \right\rVert_2 \leq \overline{\gamma}_{T, \omega}} \left\lVert  \Big( I_{M} - \alpha_T\mathcal{D} \mathcal{G}_{\mathcal{N}} \Big) \overline{\mathbf{e}} + \alpha_T \mathcal{D} \overline{\mathbf{b}}_{T, \omega} \right\rVert_2.
\end{split}
\end{equation}
Since $\left\lVert \overline{\mathbf{e}}_{T, \omega} \right\rVert_2 \leq \overline{\gamma}_{T, \omega}$, $\overline{\mathbf{e}}_{T, \omega}$ is a feasible solution for the optimization problem above, which means that
\begin{equation}\label{eqn: aux8}
	\begin{split}
		\! \left\lVert \overline{\mathbf{e}}_{T+1, \omega} \right\rVert_2 \! \leq \! \left\lVert  \left( I_{M} - \alpha_T\mathcal{D} \mathcal{G}_{\mathcal{N}} \right) \overline{\mathbf{e}}^*_{T, \omega} \!\!+\!\! \alpha_T \mathcal{D} \overline{\mathbf{b}}_{T, \omega} \right\rVert_2 \! + \! \frac{\alpha_T P X_\omega}{\underline{J}(T+1)^{\tau_3}} \!\!+\!\! \frac{\alpha_T P W_\omega}{\underline{J}(T+1)^{\frac{1}{2} - \epsilon_W}}.\!\!
	\end{split}
\end{equation}
Because $\overline{\mathbf{e}}_{T, \omega}^*$ maximizes a convex function over a closed convex set, it belongs to the boundary of the convex set, i.e., $\left\lVert \overline{\mathbf{e}}_{T, \omega}^* \right\rVert_2 = \overline{\gamma}_{T, \omega}.$ Note that $\left\lVert \overline{b}_{T, \omega} \right\rVert_2 \leq \overline{\gamma}_{T, \omega} < \lambda_\min \left( \mathcal{G}_N \right) \left\lVert \overline{\mathbf{e}}_{T, \omega}^* \right \rVert_2$, which, by Lemma~\ref{lem: main}, means that there exists $\mathcal{G}^*_{T, \omega} \succ 0$ such that 
\begin{equation}\label{eqn: aux9}
	\mathcal{G}^*_{T, \omega} \overline{\mathbf{e}}_{T, \omega}^* = \mathcal{G}_{\mathcal{N}} \overline{\mathbf{e}}_{T, \omega}^* + \overline{\mathbf{b}}_{T, \omega}
\end{equation} 
with a minimum eigenvalue that satisfies
\begin{equation}\label{eqn: kappaDef}
	\lambda_\min \left( \mathcal{G}^*_{T, \omega} \right) \geq \kappa = \lambda_{\min} \left( \mathcal{G}_{\mathcal{N}} \right) - \Delta_{\mathcal{A}}.
\end{equation}
Substituting~\eqref{eqn: aux9} into~\eqref{eqn: aux8}, we have
\begin{equation}\label{eqn: aux10}
	\begin{split}
		\left\lVert \overline{\mathbf{e}}_{T+1, \omega} \right\rVert_2 & \leq \left\lVert  \left( I_{M} - \alpha_T\mathcal{D} \mathcal{G}^*_{T, \omega} \right) \overline{\mathbf{e}}^*_{T, \omega}\right\rVert_2 + \frac{\alpha_T P X_\omega}{\underline{J}(T+1)^{\tau_3}} \!\!+\!\! \frac{\alpha_T P W_\omega}{\underline{J}(T+1)^{\frac{1}{2} - \epsilon_W}}.
	\end{split}
\end{equation}
The matrix $\mathcal{D}\mathcal{G}_{T, \omega}^*$ is similar to the matrix $\mathcal{D}^{\frac{1}{2}}\mathcal{G}_{T, \omega}^*\mathcal{D}^{\frac{1}{2}}.$ Using the fact that, for $T$ large enough, $\left\lVert I_{M} - \alpha_T\mathcal{D} \mathcal{G}^*_{T, \omega}\right\rVert_2 \leq 1 - \frac{\alpha_T \kappa}{\overline{J}},$ where $\overline{J} = \max_{m = 1, \dots, M} \left\lvert \mathcal{J}_M \right\rvert$,~\eqref{eqn: aux10} becomes
\begin{equation}\label{eqn: aux11}
	\left\lVert \overline{\mathbf{e}}_{T+1, \omega} \right\rVert_2  \leq \left(1 - \frac{\alpha_T \kappa}{\overline{J}} \right)\overline{\gamma}_{T, \omega} + \frac{\alpha_T P \left(X_\omega + W_\omega \right)}{\underline{J}(T+1)^{\tau_4}},
\end{equation}
where $\tau_4 = \min\left(\tau_3, \frac{1}{2} - \epsilon_W \right).$

\underline{Step 4 (Satisfying condition 3.)}: From~\eqref{eqn: aux11}, we show that for $T$ large enough, $\left\lVert \overline{\mathbf{e}}_{T+1} \right\rVert_2 \leq \overline{\gamma}_{T+1, \omega}$. It suffices to show that 
\begin{equation}\label{eqn: aux12}
	\left\lVert \overline{\mathbf{e}}_{T+1} \right\rVert_2 \leq \left(\frac{T+1}{T+2} \right)^{\tau_\gamma}\overline{\gamma}_{T, \omega},
\end{equation} since $\left(\frac{T+1}{T+2} \right)^{\tau_\gamma}\overline{\gamma}_{T, \omega} < \overline{\gamma}_{T+1, \omega}.$ Note that we may express $\overline{\gamma}_t$ as
\begin{equation}\label{eqn: aux13}
	\overline{\gamma}_t = \frac{\Gamma - \frac{X}{(t+1)^{\tau_3 - \tau_\gamma}}  - \frac{W}{(t+1)^{\frac{1}{2} - \epsilon_W - \tau_\gamma}}}{(t+1)^{\tau_\gamma}}.
\end{equation}
The numerator in~\eqref{eqn: aux13} increases in $t$, which means that, for any $0 < \overline{\Gamma} < \Gamma$, there exists a finite $T$ such that $\overline{\gamma}_t > \frac{\overline{\Gamma}}{(t+1)^{\tau_{\gamma}}}$ for all $t \geq T$. Thus, for any $0 < \overline{\Gamma} < \Gamma$, there exists a finite $T$ that is sufficiently large so that~\eqref{eqn: aux11} becomes
\begin{equation}\label{eqn: aux14}
	\left\lVert \overline{\mathbf{e}}_{T+1, \omega} \right\rVert_2  \leq \left(1 - \alpha_{T} \rho_{T, \omega} \right)\overline{\gamma}_{T, \omega},
\end{equation}
where \begin{equation}\label{eqn: rhoDef} \rho_{T, \omega} = \frac{\kappa}{\overline{J}} - \frac{P(X_\omega + W_\omega)}{\overline{\Gamma}\underline{J} (T+1)^{\tau_4 - \tau_\gamma}}. \end{equation} The second term in $\rho_{T, \omega}$ decays to $0$ as $T$ increases, so, for sufficiently large $T$, $\rho_{T, \omega} \geq 0$. 

Substituting for~\eqref{eqn: aux14} and applying the inequality $(1-x) \leq e^{-x}$ for  $x \geq 0$,~\eqref{eqn: aux12} becomes $-\alpha_T \rho_{T, \omega} \leq \tau_\gamma \log \left( \frac{T+1}{T+2} \right)$. Further applying the inequality $\log \left(\frac{T + 1} {T + 2} \right) \geq 1 -\frac{T + 2} {T + 1} = -\frac{1}{T + 1}$, a sufficient condition for~\eqref{eqn: aux12} is
\begin{equation}\label{eqn: aux15}
	\alpha_T \rho_{T, \omega} \geq \frac{\tau_\gamma}{T+1}.
\end{equation}
By definition of $\rho_{T, \omega}$, for any $0 < \overline{\rho} < \frac{\kappa}{\overline{J}}$, there exists a sufficiently large finite $T$, such that $\rho_{T, \omega} > \overline{\rho}$. Then, for sufficiently large $T$, a sufficient condition for~\eqref{eqn: aux15} becomes
	$\frac{a \overline{\rho}}{(T+1)^{\tau_1}} >\frac{ \tau_\gamma}{T+1},$
which is satisfied for all $T \geq \left(\frac{\tau_\gamma}{a \overline{\rho} }\right)^{\frac{1}{{1-\tau_1}}}- 1.$ Thus, there exists a sufficiently large finite $T_0$ such that, if $\left\lVert \overline{\mathbf{e}}_{T, \omega} \right\rVert_2 \leq \overline{\gamma}_{T, \omega}$ for any $T \geq T_0$, then $\left\lVert \overline{\mathbf{e}}_{T + 1, \omega} \right\rVert_2 \leq \overline{\gamma}_{T + 1, \omega}$. The same analysis above applies to all $t = T+1, T+2, \dots,$ which means that $\left\lVert \overline{\mathbf{e}}_{t, \omega} \right\rVert_2 \leq \overline{\gamma}_{t, \omega}$ for all $t \geq T$. This holds on all sample paths $\omega \in \Omega^'$, i.e., almost surely. 
\end{proof}

Using the previous result (Lemma~\ref{lem: avg1}), we now study the convergence of the generalized network average estimate to the true value of the parameter. 
\begin{lemma}[Generalized Network Average Consistencys]\label{lem: avg2}
	  If $\lambda_{\min}\left(\mathcal{G}_{\mathcal{N}} \right) > \Delta_{\mathcal{A}},$ then, under \textbf{SAFE}, for every $0 \leq \tau_0 < \min\left(\tau_\gamma, \frac{1}{2} - \tau_\gamma \right)$,
\begin{equation}\label{eqn: averageConv}
	\mathbb{P} \left( \lim_{t \rightarrow \infty} (t+1)^{\tau_0} \left\lVert \overline{\mathbf{e}}_t \right \rVert_2 = 0 \right) = 1.
\end{equation}
\end{lemma}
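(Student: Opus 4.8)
The plan is to obtain Lemma~\ref{lem: avg2} directly from Lemma~\ref{lem: avg1}. That lemma already furnishes, almost surely, a time $T_0$ and finite constants $X,W$ for which the consensus and noise bounds $\left\lVert \mathcal{Q}\widehat{\mathbf{x}}_t \right\rVert_2 \le X(t+1)^{-\tau_3}$ and $\left\lVert \overline{\mathbf{w}}_t \right\rVert_2 \le W(t+1)^{-(1/2-\epsilon_W)}$ hold, and for which the sublevel region $\{\, \left\lVert \overline{\mathbf{e}}_t \right\rVert_2 \le \overline{\gamma}_t \,\}$ is forward invariant for all $t \ge T_0$. Hence two things remain: (a) show that, almost surely, the error trajectory \emph{enters} this region at some finite (random) time $T \ge T_0$; and (b) translate the resulting estimate $\left\lVert \overline{\mathbf{e}}_t \right\rVert_2 \le \overline{\gamma}_t$, valid for all $t \ge T$, into the polynomial rate~\eqref{eqn: averageConv}.

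Step (a) is the main obstacle. On a sample path $\omega$ from the probability-one set on which the above bounds hold, I would track the scalar $\left\lVert \overline{\mathbf{e}}_{t,\omega} \right\rVert_2$ using the error recursion~\eqref{eqn: averageErrorDynamics}. If $\left\lVert \overline{\mathbf{e}}_{t,\omega} \right\rVert_2 \le \overline{\gamma}_{t,\omega}$ we are done by forward invariance; otherwise, since $\overline{\gamma}_t$ dominates both the consensus residual and the noise, the dominant term in each uncompromised innovation is $-h_p^\intercal\overline{\mathbf{e}}_{t,\omega}$, so after the saturation~\eqref{eqn: smallK} the uncompromised streams still act in a descent direction for $\left\lVert \overline{\mathbf{e}}_{t,\omega} \right\rVert_2$, whereas the compromised streams contribute a vector of $\ell_2$ norm at most $\Delta_{\mathcal{A}}\gamma_t$. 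Under the resilience condition $\lambda_{\min}(\mathcal{G}_{\mathcal{N}}) > \Delta_{\mathcal{A}}$ the descent dominates: the attack contribution is absorbed into the uncompromised one exactly as in the proof of Lemma~\ref{lem: avg1}, invoking Lemma~\ref{lem: main} with $A_1 = \mathcal{G}_{\mathcal{N}}$ and with the attack as the perturbation, which produces an effective positive-definite Grammian with minimum eigenvalue at least $\kappa$. This yields a per-step decrease of order $\alpha_t \kappa \left\lVert \overline{\mathbf{e}}_{t,\omega} \right\rVert_2$ for as long as the trajectory stays above $\overline{\gamma}_t$; because $\sum_t \alpha_t = \infty$ while $\overline{\gamma}_t$ decays only polynomially, the trajectory cannot remain above $\overline{\gamma}_t$ indefinitely and so crosses it at some finite $T$, after which forward invariance keeps it below. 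The delicate case is the intermediate regime in which only \emph{some} uncompromised streams saturate, so that $\sum_{p\in\mathcal{N}} k_p(t)h_ph_p^\intercal$ is ill-conditioned; one must argue that the still-unsaturated streams together with the sign contributions of the saturated ones retain enough redundancy to keep the descent, which is where global observability of $\mathcal{N}$ (a consequence of the resilience condition) is used.

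For step (b), once $\left\lVert \overline{\mathbf{e}}_t \right\rVert_2 \le \overline{\gamma}_t$ for all $t \ge T$, writing $\overline{\gamma}_t$ in the form~\eqref{eqn: aux13} shows the parenthesized factor increases to $\Gamma$ (in particular $\overline{\gamma}_t > 0$ eventually), hence $(t+1)^{\tau_0}\left\lVert \overline{\mathbf{e}}_t \right\rVert_2 \to 0$ for every $\tau_0 < \tau_\gamma$ immediately. To also cover the range $\tau_0 < 1/2 - \tau_\gamma$, I would return to the \emph{clean} recursion~\eqref{eqn: aux6}, which holds for all $t \ge T$ since the uncompromised streams no longer saturate: absorbing the attack term $\overline{\mathbf{b}}_t$ into the system matrix via Lemma~\ref{lem: main} (giving minimum eigenvalue at least $\kappa$), the error is then driven only by the noise $\overline{\mathbf{w}}_t = O((t+1)^{-1/2+\epsilon_W})$ and the consensus residual $O((t+1)^{-\tau_3})$; feeding this linear recursion into the stochastic-approximation decay estimate from the appendix, and balancing the decay exponent against the $(t+1)^{-\tau_\gamma}$-scale bound already in hand, yields the exponent $\min(\tau_\gamma, 1/2-\tau_\gamma)$. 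Since every step holds on the probability-one set supplied by Lemma~\ref{lem: avg1}, the bound~\eqref{eqn: averageConv} holds almost surely, which completes the proof.
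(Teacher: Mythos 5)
Your step (b) is fine as far as it goes (and, in fact, more than is needed: the target exponent is $\min(\tau_\gamma,\tfrac12-\tau_\gamma)\le\tau_\gamma$, so once $\left\lVert \overline{\mathbf{e}}_t \right\rVert_2 \le \overline{\gamma}_t$ holds for all large $t$ you are already done; there is no need to "also cover" $\tau_0<\tfrac12-\tau_\gamma$ separately). The genuine gap is step (a): you assert that, almost surely, the trajectory must \emph{enter} the region $\left\{\left\lVert \overline{\mathbf{e}}_t \right\rVert_2 \le \overline{\gamma}_t\right\}$ at some finite time, "because $\sum_t \alpha_t=\infty$ while $\overline{\gamma}_t$ decays only polynomially." This is not justified and is not true in general. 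While the error sits above the threshold, the uncompromised innovations saturate, so the guaranteed per-step contraction is not of order $\alpha_t\kappa\left\lVert \overline{\mathbf{e}}_t \right\rVert_2$ but of order $\alpha_t \widehat{K}_{t}\kappa\left\lVert \overline{\mathbf{e}}_t \right\rVert_2$ with the state-dependent gain $\widehat{K}_{t}\approx \gamma_t/\bigl(\left\lVert \overline{\mathbf{e}}_t \right\rVert_2 + O((t+1)^{-\tau_4})\bigr)$, while the noise and consensus residuals force the error at level $O((t+1)^{-\tau_4})$ with $\tau_4=\min(\tau_3,\tfrac12-\epsilon_W)$. Balancing these gives a decay of $\left\lVert \overline{\mathbf{e}}_t \right\rVert_2$ no better than $(t+1)^{-(\tau_4-\tau_\gamma)}$, and when $\tau_\gamma\ge \tfrac14$ this is \emph{slower} than the threshold $\overline{\gamma}_t\sim(t+1)^{-\tau_\gamma}$, so the error can plausibly hover above $\overline{\gamma}_t$ forever; nothing in your argument rules this out. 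This is exactly why the paper does not prove entry: it splits into the two cases and treats the "never enters" case head-on, using the uniform lower bound $\widehat{K}_{t,\omega}\le k_p(t,\omega)$ for all $p\in\mathcal{N}$ (equations \eqref{eqn: avgConv2}--\eqref{eqn: avgConv3}), which gives $\widetilde{\mathcal{G}}_{\mathcal{N}}^{t,\omega}\succeq \widehat{K}_{t,\omega}\mathcal{G}_{\mathcal{N}}$, then Lemma~\ref{lem: main} to absorb the attack, and finally the state-dependent recursion result Lemma~\ref{lem: timeVaryingSystem1} to get the rate $\min(\tau_3-\tau_\gamma,\tfrac12-\epsilon_W-\tau_\gamma)$ --- which is precisely where the $\tfrac12-\tau_\gamma$ term in the statement comes from.

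Relatedly, the "delicate case" you flag --- partial saturation making $\sum_{p\in\mathcal{N}}k_p(t)h_ph_p^\intercal$ ill-conditioned --- is left as a gesture toward a sign/redundancy argument, but it is the crux of the above-threshold regime. The paper's resolution needs no sign argument: because every uncompromised innovation is bounded by $\left\lVert \overline{\mathbf{e}}_t \right\rVert_2+\left\lVert \mathcal{Q}\widehat{\mathbf{x}}_t \right\rVert_2+\left\lVert \overline{\mathbf{w}}_t \right\rVert_2$, the scalar $\widehat{K}_{t,\omega}$ in \eqref{eqn: avgConv2} lower-bounds \emph{all} gains $k_p(t,\omega)$, $p\in\mathcal{N}$, simultaneously, so the saturated Grammian inherits a minimum eigenvalue at least $\widehat{K}_{t,\omega}\lambda_{\min}(\mathcal{G}_{\mathcal{N}})$ and the resilience condition still dominates the attack term scaled by the same $\widehat{K}_{t,\omega}$. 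To repair your proof you would either need to supply this mechanism (at which point you no longer need the entry claim at all), or restrict to the regime $\tau_\gamma<\tfrac14$ where entry can be forced by a rate comparison --- but that would prove a weaker statement than the lemma.
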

We prove Lemma~\ref{lem: avg2} by studying the behavior of $\overline{\mathbf{e}}_{t}$ pathwise over the set of sample paths where Lemma~\ref{lem: avg1} holds. Along each sample path, we show that \textbf{SAFE} bounds the perturbance from the attack on the evolution of $\overline{\mathbf{e}}_t$. We use Lemma~\ref{lem: main} to model the \textit{perturbed} (as a result of the attack) dynamics of $\overline{\mathbf{e}}_t$ using an \textit{unperturbed} dynamics matrix, and then use Lemma~\ref{lem: timeVaryingSystem1} in the appendix to show that $\left\lVert \overline{\mathbf{e}}_t \right\rVert_2$ converges to $0$.
\begin{proof}
 Consider the set of sample paths $\Omega^'$ on which Lemma~\ref{lem: avg1} holds. The set $\Omega^'$ has probability measure $1$. For each sample path $\omega \in \Omega^'$, there exists finite $T_{0, \omega}$ such that, if at any $T \geq T_{0, \omega}$, we have $\left\lVert \overline{\mathbf{e}}_{T, \omega} \right \rVert_2 \leq \overline{\gamma}_{T, \omega}$, then, we have $\left \lVert \overline{\mathbf{e}}_{t, \omega} \right\rVert_2 \leq \overline{\gamma}_{t, \omega}$ for all $t \geq T$. We now analyze $\left\lVert \overline{\mathbf{e}}_{t, \omega} \right\rVert_2$ for $t \geq T_{0, \omega}$. For each sample path $\omega$, there are two possibilities: either 1. there exists $T$ such that $\left\lVert \overline{\mathbf{e}}_{T, \omega} \right \rVert_2 \leq \overline{\gamma}_{T, \omega}$, or 2. $\left\lVert \overline{\mathbf{e}}_{t, \omega} \right \rVert_2 \geq \overline{\gamma}_{t, \omega}$ for all $t \geq T_{0, \omega}.$ If the first case occurs, then, as a consequence of Lemma~\ref{lem: avg1}, we have
	$\left\lVert \overline{\mathbf{e}}_{t, \omega} \right \rVert_2 \leq \frac{\Gamma}{(t+1)^{\tau_\gamma}},$
for all $t \geq T_{0, \omega}$, which means that $\lim_{t \rightarrow \infty} (t+1)^{\tau_0} \left\lVert \overline{\mathbf{e}}_{t, \omega} \right\rVert_2 = 0$ for every $0 \leq \tau_0 < \tau_\gamma$. 

If the second case occurs, then, for all $t \geq T_{0, \omega}$, we have $\left\lVert \overline{\mathbf{e}}_{t, \omega} \right \rVert_2 > \overline{\gamma}_{t, \omega}$. Define
\begin{equation}\label{eqn: avgConv2}
	\widehat{K}_{t, \omega} = \frac{\overline{\gamma}_{t, \omega} + \frac{X_\omega}{(t+1)^{\tau_3}} + \frac{W_\omega}{(t+1)^{\frac{1}{2} - \epsilon_W}}}{\left\lVert \overline{\mathbf{e}}_{t, \omega} \right \rVert_2  + \frac{X_\omega}{(t+1)^{\tau_3}} + \frac{W_\omega}{(t+1)^{\frac{1}{2} - \epsilon_W}}}. 
\end{equation}
The numerator in~\eqref{eqn: avgConv2}  is equal to the threshold $\gamma_t$. Since the sample path $\omega$ is chosen from a set on which Lemma~\ref{lem: avg1} holds, we have $\left\lVert \mathcal{Q} \widehat{\mathbf{x}}_{t, \omega} \right \rVert_2 \leq \frac{X_\omega}{(t+1)^{\tau_3}}$ and $\left\lVert \overline{\mathbf{w}}_t \right \rVert_2 \leq \frac{W_\omega}{(t+1)^{\frac{1}{2} - \epsilon_W}}$. Thus, as a consequence of~\eqref{eqn: triangle1}, the denominator in~\eqref{eqn: avgConv2} is greater than or equal to $\left\lvert\overline{y}^{(p)}_n(t, \omega) - h_p^\intercal x_n(t, \omega) \right\rvert$ for all uncompromised measurement streams $p \in \mathcal{N}$, which means that, for all $p \in \mathcal{N}$, we have
\begin{equation}\label{eqn: avgConv3}
	\widehat{K}_{t, \omega} \leq k_p \left(t, \omega \right).
\end{equation}
That is, $\widehat{K}_{t, \omega}$ is a lower bound on the saturating gains $k_p(t, \omega)$ for all measurement uncompromised measurement streams. Define the matrix 
	$\widetilde{\mathcal{G}}_{\mathcal{N}}^{t, \omega} = \sum_{p \in \mathcal{N}} k_p(t, \omega) h_p h_p^\intercal,$
and note that, as a consequence of~\eqref{eqn: avgConv3}, we have $\widetilde{\mathcal{G}}_{\mathcal{N}}^{t, \omega} \succ \widehat{K}_{t, \omega} \mathcal{G}_{\mathcal{N}}$, where, recall, $\mathcal{G}_{\mathcal{N}} = \sum_{p \in \mathcal{N}} h_p h_p^\intercal.$

Rearranging~\eqref{eqn: avgConv2}, we have
\begin{equation}\label{eqn: avgConv4}
	\gamma_t = \widehat{K}_{t, \omega} \left(\left\lVert \overline{\mathbf{e}}_{t, \omega} \right \rVert_2  + \frac{X_\omega}{(t+1)^{\tau_3}} + \frac{W_\omega}{(t+1)^{\frac{1}{2} - \epsilon_W}} \right).
\end{equation}
Recall, from the proof of Lemma~\ref{lem: avg1}, that we let $\mathbf{b}_t = \Big(\mathbf{1}_N^\intercal \otimes I_M \Big) D^\intercal_H \mathbf{K}_t^{\mathcal{A}} \Big( \overline{\mathbf{y}}_t - D_H \widetilde{\mathbf{x}}_t \Big)$ represent the effect of the attack on the evolution of $\overline{\mathbf{e}}_t$, and, by~\eqref{eqn: aux4}, we have $\left\lVert  \mathbf{b}_t \right\rVert \leq \Delta_{\mathcal{A}} \gamma_t$. For each sample path $\omega$, we partition $\mathbf{b}_{t, \omega}$ (differently from the partition in~\eqref{eqn: aux4a}) as $\mathbf{b}_{t, \omega} = \overline{\mathbf{b}}_{t, \omega} + \widetilde{\mathbf{b}}_{t, \omega},$ where 
\begin{align}
	\left\lVert \overline{\mathbf{b}}_{t, \omega} \right\rVert_2 &\leq \widehat{K}_{t, \omega} \Delta_{\mathcal{A}} \left\lVert \overline{\mathbf{e}}_{t, \omega} \right\rVert_2, \label{eqn: avgConv5} \\
	\left\lVert \widetilde{\mathbf{b}}_{t, \omega} \right\rVert_2 &\leq \widehat{K}_{t, \omega} \Delta_{\mathcal{A}} \left(\frac{X_\omega}{(t+1)^{\tau_3}} + \frac{W_\omega}{(t+1)^{\frac{1}{2} - \epsilon_W}} \right).
\end{align}
Substituting for $\widetilde{\mathcal{G}}_{\mathcal{N}}^{t, \omega}$ and $\mathbf{b}_t$ into the dynamics of $\overline{\mathbf{e}}_{t}$ (equation~\eqref{eqn: averageErrorDynamics}) and taking the $\ell_2$-norm of both sides, we have
\begin{equation}\label{eqn: avgConv8}
	\begin{split}
		\left\lVert \overline{\mathbf{e}}_{t+1, \omega} \right\rVert_2 & \leq \left\lVert  \left( I_{M} - \alpha_t\mathcal{D} \widetilde{\mathcal{G}}^{t, \omega}_{\mathcal{N}} \right) \overline{\mathbf{e}}_{t, \omega} \!+\! \alpha_t \mathcal{D} \overline{\mathbf{b}}_{t, \omega} \right\rVert_2 \!\! + \frac{\alpha_t P X_\omega}{\underline{J}(t+1)^{\tau_3}} + \frac{\alpha_t P W_\omega}{\underline{J}(t+1)^{\frac{1}{2} - \epsilon_W}}.\!\!
	\end{split}
\end{equation}

From the resilience condition~\eqref{eqn: resilienceCondition} and the definition of $ \widetilde{\mathcal{G}}_{\mathcal{N}}^{t, \omega}$ we have $\lambda_{\min} \left( \widetilde{\mathcal{G}}_{\mathcal{N}}^{t, \omega}\right) \!>\! \widehat{K}_{t, \omega} \lambda_{\min} \left( \mathcal{G}_{\mathcal{N}} \right) > \widehat{K}_{t, \omega} \Delta_{\mathcal{A}}$, which, substituting into~\eqref{eqn: avgConv5} means that
\begin{align}\label{eqn: avgConv9}
	\left\lVert \overline{\mathbf{b}}_{t, \omega} \right\rVert_2 <\lambda_{\min} \left( \widetilde{\mathcal{G}}_{\mathcal{N}}^{t, \omega}\right)\left\lVert \overline{\mathbf{e}}_{t, \omega} \right\rVert_2.
\end{align}
As a result of~\eqref{eqn: avgConv9} and Lemma~\ref{lem: main}, there exists a positive definite matrix $\mathcal{G}^*_{t, \omega}$ such that $\mathcal{G}^*_{t, \omega} \overline{\mathbf{e}}_{t, \omega} = \widetilde{\mathcal{G}}_{\mathcal{N}}^{t, \omega} \overline{\mathbf{e}}_{t, \omega} + \overline{\mathbf{b}}_{t, \omega}$ with a minimum eigenvalue that satisfies
\begin{equation}\label{eqn: avgConv10}
	\lambda_{\min} \left(\mathcal{G}^*_{t, \omega} \right) \geq \lambda_{\min} \left(\widetilde{\mathcal{G}}_{\mathcal{N}}^{t, \omega} \right) - \widehat{K}_{t, \omega} \Delta_{\mathcal{A}} > \widehat{K}_{t, \omega}\kappa, 
\end{equation}
where, by definition~\eqref{eqn: kappaDef}, $\kappa = \lambda_{\min} \left( \mathcal{G}_{\mathcal{N}} \right) - \Delta_{\mathcal{A}}$. Substituting for $\mathcal{G}^*_{t, \omega} \overline{\mathbf{e}}_{t, \omega} = \widetilde{\mathcal{G}}_{\mathcal{N}}^{t, \omega} \overline{\mathbf{e}}_{t, \omega} + \overline{\mathbf{b}}_{t, \omega}$ in~\eqref{eqn: avgConv8} and using the fact that the matrix $\mathcal{D} \mathcal{G}_{t, \omega}^*$ is similar to the matrix $\mathcal{D}^{\frac{1}{2}} \mathcal{G}_{t, \omega}^* \mathcal{D}^{\frac{1}{2}}$, we have
\begin{equation}\label{eqn: avgConv11}
	\left\lVert \overline{\mathbf{e}}_{t+1, \omega} \right\rVert_2  \leq \left(1 - \frac{\alpha_t \widehat{K}_{t, \omega} \kappa}{\overline{J}} \right)\left\lVert \overline{\mathbf{e}}_{t, \omega} \right\rVert_2 + \frac{\alpha_t P R_\omega}{\underline{J}(t+1)^{\tau_4}},
\end{equation}
where $\tau_4 = \min\left(\tau_3, \frac{1}{2} - \epsilon_W \right)$, $R_\omega = X_\omega + W_\omega$, and $\overline{J}$ and $\underline{J}$ are the largest and smallest sizes, respectively, of the sets $\mathcal{J}_m$. By definition of $\widehat{K}_{t, \omega}$, we have \begin{equation}\label{eqn: avgConv12} \widehat{K}_{t, \omega} > {\Gamma}\left({ (t+1)^{\tau_\gamma} \left(\left\lVert \overline{\mathbf{e}}_{t, \omega} \right\rVert_2 + R_\omega \right) }\right)^{-1}. \end{equation}
Substituting for~\eqref{eqn: avgConv12} in its right hand side, we see that~\eqref{eqn: avgConv11} falls under the purview of Lemma~\ref{lem: timeVaryingSystem1} in the appendix, which means that
\begin{equation}\label{eqn: avgConv13}
	\lim_{t \rightarrow \infty} \left( t+1 \right)^{\tau_0} \left\lVert \overline{\mathbf{e}}_{t, \omega} \right\rVert_2 = 0,
\end{equation}
for every $0 \leq \tau_0 < \tau_4 - \tau_{\gamma} = \min \left( \tau_3 - \tau_\gamma, \frac{1}{2} - \epsilon_W - \tau_\gamma \right)$. Recall that, by definition, $\tau_3 = \tau_{\gamma} + \tau_1 - \tau_2 - \epsilon_X$ and $\tau_{\gamma} <\min \left(\frac{1}{2}, \tau_1 - \tau_2, 1-\tau_1\right) \leq \min \left(\frac{1}{2}, \tau_1 - \tau_2\right)$. Taking $\epsilon_X > 0 $ and $\epsilon_W > 0$ to be arbitrarily small, ~\eqref{eqn: avgConv13} holds for every $0 \leq \tau_0 < \min\left(\tau_\gamma, \frac{1}{2} - \tau_{\gamma}\right)$. The set of sample paths $\Omega^'$ for which the above analysis holds has probability measure $1$, yielding the desired result~\eqref{eqn: averageConv}.
\end{proof}

\subsection{Proof of Theorem~\ref{thm: main}}
\begin{proof}
From the triangle inequality, we have
\begin{align}\label{eqn: mainTriangle}
	\left\lVert x_n(t) - \theta^*_{\mathcal{I}_n} \right\rVert_2 & = \left\lVert Q_n \Big( \widetilde{x}_n(t) - \theta^* \Big) \right\rVert_2 \leq \left\lVert \mathcal{Q} \widehat{\mathbf{x}}_t \right\rVert_2 + \left\lVert \overline{\mathbf{e}}_t \right\rVert_2.
\end{align}
Lemmas~\ref{lem: resilientConsensus} and~\ref{lem: avg2} state that, if $\lambda_{\min} \left( \mathcal{G}_{\mathcal{N}} \right) > \Delta_{\mathcal{A}}$ (resilience condition~\eqref{eqn: resilienceCondition}), then
	$\mathbb{P}\Big( \lim_{t \rightarrow \infty} \left(t+1\right)^{\tau_3} \left\lVert \mathcal{Q} \widehat{\mathbf{x}}_t \right\rVert_2 = 0 \Big) = 1,$ and 
$\mathbb{P} \Big( \lim_{t \rightarrow \infty} (t+1)^{\tau_0} \left\lVert \overline{\mathbf{e}}_t \right \rVert_2 = 0 \Big) = 1,$ 
for every $0 \leq \tau_3 < \tau_\gamma + \tau_1 - \tau_2$ and every $0 \leq \tau_0 < \min\left(\tau_\gamma, \frac{1}{2} - \tau_{\gamma} \right)$. Combining these relations with~\eqref{eqn: mainTriangle} yields the desired result: for every $0 \leq \tau_0 < \min\left(\tau_\gamma, \frac{1}{2} - \tau_\gamma\right),$
\begin{equation}
	\mathbb{P} \left( \lim_{t \rightarrow \infty} \left(t+1 \right)^{\tau_0} \left\lVert x_n(t) - \theta^*_{\mathcal{I}_n} \right\rVert_2 = 0 \right) = 1.
\end{equation}
\end{proof}

\section{Numerical Examples}\label{sect: examples}
For numerical simulation, we consider a network of $625$ robots or agents (depicted in Figure~\ref{fig: network}) sensing an unknown two dimensional environment.\footnote{\color{black} Code for the numerical simulation is found at https://github.com/ychen824/SAFE.} {\color{black} The communication network is a two dimensional (25 node by 25 node) mesh lattice network.} We represent the environment as a $230$ unit by $230$ unit grid. {\color{black}Each location in the grid is represented by the tuple $(i, j)$, $i, j = 0, \dots, 229$. Let
\begin{equation}
	\mathcal{E} = \left\{(i, j) \vert i, j \in \left\{0, 1, \dots, 299 \right\} \right\}
\end{equation}
be the set of all location coordinates in the environment.} We assign each unit square {\color{black} $(i, j)$} in the environment a state value, which takes values on the interval $[0, 255]$. The state value represents, for example, the occupancy of a particular location: a value of $0$ may represent a location that is free of obstacles, while a value of $255$ may represent a location that is occupied by an impassable obstacle. The field parameter $\theta^*$ is the collection of state values for each of the grid locations. {\color{black} The state value of location $(i, j)$ is found at the $m_{(i, j)}^{\text{th}}$ component of $\theta^*$, where 
\begin{equation}\label{eqn: coordinateToComponent}
	m_{(i, j)} = 230\cdot i + j.
\end{equation}} The dimension of $\theta^*$ is $52,900$. 
\begin{figure}[h!]
	\centering
	\includegraphics[width = 0.5\columnwidth]{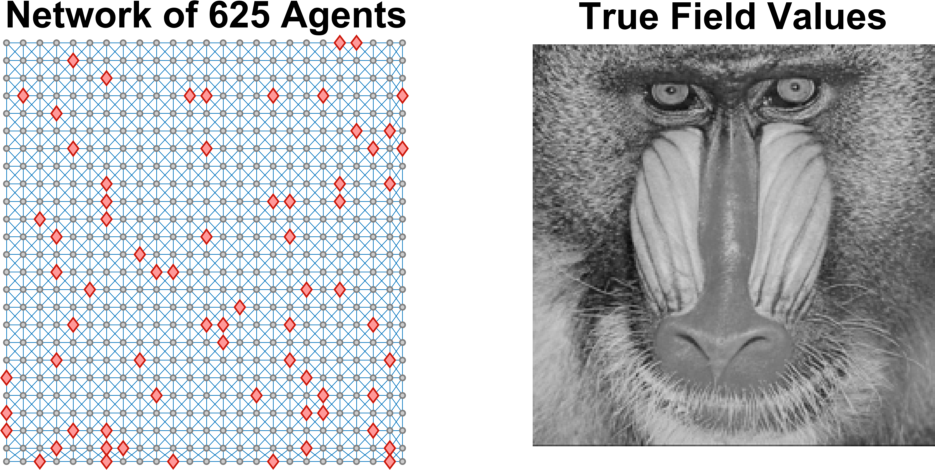}
	\caption{A team of $625$ agents, connected by a mesh communication network, (left) measures $\theta^*$, the state values of a $230$ by $230$ two dimensional grid environment, represented by the image of the baboon (right). Each pixel of the image represents the state value of a single location. The red diamonds represent agents with compromised measuremnt streams.}\label{fig: network}
\end{figure}
We represent the field $\theta^*$ using the $230$ pixel by $230$ pixel image of the baboon in Figure~\ref{fig: network}, where the intensity of each pixel represents the state value of a particular location.

{\color{black}The agents are placed uniformly throughout the environment. We place each agent $n$ ($n = 1, 2, \dots, N$) into the square $(i_n, j_n)$, where
\begin{equation}\label{eqn: agentPlacement}
	\begin{split}
		i_n &= \left\lfloor \frac{n-1}{230} \right\rfloor, \: j_n = \text{mod} \left(n-1, 230 \right).
	\end{split}
\end{equation} 
In~\eqref{eqn: agentPlacement}, the floor operator $\left \lfloor x \right\rfloor$ computes the largest integer smaller than or equal to $x$, and the modulo operator $\text{mod}(x, y)$ computes the remainder of dividing $x$ by $y$. }Each agent (robot) measures the state values of all locations in a $29$ unit by $29$ unit square, centered at its own location. {\color{black}That is, each agent $n$ measures the state value of all locations $(i, j)$ belonging to the set
\begin{equation}
	\widetilde{\mathcal{E}}_n =  \left\{(i, j) \in \mathcal{E} \Big\vert \left\lvert i - i_n \right\rvert \leq 29, \: \left\lvert j - j_n \right\rvert \leq 29 \right\}.
\end{equation} 
Equation~\eqref{eqn: coordinateToComponent} provides a one-to-one mapping between coordinates $(i, j) \in \mathcal{E}$ and field components $m \in \left\{1, \dots, M \right\}$. For a coordinate $(i, j)$, let $e^\intercal_{m_{(i, j)}}$, be the $m_{(i, j)}^{\text{th}}$ canoncial row vector of $\mathbb{R}^M$. Then, following~\eqref{eqn: noAttackMeasurement}, the measurement of agent $n$ (without attacks) is, 
\begin{equation}
	y_n(t) = H_n \theta^* + w_n(t),
\end{equation}
where the measurement matrix $H_n$ stacks all of the row vectors $e^\intercal_{m_{(i, j)}}$ for every $(i, j) \in\widetilde{ \mathcal{E}}_n.$ The physical coupling set of agent $n$ is
\begin{equation}\label{eqn: exampleCoupling}
	\widetilde{\mathcal{I}}_n = \left\{ m_{(i, j)} \Big\vert (i, j) \in \widetilde{\mathcal{E}}_n \right\},
\end{equation}
where, recall, from~\eqref{eqn: coordinateToComponent}, $m_{(i, j)} = 230 \cdot i + j$.}
For every agent, each of its scalar measurement streams is affected by i.i.d. (in time) Gaussian measurement noise with mean $0$ and variance $\sigma^2 = 50$. The noise is independent across measurement streams.


Each agent is interested in the state values of all locations in a $57$ unit by $57$ unit square, centered at its own location, {\color{black} i.e., each agent $n$ is interested in estimating the state values of all locations in the set 
\begin{equation}
	\mathcal{E}_n = \left\{(i, j) \in \mathcal{E} \Big\vert \left\lvert i - i_n \right\rvert \leq 57, \: \left\lvert j - j_n \right\rvert \leq 57 \right\}.
\end{equation}
The interest of agent $n$ is
\begin{equation}\label{eqn: exampleInterestSet}
	\mathcal{I}_n = \left\{ m_{(i, j)} \Big\vert (i, j) \in {\mathcal{E}}_n \right\}.
\end{equation}}Note that each agent is interested in components of the field $\theta^*$ that it does not directly measure{\color{black}, i.e., there exist components $m \in \mathcal{I}_n$ that do not belong to $\widetilde{\mathcal{I}}_n$. Thus, agents must rely on communication with neighbors to estimate their components of interest.} An adversary compromises all of the measurement streams of $70$ agents ($11.2\%$ of the agents), chosen uniformly at random without replacement, shown as the red diamonds in Figure~\ref{fig: network}. The adversary changes the value of each measurement stream under attack ($p \in \mathcal{A}$) to $y^{(p)}(t) = 255$. 

We compare the performance, averaged over $100$ trials, of $\textbf{SAFE}$ against the performance of $\textbf{CIRFE}$, the distributed field estimator from~\cite{SahuRandomFields}, which did not consider measurement attacks. We use the following weights: $a = 1, b = 0.0839, \tau_1 = 0.26, \tau_2 = 0.001, \Gamma = 40, \tau_\gamma = 0.25.$
\begin{figure}
	\centering
	\begin{subfigure}[h]{0.45\textwidth}
	\includegraphics[width = \columnwidth]{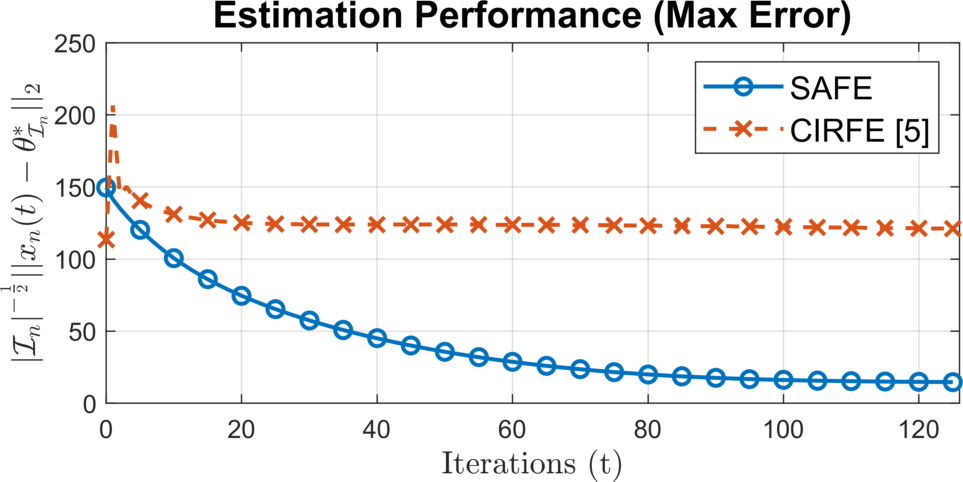}
	\caption{}\label{fig: performance1}
	\end{subfigure}
\hspace{.05\textwidth}
\begin{subfigure}[h ]{0.45 \textwidth}
	\centering
	\includegraphics[width = \columnwidth]{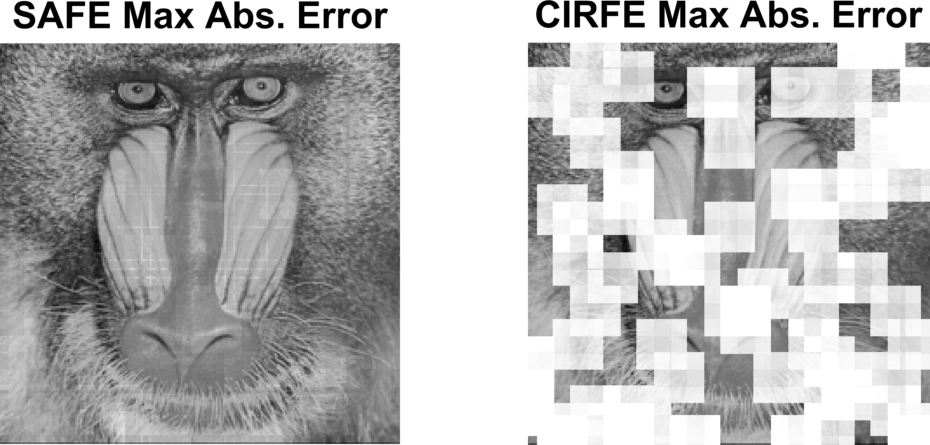}
	\caption{}\label{fig: performance2}
	\end{subfigure}
	\caption{(a) Evolution of the maximum root mean square error (RMSE) over all agents, normalized by square root of the size of the agent's interest set, for \textbf{SAFE} and \textbf{CIRFE}~\cite{SahuRandomFields}. (b) Estimation results for \textbf{SAFE} (left) and \textbf{CIRFE}~\cite{SahuRandomFields} (right) under measurement attacks.}
\end{figure}
Figure~\ref{fig: performance1} shows the evolution of the maximum estimation root mean squared error (RMSE) over all agents, normalized by the square root of the size of each agent's interest set, for \textbf{SAFE} and \textbf{CIRFE}, the estimator from~\cite{SahuRandomFields}. Under \textbf{SAFE}, the agents' local estimation error converges to zero, even when there is an adversary. In contrast, under the estimator from~\cite{SahuRandomFields}, the adversary causes a significant error in some of the agents' estimates. Figure~\ref{fig: performance2} shows the reconstructed field from \textbf{SAFE} and the estimator from~\cite{SahuRandomFields}, where, for each component of the field (i.e., for each pixel in the image), we take the worst estimate among all agents intersted in that component. Similar to Figure~\ref{fig: performance1}, Figure~\ref{fig: performance2} shows that \textbf{SAFE} allows the agents consistently estimate the components of the field in which they are interested while under adversarial attack. The same adversarial attacks induce significant errors in the estimation results of agents following \textbf{CIRFE}, an estimator that does not have proper countermeasures against measurement attacks.

\section{Conclusion}\label{sect: conclusion}
In this paper, we have studied resilient distributed field estimation under measurement attacks. A team of agents or devices, connected by a cyber communication network, makes measurements of a large, spatially distributed physical field, and each agent processes the measurements to recover specific components of the field. For example, in multi-robot navigation, a team of connected robots takes measurements of an unknown environment and each robot processes its measurements to estimate its local surroudings. An adversary arbitrarily manipulates a subset of the measurements. To deal with the measurement attacks, this paper presented \textbf{SAFE}, the Saturating Adaptive Field Estimator, a resilient consensus+innovations distributed field estimator. Under \textbf{SAFE}, each agent applies an adaptive gain to its innovations component to saturate its magnitude at a time-decaying threshold. As long as the subnetwork of agents interested in each component of the field is connected on average, and there is enough measurement redundancy in the noncompromised measurement streams, then, \textbf{SAFE} guarantees that all of the agents' local estimates converge almost surely true value of the components of the field in which they are interested. Finally, we illustrated the performance of \textbf{SAFE} through numerical examples.

\section{Appendix}
We require intermediate results from~\cite{ChenDistributed1, ChenSAGE, Schur} to prove Theorem~\ref{thm: main}. {\color{black}For completeness, we provide proof sketches for Lemmas \ref{lem: consensus} and~\ref{lem: timeVaryingSystem1}, originally from~\cite{ChenSAGE}.}

\subsection{Consensus Analysis}
We use the following result, a consequence of Lemma~1 in~\cite{ChenSAGE} to study the convergence of the agents' estimates to the generalized network average estimate. 
\begin{lemma}[Lemma 1 in~\cite{ChenSAGE}]\label{lem: consensus}
	Define the consensus subspace
\begin{equation}\label{eqn: consensusSubspace}
	\mathcal{C} = \left\{ w \in \mathbb{R}^{NM} \vert w = \mathbf{1}_N \otimes v , v \in \mathbb{R}^M \right\},
\end{equation}
and let $\mathcal{C}^\perp$ be the orthogonal complement of $\mathcal{C}$. Let $\widehat{\mathbf{w}}_t \in \mathcal{C}^\perp$ be $\mathcal{F}_t$-adapted and evolve according to
\begin{equation}\label{eqn: wHatEvolution}
	\widehat{\mathbf{w}}_{t+1}= \left(I_{NM} - P_{NM} - r_2(t) L(t) \right) \widehat{\mathbf{w}}_t + r_1(t) \mathbf{v}_t,
\end{equation}
where:
\begin{enumerate}
	\item $P_{NM} = \frac{1}{N} \left(\mathbf{1}_N \mathbf{1}_N^\intercal \right) \otimes I_M$,
	\item the sequences $\left\{ r_1(t) \right\}$ and $\left\{ r_2(t) \right\}$ follow
		$r_1(t) = \frac{c_1}{(t+1)^{\delta_1}}, \: r_2(t) = \frac{c_2}{(t+1)^{\delta_2}},$
	with $c_1, c_2 > 0$, $0 < \delta_2 < \delta_1$, and $\delta_2 < 1$. 
	\item the term $\mathbf{v}_t$ satisfies
		$\left\lVert \mathbf{v}_t \right\rVert_2 \leq \frac{c_3}{(t+1)^{\delta_3}},$
	with $0 < \delta_3 < \min\left(\frac{1}{2}, \delta_1 - \delta_2 \right)$ and $c_3 > 0$, and
	\item the sequence $\left\{ L(t) \right\}$ is an $\mathcal{F}_{t+1}$-adapted, i.i.d. sequence of (undirected) graph Laplacian matrices that satisfies $\lambda_2 \left(\mathbb{E} \left[ L(t) \right] \right) > 0$. 
\end{enumerate}
Then, for every $0 \leq \delta < \delta_3 + \delta_1 - \delta_2$, $\widehat{\mathbf{w}}_t$ satisfies
\begin{equation}
	\mathbb{P} \left(\lim_{t \rightarrow \infty} \left(t+1\right)^\delta \left\lVert \widehat{\mathbf{w}}_t \right\rVert_2 = 0 \right) = 0.
\end{equation}

\end{lemma}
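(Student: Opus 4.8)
The displayed conclusion $\mathbb{P}\!\left(\lim_{t\to\infty}(t+1)^\delta\|\widehat{\mathbf{w}}_t\|_2=0\right)=0$ must be a typographical slip for $\mathbb{P}(\cdots)=1$: the event that the scaled disagreement decays to zero is exactly the almost-sure decay that the paper invokes downstream, since Lemma~\ref{lem: resilientConsensus} applies this lemma to obtain~\eqref{eqn: resConsensus1}, a \emph{probability-one} statement. A probability-zero conclusion here would be both false (the homogeneous dynamics contract on $\mathcal{C}^\perp$ while the forcing decays) and useless downstream. I therefore read the final statement as the assertion of almost-sure decay and sketch the proof of the corrected claim $\mathbb{P}\!\left(\lim_{t\to\infty}(t+1)^\delta\|\widehat{\mathbf{w}}_t\|_2=0\right)=1$ for every $0\le\delta<\delta_3+\delta_1-\delta_2$.

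The plan is to treat~\eqref{eqn: wHatEvolution} as a stochastic-approximation recursion confined to $\mathcal{C}^\perp$ and to extract a sharp decay rate from the average connectivity $\lambda_2(\mathbb{E}[L(t)])>0$. First I would use that $\widehat{\mathbf{w}}_t\in\mathcal{C}^\perp$ and that $P_{NM}$ is the orthogonal projector onto $\mathcal{C}$: the factor $I_{NM}-P_{NM}$ acts as the identity on the iterate, so the homogeneous operator reduces to $I_{NM}-r_2(t)L(t)$ acting on $\mathcal{C}^\perp$, a subspace that $L(t)$ leaves invariant and on which its spectrum is bounded below by $\lambda_2(L(t))$. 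Squaring and conditioning on $\mathcal{F}_t$, and using that $\{L(t)\}$ is i.i.d.\ and independent of $\mathcal{F}_t$, the contraction furnished by $\lambda_2(\mathbb{E}[L(t)])>0$ together with the uniform boundedness of the Laplacian spectra yields, on $\mathcal{C}^\perp$, a second-moment bound of the form $\mathbb{E}\!\left[\|(I_{NM}-r_2(t)L(t))\widehat{\mathbf{w}}_t\|_2^2\mid\mathcal{F}_t\right]\le\left(1-\tfrac{c}{(t+1)^{\delta_2}}\right)\|\widehat{\mathbf{w}}_t\|_2^2$ for some $c>0$ and all large $t$. Folding in the forcing via the deterministic bound $\|r_1(t)\mathbf{v}_t\|_2\le c_1c_3(t+1)^{-(\delta_1+\delta_3)}$ and splitting the cross term by $2ab\le\eta a^2+\eta^{-1}b^2$ (absorbing the $\|\widehat{\mathbf{w}}_t\|_2^2$ piece into the contraction), I obtain a Robbins--Siegmund-type recursion
\begin{equation*}
	\mathbb{E}\!\left[\|\widehat{\mathbf{w}}_{t+1}\|_2^2 \mid \mathcal{F}_t\right]\le\left(1-\frac{c'}{(t+1)^{\delta_2}}\right)\|\widehat{\mathbf{w}}_t\|_2^2+\frac{C}{(t+1)^{2(\delta_1+\delta_3)-\delta_2}},
\end{equation*}
where $\sum_t(t+1)^{-\delta_2}=\infty$ since $\delta_2<1$.

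Finally I would feed this inequality into the scalar stochastic-approximation comparison lemma used throughout~\cite{ChenSAGE, Kar3} (the appendix counterpart Lemma~\ref{lem: timeVaryingSystem1}): a nonnegative $\mathcal{F}_t$-adapted process obeying $u_{t+1}\le(1-a(t+1)^{-\delta_2})u_t+b(t+1)^{-p}$ decays a.s.\ at every rate strictly below $p-\delta_2$. With $p=2(\delta_1+\delta_3)-\delta_2$ this gives $(t+1)^{2\delta}\|\widehat{\mathbf{w}}_t\|_2^2\to0$ a.s.\ for every $\delta<\delta_1+\delta_3-\delta_2$, hence $(t+1)^\delta\|\widehat{\mathbf{w}}_t\|_2\to0$ a.s.\ on the same range, which is the claim. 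The main obstacle is passing from the expectation-level contraction supplied by $\lambda_2(\mathbb{E}[L(t)])>0$ to a \emph{pathwise} rate under the randomly switching, only-connected-on-average Laplacians $\{L(t)\}$; this is precisely the step that requires the comparison lemma, and it is also where the exponent bookkeeping (tracking how the cross term degrades the forcing rate to $2(\delta_1+\delta_3)-\delta_2$) must be carried out carefully to recover the sharp threshold $\delta_3+\delta_1-\delta_2$.
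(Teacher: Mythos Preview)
Your reading of the typo is correct, and your overall strategy---reduce to $\mathcal{C}^\perp$, extract contraction from $\lambda_2(\mathbb{E}[L(t)])>0$, feed into a scalar comparison lemma---is the right shape. However, the paper's argument takes a different and somewhat simpler route, and your final step has a mismatch worth flagging.

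The paper works with $\|\widehat{\mathbf{w}}_t\|_2$ directly rather than its square. After observing $P_{NM}\widehat{\mathbf{w}}_t=0$, it applies the triangle inequality to obtain
\[
\|\widehat{\mathbf{w}}_{t+1}\|_2 \le \|(I_{NM}-r_2(t)L(t))\widehat{\mathbf{w}}_t\|_2 + \frac{c_1c_3}{(t+1)^{\delta_1+\delta_3}}.
\]
For the homogeneous term it invokes Lemma~\ref{lem: randomLaplacians} (Lemma~4.4 in~\cite{Kar4}), which produces a \emph{pathwise} multiplicative bound $\|(I_{NM}-r_2(t)L(t))\widehat{\mathbf{w}}_t\|_2\le(1-r(t))\|\widehat{\mathbf{w}}_t\|_2$ for an $\mathcal{F}_{t+1}$-adapted process $r(t)\in[0,1]$ with $\mathbb{E}[r(t)\mid\mathcal{F}_t]\ge c_r(t+1)^{-\delta_2}$. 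The resulting scalar recursion $\|\widehat{\mathbf{w}}_{t+1}\|_2\le(1-r(t))\|\widehat{\mathbf{w}}_t\|_2+c_1c_3(t+1)^{-(\delta_1+\delta_3)}$ is then handled by Lemma~\ref{lem: randomDynamicalSystem} (Lemma~4.2 in~\cite{Kar4}), which is tailored exactly to this pathwise form with a random contraction coefficient. This avoids squaring, cross terms, and the exponent loss and recovery you perform.

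Your squared-norm, conditional-expectation approach is a legitimate alternative and your exponent bookkeeping is correct, but the comparison lemma you cite is the wrong one. Lemma~\ref{lem: timeVaryingSystem1} is a \emph{deterministic} result for recursions of the specific form $w_{t+1}=(1-r_1(t)c_3/((|w_t|+\widehat{c}_3)(t+1)^{\delta_3}))w_t+\cdots$; it does not apply to your supermartingale-type inequality $\mathbb{E}[u_{t+1}\mid\mathcal{F}_t]\le(1-c'(t+1)^{-\delta_2})u_t+C(t+1)^{-p}$. Nor does Lemma~\ref{lem: randomDynamicalSystem} apply directly, since it requires a pathwise inequality with a random $r(t)$, not a conditional-expectation bound with deterministic coefficients. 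To close your argument you would need a separate supermartingale-rate lemma (these exist in the~\cite{Kar4} family but are not among the lemmas stated in this appendix), and Robbins--Siegmund alone gives convergence without the sharp rate. The paper's route via Lemmas~\ref{lem: randomLaplacians} and~\ref{lem: randomDynamicalSystem} sidesteps this issue entirely.
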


{\color{black} The proof sketch of Lemma~\ref{lem: consensus} requires two results from~\cite{Kar4}. The following result (Lemma 4.4 in~\cite{Kar4}) characterizes the effect of random, time-varying Laplacians on the evolution of $\widehat{\mathbf{w}}_t$ in~\eqref{eqn: wHatEvolution}. 
\begin{lemma}[Lemma 4.4 in~\cite{Kar4}]\label{lem: randomLaplacians}
	Consider $\widehat{\mathbf{w}}_t$ defined in~\eqref{eqn: wHatEvolution}, subject to the conditions 1. -- 4. stated in Lemma~\ref{lem: consensus}. There exists a measurement $\mathcal{F}_{t+1}$-adapted process $\left\{ r(t) \right\}$ and a constant $c_r > $ such that, for $t$ large enough,
\begin{align}
	0 \leq r(t) &\leq 1, \text{ a.s.}, \label{eqn: randLaplacianConditions1}\\
	\frac{c_r}{(t+1)^{\delta_2}} &\leq \mathbb{E} \left[ r(t) \vert \mathcal{F}_{t} \right], \label{eqn: randLaplacianConditions2}\\
	\left\lVert \left(I_{NM} - r_2(t) L(t) \right) \widehat{\mathbf{w}}_t \right\rVert_2 &\leq \left(1- r(t)\right) \left\lVert \widehat{\mathbf{w}}_t \right\rVert_2, \text{ a.s.},
\end{align}
where the sequence $\left\{ r_2(t) \right\}$ and the constant $\delta_2$ satisfy condition 2. from Lemma~\ref{lem: consensus}. 
\end{lemma}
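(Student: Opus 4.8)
The plan is to \emph{define} the scalar process $\{r(t)\}$ directly as the pathwise contraction ratio of the map $\widehat{\mathbf{w}}\mapsto (I_{NM}-r_2(t)L(t))\widehat{\mathbf{w}}$ evaluated at $\widehat{\mathbf{w}}_t$, so that the third displayed inequality of the lemma holds by construction, and then to put all the work into the conditional lower bound. Concretely, on $\{\widehat{\mathbf{w}}_t\neq 0\}$ I would set
\[
 r(t) \;=\; 1 - \frac{\bigl\lVert (I_{NM}-r_2(t)L(t))\,\widehat{\mathbf{w}}_t \bigr\rVert_2}{\lVert \widehat{\mathbf{w}}_t \rVert_2},
\]
and on $\{\widehat{\mathbf{w}}_t=0\}$ set $r(t)=c_r(t+1)^{-\delta_2}$ with $c_r$ chosen below (harmless, since the norm inequality is vacuous there). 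Since $\widehat{\mathbf{w}}_t$ is $\mathcal{F}_t$-measurable and $L(t)$ is $\mathcal{F}_{t+1}$-measurable, $r(t)$ is $\mathcal{F}_{t+1}$-adapted. For the range $0\le r(t)\le 1$: Laplacians on $N$ vertices have $\lambda_{\max}(L(t))$ bounded by a constant $c_L$ depending only on $N$, so once $t$ is large enough that $r_2(t)c_L\le 1$ every eigenvalue of $I_{NM}-r_2(t)L(t)$ lies in $[0,1]$, whence $\lVert (I_{NM}-r_2(t)L(t))\widehat{\mathbf{w}}_t\rVert_2\le\lVert\widehat{\mathbf{w}}_t\rVert_2$ and $r(t)\in[0,1]$.

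The crux is the bound on $\mathbb{E}[r(t)\mid\mathcal{F}_t]$. On $\{\widehat{\mathbf{w}}_t\neq 0\}$ write $\rho_t=\widehat{\mathbf{w}}_t^\intercal L(t)\widehat{\mathbf{w}}_t/\lVert\widehat{\mathbf{w}}_t\rVert_2^2\ge 0$ and expand the square:
\[
 (1-r(t))^2 = \frac{\lVert (I_{NM}-r_2(t)L(t))\widehat{\mathbf{w}}_t\rVert_2^2}{\lVert\widehat{\mathbf{w}}_t\rVert_2^2} = 1 - 2r_2(t)\rho_t + r_2(t)^2\,\frac{\lVert L(t)\widehat{\mathbf{w}}_t\rVert_2^2}{\lVert\widehat{\mathbf{w}}_t\rVert_2^2}.
\]
Using $L(t)\succeq 0$ one has $\lVert L(t)\widehat{\mathbf{w}}_t\rVert_2^2\le\lambda_{\max}(L(t))\,\widehat{\mathbf{w}}_t^\intercal L(t)\widehat{\mathbf{w}}_t\le c_L\rho_t\lVert\widehat{\mathbf{w}}_t\rVert_2^2$, so for $t$ with $r_2(t)c_L\le 1$,
\[
 1-(1-r(t))^2 \;\ge\; 2r_2(t)\rho_t - r_2(t)^2 c_L\rho_t \;\ge\; r_2(t)\rho_t,
\]
and since $0\le r(t)\le 1$ gives $1-(1-r(t))^2=r(t)(2-r(t))\le 2r(t)$, we get $r(t)\ge\tfrac12 r_2(t)\rho_t$ on $\{\widehat{\mathbf{w}}_t\neq 0\}$. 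Taking $\mathbb{E}[\,\cdot\mid\mathcal{F}_t]$, and using that $L(t)$ is independent of $\mathcal{F}_t$ while $\widehat{\mathbf{w}}_t$ is $\mathcal{F}_t$-measurable,
\[
 \mathbb{E}\bigl[r(t)\mid\mathcal{F}_t\bigr] \;\ge\; \frac{r_2(t)}{2}\cdot\frac{\widehat{\mathbf{w}}_t^\intercal\,\mathbb{E}[L(t)]\,\widehat{\mathbf{w}}_t}{\lVert\widehat{\mathbf{w}}_t\rVert_2^2} \;\ge\; \frac{r_2(t)}{2}\,\lambda_2\bigl(\mathbb{E}[L(t)]\bigr),
\]
the last step using $\widehat{\mathbf{w}}_t\in\mathcal{C}^\perp$ and the fact that under condition 4 the null space of $\mathbb{E}[L(t)]$ is exactly $\mathcal{C}$, so the Rayleigh quotient over $\mathcal{C}^\perp$ is at least $\lambda_2(\mathbb{E}[L(t)])$. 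With $c_r=\tfrac12 c_2\lambda_2(\mathbb{E}[L(t)])$ and $r_2(t)=c_2(t+1)^{-\delta_2}$ this is $\mathbb{E}[r(t)\mid\mathcal{F}_t]\ge c_r(t+1)^{-\delta_2}$; on $\{\widehat{\mathbf{w}}_t=0\}$ the same bound holds by the choice of $r(t)$ there, which is $\le 1$ for $t$ large.

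The main obstacle is precisely this last estimate. A pathwise argument fails: a realized graph $G(t)$ may be disconnected, so $\lambda_2(L(t))=0$ and $I_{NM}-r_2(t)L(t)$ is not a strict contraction on $\mathcal{C}^\perp$ for that $t$, hence no deterministic per-step improvement exists. The remedy is to keep the full quadratic expansion, dominate the second-order term $r_2(t)^2\lVert L(t)\widehat{\mathbf{w}}_t\rVert_2^2$ by the first-order term $r_2(t)\,\widehat{\mathbf{w}}_t^\intercal L(t)\widehat{\mathbf{w}}_t$ (possible since $r_2(t)\to 0$ and $\lambda_{\max}(L(t))$ is uniformly bounded), and only then take conditional expectations, where the average connectivity $\lambda_2(\mathbb{E}[L(t)])>0$ finally enters.
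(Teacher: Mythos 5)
Your proposal is correct. Note that the paper itself does not prove this lemma at all---it is imported verbatim as Lemma 4.4 of the cited reference [Kar4] and used as a black box in the proof sketch of Lemma~\ref{lem: consensus}---so there is no in-paper argument to compare against; your write-up supplies a self-contained proof, and it follows essentially the standard route of the cited source: define $r(t)$ as the realized contraction ratio (so the norm inequality holds by construction and adaptedness to $\mathcal{F}_{t+1}$ is immediate), expand the square, dominate the second-order term $r_2(t)^2\lVert L(t)\widehat{\mathbf{w}}_t\rVert_2^2 \le \lambda_{\max}(L(t))\,r_2(t)^2\,\widehat{\mathbf{w}}_t^\intercal L(t)\widehat{\mathbf{w}}_t$ using the uniform bound on $\lambda_{\max}$ of Laplacians on a fixed vertex set, and only then take $\mathbb{E}[\,\cdot\mid\mathcal{F}_t]$ so that mean connectivity $\lambda_2(\mathbb{E}[L(t)])>0$ enters through the Rayleigh quotient; your closing remark correctly identifies why a pathwise contraction argument cannot work when individual realizations $G(t)$ are disconnected. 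One small wording point: the claim that the null space of $\mathbb{E}[L(t)]$ is \emph{exactly} $\mathcal{C}$ relies on reading $L(t)$ as the Kronecker form $\mathcal{L}(t)\otimes I_M$ of an $N$-vertex Laplacian; but your inequality only needs $\mathcal{C}^\perp$ to be orthogonal to the kernel of $\mathbb{E}[L(t)]$, which holds in either reading since that kernel always contains the all-ones direction and is contained in $\mathcal{C}$ under condition 4, so the Rayleigh-quotient bound $\widehat{\mathbf{w}}_t^\intercal\,\mathbb{E}[L(t)]\,\widehat{\mathbf{w}}_t \ge \lambda_2(\mathbb{E}[L(t)])\lVert\widehat{\mathbf{w}}_t\rVert_2^2$ on $\mathcal{C}^\perp$ is valid as used. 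The handling of the event $\{\widehat{\mathbf{w}}_t=0\}$ (an $\mathcal{F}_t$-measurable event, on which you set $r(t)$ deterministically) is also sound.
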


The following result from~\cite{Kar4} studies the evolution of scalar dynamical systems of the form
\begin{equation}\label{eqn: randomDynamicalSystem}
	w_{t+1} = \left(1-r(t) \right) w_t + r_1(t),
\end{equation}
where $\left\{ r_1(t) \right\}$ is a deterministic sequence that follows, $r_1(t) = \frac{c_1}{(t+1)^{\delta_1}}$, $\left\{r(t)\right\}$ is $\mathcal{F}_{t+1}$-adapted process satisfying $0 \leq r(t) \leq 1, \text{ a.s.}$ and $\frac{c_r}{(t+1)^{\delta_2}} \leq \mathbb{E} \left[ r(t) \vert \mathcal{F}_{t} \right]$, and the constants $c_1, c_r, \delta_1, \delta_2$ satisfy, $c_1, c_r > 0$, $0 < \delta_2 < \delta_1$, $\delta_2 < 1$.

\begin{lemma}[Lemma 4.2 in~\cite{Kar4}]\label{lem: randomDynamicalSystem}
	The system in~\eqref{eqn: randomDynamicalSystem} satisfies
	\begin{equation}
		\mathbb{P} \left( \lim_{t \rightarrow \infty} (t+1)^{\delta_0} w_t = 0 \right) = 1,
	\end{equation}
	for every $0 \leq \delta_0 < \delta_1 - \delta_2$. 
\end{lemma}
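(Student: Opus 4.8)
The plan is to unroll the recursion into an explicit product-and-sum and then control the random products $\prod(1-r(j))$ pathwise. Iterating \eqref{eqn: randomDynamicalSystem} gives $w_t=\Phi_{0,t}w_0+\sum_{s=0}^{t-1}\Phi_{s+1,t}\,r_1(s)$, where $\Phi_{s,t}:=\prod_{j=s}^{t-1}(1-r(j))$ (with $\Phi_{t,t}:=1$); since $\Phi_{s,t}\ge 0$ and $r_1(s)\ge 0$ we get $|w_t|\le\Phi_{0,t}|w_0|+\sum_{s=0}^{t-1}\Phi_{s+1,t}\,r_1(s)$, and the same bound holds in the (inequality, $w_t\ge0$) form in which the lemma is typically applied. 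So it suffices to bound this deterministic-looking expression, and I would in fact show that $|w_t|$ decays at a polynomial rate arbitrarily close to $(t+1)^{-(\delta_1-\delta_2)}$, which yields $(t+1)^{\delta_0}w_t\to0$ a.s.\ for every $0\le\delta_0<\delta_1-\delta_2$.

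The central estimate is a pathwise lower bound on the accumulated contraction. From $\log(1-x)\le -x$ we have $\Phi_{s,t}\le\exp\!\big(-\sum_{j=s}^{t-1}r(j)\big)$. The process $M_t:=\sum_{j=0}^{t-1}\big(r(j)-\mathbb{E}[r(j)\mid\mathcal{F}_j]\big)$ is a martingale with increments bounded by $1$ and conditional second moments at most $\mathbb{E}[r(j)\mid\mathcal{F}_j]$, so a strong-law / maximal-inequality argument shows that a.s., up to logarithmic corrections, $M_t-M_s$ is $o\!\big(\sum_{j=s}^{t-1}\mathbb{E}[r(j)\mid\mathcal{F}_j]\big)$, uniformly over those pairs $s<t$ for which this conditional sum is large. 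Combining with $\mathbb{E}[r(j)\mid\mathcal{F}_j]\ge c_r(j+1)^{-\delta_2}$ gives, for $t$ large, $\Phi_{s+1,t}\le\exp\!\big(-\tfrac{c_r}{2}\sum_{j=s+1}^{t-1}(j+1)^{-\delta_2}\big)$ for all $s$ with $t-s$ not too small; in particular, since $\delta_2<1$, $\Phi_{s+1,t}\le\exp(-c\,t^{1-\delta_2})$ for $s\le t/2$ and $\Phi_{s+1,t}\le\exp\!\big(-c\,(t-s)\,t^{-\delta_2}\big)$ for $t/2\le s<t$.

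Given these bounds, the conclusion follows by splitting the sum at $s=\lfloor t/2\rfloor$. The term $\Phi_{0,t}|w_0|$ and the partial sum over $s\le t/2$ are each at most $\exp(-c\,t^{1-\delta_2})$ times a polynomial in $t$, hence are $o$ of every power of $t$. In the remaining range $r_1(s)=O(t^{-\delta_1})$, while $\sum_{t/2\le s<t}\Phi_{s+1,t}\lesssim\sum_{k\ge0}\exp(-c\,k\,t^{-\delta_2})\asymp t^{\delta_2}$; the (up to logs) $O(t^{\delta_2})$ indices $s$ with $t-s$ too small for the product bound are handled trivially via $\Phi_{s+1,t}\le1$ and also contribute only $O(t^{\delta_2-\delta_1})$. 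Altogether $|w_t|=O\!\big(t^{-(\delta_1-\delta_2)}\big)$ up to logarithmic factors, which gives the claim.

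The main obstacle is the uniform-in-$s$ control of $M_t-M_s$: arguing that the martingale fluctuations of $\{M_t\}$ stay negligible compared with $\sum_j\mathbb{E}[r(j)\mid\mathcal{F}_j]$ simultaneously over the relevant windows of starting indices; everything else is deterministic bookkeeping, and this estimate is of the type carried out in~\cite{Kar4}. An essentially equivalent alternative is to rescale to $y_t=(t+1)^{\delta_0}|w_t|$, derive the drift inequality $\mathbb{E}[y_{t+1}\mid\mathcal{F}_t]\le\big(1-\tfrac{c_r}{2(t+1)^{\delta_2}}\big)y_t+\tfrac{c}{(t+1)^{\delta_1-\delta_0}}$ (using $\delta_2<1$ to absorb the $(\tfrac{t+2}{t+1})^{\delta_0}$ factor), and then invoke a stochastic-approximation convergence lemma; note, however, that because the contraction rate is only polynomial, not geometric, a plain Robbins--Siegmund argument does not apply directly and one is led back to essentially the same martingale estimate.
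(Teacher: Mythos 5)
The paper itself contains no proof of this statement: Lemma~\ref{lem: randomDynamicalSystem} is imported verbatim as Lemma 4.2 of~\cite{Kar4} (only Lemmas~\ref{lem: consensus} and~\ref{lem: timeVaryingSystem1} receive proof sketches here), so there is no in-paper argument to compare yours against. Judged on its own terms, your route --- unrolling the recursion, bounding the random products $\Phi_{s,t}=\prod_{j=s}^{t-1}(1-r(j))$ by $\exp\bigl(-\sum_{j=s}^{t-1}r(j)\bigr)$, and then replacing $\sum_j r(j)$ by its conditional compensator uniformly over windows --- is sound, and it even delivers slightly more than the lemma asks for, namely an explicit a.s.\ rate $|w_t|=O\bigl(t^{-(\delta_1-\delta_2)}\log t\bigr)$. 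The deterministic bookkeeping (split at $s=\lfloor t/2\rfloor$, the geometric-type sum $\sum_{k\ge 0}e^{-ck\,t^{-\delta_2}}\asymp t^{\delta_2}$, trivial treatment via $\Phi_{s+1,t}\le 1$ of the $O(t^{\delta_2}\log t)$ short windows) is correct, and the logarithmic losses are harmless because $\delta_0<\delta_1-\delta_2$ is strict. This is a more hands-on argument than the stochastic-approximation--style pathwise analysis in the Kar--Moura line of work from which the lemma is quoted.

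The one step you leave unproved is the load-bearing one, and as phrased (``a strong-law / maximal-inequality argument'') it is not yet a proof: an a.s.\ law of large numbers for the martingale $M_t$ does not by itself give control uniform over all pairs $(s,t)$, and the centering $\Sigma_{s,t}:=\sum_{j=s}^{t-1}\mathbb{E}[r(j)\mid\mathcal{F}_j]$ is itself random, so Azuma/Freedman with a deterministic variance proxy needs care (the predictable variation can be much larger than its lower bound). The clean way to close it: since $r(j)\in[0,1]$, convexity gives $\mathbb{E}\bigl[e^{-\lambda r(j)}\mid\mathcal{F}_j\bigr]\le \exp\bigl(-(1-e^{-\lambda})\,\mathbb{E}[r(j)\mid\mathcal{F}_j]\bigr)$, so $\exp\bigl(-\lambda\sum_{j\le t}r(j)+(1-e^{-\lambda})\sum_{j\le t}\mathbb{E}[r(j)\mid\mathcal{F}_j]\bigr)$ is a supermartingale; Markov then yields, for each pair $s<t$ and suitable constants, $\mathbb{P}\bigl(\sum_{j=s}^{t-1}r(j)\le\tfrac12\Sigma_{s,t},\ \Sigma_{s,t}\ge K\log t\bigr)\le t^{-cK}$. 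A union bound over $s<t$, Borel--Cantelli, and the deterministic lower bound $\Sigma_{s,t}\ge c_r\sum_{j=s}^{t-1}(j+1)^{-\delta_2}$ give exactly the uniform window estimate you invoke, with the windows where $\Sigma_{s,t}<K\log t$ (equivalently $t-s\lesssim t^{\delta_2}\log t$) handled trivially as you already do. With that estimate written out, your outline is a complete and correct proof; without it, it remains a sketch whose probabilistic core is asserted rather than established.
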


For completeness, we now provide the proof sketch for Lemma~\ref{lem: consensus}. The full proof is found in~\cite{ChenSAGE}. 

\begin{proof}[Proof of Lemma~\ref{lem: consensus}]
Consider the dynamics of $\widehat{\mathbf{w}}_t$ (equation~\eqref{eqn: wHatEvolution}). Since, by definition, $\widehat{\mathbf{w}}_t \in \mathcal{C}^\perp$, we have
\begin{equation}\label{eqn: consensusProof1}
	P_{NM} \widehat{\mathbf{w}}_t = 0.
\end{equation}
Then, substituting~\eqref{eqn: consensusProof1} into~\eqref{eqn: wHatEvolution}, taking the $\ell_2$-norm of both sides, and applying the triangle inequality, we have
\begin{equation}\label{eqn: consensusProof2}
	\left\lVert \widehat{\mathbf{w}}_{t+1} \right\rVert_2 \leq \left\lVert \left(I_{NM} - r_2(t) L(t)\right)  \widehat{\mathbf{w}}_t\right\rVert_2 + r_1(t) \left\lVert \mathbf{v}_t \right\rVert_2.
\end{equation}
From conditions 1. and 3. stated in Lemma~\ref{lem: consensus}, we have 
\begin{equation}\label{eqn: consensusProof3}
	r_1(t) \left\lVert \mathbf{v}_t \right\rVert_2 \leq \frac{c_1 c_3}{(t+1)^{\delta_1 + \delta_3}}.
\end{equation}
From Lemma~\ref{lem: randomLaplacians}, we have
\begin{equation}\label{eqn: consensusProof4}
	\left\lVert \left(I_{NM} - r_2(t) L(t) \right) \widehat{\mathbf{w}}_t \right\rVert_2 \leq \left(1- r(t)\right) \left\lVert \widehat{\mathbf{w}}_t \right\rVert_2,
\end{equation}
where $r(t)$ satisfies~\eqref{eqn: randLaplacianConditions1} and~\eqref{eqn: randLaplacianConditions2}. Substituting~\eqref{eqn: consensusProof3} and~\eqref{eqn: consensusProof4} into~\eqref{eqn: consensusProof2} yields
\begin{equation}\label{eqn: consensusProof5}
	\left\lVert \widehat{\mathbf{w}}_{t+1} \right\rVert_2 \leq  \left(1- r(t)\right) \left\lVert \widehat{\mathbf{w}}_t \right\rVert_2 + \frac{c_1 c_3}{(t+1)^{\delta_1 + \delta_3}}.  
\end{equation}
The relation in~\eqref{eqn: consensusProof5} falls under the purview of Lemma~\ref{lem: randomDynamicalSystem}, which yields the desired result.
\end{proof}}

\subsection{Postive Semi-Definite Block Matrices}
We use the following result from~\cite{Schur} to establish the positive semi-definiteness of symmetric matrices.
\begin{lemma}[Proposition 16.1 in~\cite{Schur}]\label{lem: schur}
	Let $\mathcal{M}$ be a symmetric matrix of the form
	$\mathcal{M} = \left[\begin{array}{cc} A & B^\intercal \\ B & C \end{array} \right].$
If $C \succ 0$, then, $\mathcal{M} \succeq 0$ if and only if $A - B^\intercal C^{-1}B \succeq 0$. 
\end{lemma}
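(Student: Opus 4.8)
The plan is to prove the equivalence by exhibiting an explicit congruence transformation that block-diagonalizes $\mathcal{M}$, and then invoking the fact that congruence by an invertible matrix preserves positive semidefiniteness. Since $C \succ 0$, the inverse $C^{-1}$ exists and is symmetric, and one checks by a direct block multiplication that
\begin{equation}\label{eqn: schurCongruence}
	\mathcal{M} = \left[\begin{array}{cc} I & B^\intercal C^{-1} \\ 0 & I \end{array}\right] \left[\begin{array}{cc} A - B^\intercal C^{-1} B & 0 \\ 0 & C \end{array}\right] \left[\begin{array}{cc} I & 0 \\ C^{-1} B & I \end{array}\right].
\end{equation}
The outer factors are unit triangular, hence invertible, and they are transposes of one another (using $(C^{-1}B)^\intercal = B^\intercal C^{-1}$, valid because $C = C^\intercal$). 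Thus $\mathcal{M}$ is congruent to $\blkdiag\!\left(A - B^\intercal C^{-1} B,\, C\right)$.

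Next I would use the standard fact that for invertible $S$ the map $z \mapsto S^{-\intercal} z$ is a bijection of Euclidean space, so writing $\mathcal{M} = S^\intercal D S$ with $S$ the lower-triangular factor in~\eqref{eqn: schurCongruence} and $D = \blkdiag(A - B^\intercal C^{-1}B, C)$, the quadratic form $z^\intercal \mathcal{M} z = (Sz)^\intercal D (Sz)$ takes exactly the same set of values as $w^\intercal D w$. Hence $\mathcal{M} \succeq 0$ if and only if $D \succeq 0$. A symmetric block-diagonal matrix is positive semidefinite if and only if each diagonal block is, and since $C \succ 0$ holds by hypothesis, this reduces precisely to $A - B^\intercal C^{-1} B \succeq 0$, which is the claimed equivalence.

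As an alternative route that avoids invoking congruence, I could argue directly by partial minimization of the quadratic form: writing $z = \left[\begin{array}{cc} u^\intercal & v^\intercal \end{array}\right]^\intercal$, completing the square in $v$ (legitimate because $C \succ 0$) gives $z^\intercal \mathcal{M} z = \left(v + C^{-1}Bu\right)^\intercal C \left(v + C^{-1}Bu\right) + u^\intercal\!\left(A - B^\intercal C^{-1}B\right)u$; for fixed $u$ the minimum over $v$ is attained at $v = -C^{-1}Bu$ and equals $u^\intercal\!\left(A - B^\intercal C^{-1}B\right)u$, so $\mathcal{M}\succeq 0$ iff this minimum is nonnegative for every $u$, i.e. iff $A - B^\intercal C^{-1}B \succeq 0$.

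There is no substantial obstacle here, as this is the classical Schur complement lemma; the only real work is bookkeeping, namely verifying the block-product identity~\eqref{eqn: schurCongruence} entry by entry and noting that the hypothesis $C \succ 0$ is used twice, once to form $C^{-1}$ and once to discard the $C$-block when reducing positive semidefiniteness of $D$ to that of the Schur complement. I would present the congruence argument as the main proof and mention the completing-the-square computation as a remark, since the latter makes transparent why $A - B^\intercal C^{-1} B$ is exactly the right quantity.
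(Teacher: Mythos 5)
Your proof is correct: the block congruence identity checks out, the outer factors are indeed transposes of each other and invertible, and the completing-the-square remark is a valid alternative that uses $C \succ 0$ exactly where needed. Note that the paper itself does not prove this lemma — it is imported verbatim as Proposition 16.1 of the cited reference — so there is no in-paper argument to compare against; your argument is the standard Schur-complement proof and would serve as a self-contained justification if one were wanted.
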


\subsection{Convergence of Averaged Noise}
The following result from~\cite{ChenDistributed1} characterizes the behavior of the averaged measurement noise in the agents' time-averaged measurements.
\begin{lemma}[Lemma 5 in~\cite{ChenDistributed1}]\label{lem: measureNoise}
	Let $v_1, v_2, v_3, \dots$ be i.i.d. random variables with mean $\mathbb{E} \left[ v_t \right] = 0$ and finite covariance $\mathbb{E} \left[ v_t v_t^{\intercal} \right] = \Sigma.$ 
Then, we have
\begin{equation}\label{eqn: noiseConvergence}
	\mathbb{P} \left( \lim_{t \rightarrow \infty} (t+1)^{\delta_0} \left\lVert \overline{v}_t \right\rVert_2 = 0 \right) = 1,
\end{equation}
where $\overline{v}_t = \frac{1}{t+1} \sum_{j = 0}^t v_j$ for all $0 \leq \delta_0 < \frac{1}{2}.$
\end{lemma}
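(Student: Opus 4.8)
The plan is to reduce the multivariate statement to a scalar one and then establish an almost-sure growth rate for partial sums by a dyadic-block maximal-inequality argument. Since the dimension $k$ of the $v_t$ is fixed and finite and $\left\lVert \cdot \right\rVert_2 \leq \sqrt{k}\, \max_i \left\lvert \cdot \right\rvert_i$, it suffices to prove, for each scalar coordinate $i$, that $(t+1)^{\delta_0}$ times the $i$-th coordinate of $\overline{v}_t$ tends to zero a.s.; a union bound over the finitely many coordinates then yields the vector claim. Fix a coordinate, write $v_j$ for that scalar sequence (i.i.d., mean zero, variance $\sigma^2 = [\Sigma]_{i,i} < \infty$), and set $S_t = \sum_{j=0}^t v_j$. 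Because $(t+1)^{\delta_0} \overline{v}_t = S_t / (t+1)^{1-\delta_0}$, the goal becomes $S_t = o\big((t+1)^{1-\delta_0}\big)$ a.s.

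The key step is a Kolmogorov (Doob) maximal inequality applied over dyadic time blocks. For integers $\ell \geq 0$, consider the block of times $2^\ell \leq t < 2^{\ell+1}$. Since $S_t$ is a sum of independent mean-zero terms, $\{S_t\}$ is a martingale, and Kolmogorov's maximal inequality gives, for any $\epsilon > 0$,
\[ \mathbb{P}\Big( \max_{2^\ell \leq t < 2^{\ell+1}} \left\lvert S_t \right\rvert > \epsilon\, 2^{\ell(1-\delta_0)} \Big) \leq \frac{\mathrm{Var}\big(S_{2^{\ell+1}}\big)}{\epsilon^2\, 2^{2\ell(1-\delta_0)}} = \frac{\sigma^2\, 2^{\ell+1}}{\epsilon^2\, 2^{2\ell(1-\delta_0)}} = \frac{2\sigma^2}{\epsilon^2}\, 2^{\ell(2\delta_0 - 1)}. \]
The hypothesis $\delta_0 < \tfrac{1}{2}$ makes the exponent $2\delta_0 - 1$ strictly negative, so these probabilities form a summable geometric series in $\ell$. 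By the Borel--Cantelli lemma, almost surely only finitely many blocks satisfy $\max_{2^\ell \leq t < 2^{\ell+1}} \left\lvert S_t \right\rvert > \epsilon\, 2^{\ell(1-\delta_0)}$.

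To pass from the block maximum back to convergence for \emph{every} $t$, I use that for $t$ in the $\ell$-th block we have $(t+1)^{1-\delta_0} \geq 2^{\ell(1-\delta_0)}$, so $\left\lvert S_t \right\rvert / (t+1)^{1-\delta_0} \leq 2^{-\ell(1-\delta_0)} \max_{2^\ell \leq s < 2^{\ell+1}} \left\lvert S_s \right\rvert$. The Borel--Cantelli conclusion then yields $\limsup_{t \to \infty} \left\lvert S_t \right\rvert / (t+1)^{1-\delta_0} \leq \epsilon$ a.s.; letting $\epsilon$ range over a sequence decreasing to zero and intersecting the corresponding probability-one events gives $S_t / (t+1)^{1-\delta_0} \to 0$ a.s., which is exactly~\eqref{eqn: noiseConvergence} for this coordinate. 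Combining over all coordinates completes the proof.

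I expect the main technical care to lie in the dyadic bookkeeping: applying the maximal inequality to the correct running maximum, and checking that the block partition together with the scaling exponent exactly consumes the gap between $1-\delta_0$ and $\tfrac{1}{2}$. The essential use of the hypothesis is that $\delta_0 < \tfrac{1}{2}$ renders $\sum_\ell 2^{\ell(2\delta_0-1)}$ summable; at $\delta_0 = \tfrac{1}{2}$ the sum diverges and the Borel--Cantelli step fails, consistent with the central-limit fluctuations of $S_t$ being of exact order $t^{1/2}$. I note that a shorter alternative would invoke the Marcinkiewicz--Zygmund strong law, or the law of the iterated logarithm (which gives $S_t = O(\sqrt{t \log\log t})$ a.s.\ and hence $o(t^{1-\delta_0})$ for every $\delta_0 < \tfrac{1}{2}$), but the self-contained dyadic-block argument is preferable since it relies only on Kolmogorov's inequality and Borel--Cantelli.
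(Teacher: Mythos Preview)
Your argument is correct. The coordinate reduction is legitimate, the dyadic-block application of Kolmogorov's maximal inequality is sound (the block maximum over $2^\ell \leq t < 2^{\ell+1}$ is dominated by the full maximum over $0 \leq t < 2^{\ell+1}$, to which the inequality directly applies), and the Borel--Cantelli summability hinges exactly on $2\delta_0 - 1 < 0$, which is the content of the hypothesis $\delta_0 < \tfrac{1}{2}$. The passage from block bounds back to all $t$ via $(t+1)^{1-\delta_0} \geq 2^{\ell(1-\delta_0)}$ is correct, and the $\epsilon \downarrow 0$ intersection argument is standard. A trivial indexing slip---$\mathrm{Var}(S_{2^{\ell+1}}) = (2^{\ell+1}+1)\sigma^2$ rather than $2^{\ell+1}\sigma^2$ under your convention $S_t = \sum_{j=0}^t v_j$---does not affect the geometric summability.

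As for comparison with the paper: there is nothing to compare against. The paper does not prove this lemma; it is listed in the appendix purely as a citation (``Lemma~5 in~[ChenDistributed1]'') and invoked as a black box in the proof of Lemma~\ref{lem: avg1}. Your self-contained argument therefore supplies strictly more than the paper does. The alternatives you mention (Marcinkiewicz--Zygmund SLLN or LIL) would also work and are shorter to cite, but your dyadic-block proof has the virtue of using only Kolmogorov's inequality and Borel--Cantelli, which keeps the prerequisites minimal.
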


\subsection{Generalized Average Analysis}
We use the following Lemma, a consequence of Lemma 3 in~\cite{ChenSAGE}, to study the behavior of the generalized network average estimate. 
\begin{lemma}[Lemma 3 in~\cite{ChenSAGE}]\label{lem: timeVaryingSystem1}
Consider the scalar, time-varying dynamical system
\begin{equation}\label{eqn: timeVaryingSystem1}
	w_{t+1} = \left(1 - \frac{r_1(t) c_3}{\left(\left\lvert w_{t} \right\rvert + \widehat{c}_3 \right)(t+1)^{\delta_3}} \right) w_t + \frac{r_1(t) c_4}{(t+1)^{\delta_4}},
\end{equation}
where $r_1(t) = \frac{c_1}{(t+1)^{\delta_1}}$, $c_3, \widehat{c}_3, c_4 > 0$, $ 0 < \delta_3 < \delta_4 < \delta_1$, and $\delta_3 + \delta_1 < 1$. Then, for every $0 \leq \delta_0 < \delta_4 - \delta_3$, the system state $w_t$ satisfies
\begin{equation}\label{eqn: timeVaryingConvergence}
	\lim_{t \rightarrow \infty} \left( t+1 \right)^{\delta_0} w_t = 0.
\end{equation}
\end{lemma}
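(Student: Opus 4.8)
The plan is to reduce the nonlinear scalar recursion to a linear time-varying one and then invoke Lemma~\ref{lem: randomDynamicalSystem}. Writing $r_1(t) = c_1(t+1)^{-\delta_1}$, the recursion for the magnitude reads
\[
|w_{t+1}| \leq \left(1 - \frac{c_1 c_3}{(|w_t| + \widehat{c}_3)(t+1)^{\delta_1 + \delta_3}}\right)|w_t| + \frac{c_1 c_4}{(t+1)^{\delta_1 + \delta_4}}.
\]
Since $(t+1)^{\delta_1+\delta_3} \to \infty$ and $\widehat{c}_3 > 0$, the multiplicative coefficient lies in $[0,1)$ for all $t$ beyond some $T_0$ (regardless of $w_t$), so the absolute value can be dropped and the affine update in the variable $|w_t|$ has nonnegative slope, hence is order-preserving. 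The obstacle is the state-dependent gain $1/(|w_t| + \widehat{c}_3)$, which weakens exactly when $|w_t|$ is large; I must therefore first control $|w_t|$ before extracting a usable decay rate.

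First I would establish boundedness of $\{w_t\}$, which is the crux of the argument. Using $|w_t|/(|w_t|+\widehat{c}_3) \geq \tfrac12$ whenever $|w_t| \geq \widehat{c}_3$, I obtain
\[
|w_{t+1}| \leq |w_t| - \frac{c_1 c_3}{2(t+1)^{\delta_1+\delta_3}} + \frac{c_1 c_4}{(t+1)^{\delta_1+\delta_4}} \leq |w_t| - \frac{c_1 c_3}{4(t+1)^{\delta_1+\delta_3}}
\]
for $t$ large, where the last step absorbs the forcing into half the decrement because $\delta_4 > \delta_3$ makes $(t+1)^{\delta_4-\delta_3}$ eventually exceed $4c_4/c_3$. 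Thus whenever $|w_t| \geq \widehat{c}_3$ the magnitude strictly decreases, while when $|w_t| < \widehat{c}_3$ it can overshoot by at most the forcing term $c_1 c_4 (t+1)^{-(\delta_1+\delta_4)}$, which is negligible for large $t$. Because $\delta_1 + \delta_3 < 1$, the decrements sum to infinity, so $|w_t|$ cannot remain above $\widehat{c}_3$ indefinitely; combined with finiteness over the initial horizon this shows $|w_t| \leq B$ for some finite $B$ and all $t$.

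With boundedness in hand, the gain satisfies $1/(|w_t| + \widehat{c}_3) \geq 1/(B + \widehat{c}_3) =: \mu > 0$, so the recursion is dominated by the linear time-varying system
\[
|w_{t+1}| \leq \left(1 - \frac{a}{(t+1)^{\delta_1+\delta_3}}\right)|w_t| + \frac{b}{(t+1)^{\delta_1+\delta_4}}, \qquad a = c_1 c_3 \mu,\quad b = c_1 c_4.
\]
Finally I would compare $|w_t|$ with the solution $\overline{u}_t$ of the corresponding equality recursion started at $\overline{u}_T = |w_T|$; order-preservation of the affine update gives $|w_t| \leq \overline{u}_t$ for all $t \geq T$, and $\overline{u}_t$ falls exactly under Lemma~\ref{lem: randomDynamicalSystem} with the deterministic gain $r(t) = a(t+1)^{-(\delta_1+\delta_3)} \in [0,1)$ and forcing exponent $\delta_1 + \delta_4$. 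The hypotheses $0 < \delta_1 + \delta_3 < \delta_1 + \delta_4$ and $\delta_1 + \delta_3 < 1$ hold, so that lemma yields $(t+1)^{\delta_0}\overline{u}_t \to 0$ for every $\delta_0 < (\delta_1+\delta_4) - (\delta_1+\delta_3) = \delta_4 - \delta_3$, whence $(t+1)^{\delta_0}|w_t| \to 0$ and the claim follows. The main obstacle is the boundedness step: the vanishing gain at large $|w_t|$ precludes a direct geometric-decay bound and forces reliance on the additive-decrease mechanism made possible by the divergence of $\sum (t+1)^{-(\delta_1+\delta_3)}$, i.e.\ by the hypothesis $\delta_1 + \delta_3 < 1$.
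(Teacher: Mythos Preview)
Your proof follows the same two-stage plan as the paper: first establish $\sup_t |w_t| < \infty$, then use that bound to replace $1/(|w_t|+\widehat{c}_3)$ by a uniform positive constant and invoke Lemma~\ref{lem: randomDynamicalSystem}. The difference lies only in how boundedness is obtained. The paper introduces an auxiliary running-maximum system $v_{t+1} = \max(|v_t|, |\widehat v_{t+1}|)$ that dominates $\sup_{j\leq t}|w_j|$, notes that $v_t$ is nondecreasing with $v_1>0$, and uses the lower bound $v_t/(v_t+\widehat c_3)\geq v_1/(v_1+\widehat c_3)$ to show this monotone sequence eventually freezes. You instead split directly on the threshold $|w_t|\gtrless \widehat c_3$: above threshold the factor $|w_t|/(|w_t|+\widehat c_3)\geq \tfrac12$ forces $|w_{t+1}|<|w_t|$ once the forcing term is absorbed, while below threshold $|w_{t+1}|\leq \widehat c_3+\epsilon$. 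Your route is more elementary and avoids the auxiliary construction; the paper's running-max device makes the comparison with the original sequence explicit. Both arrive at the same linearized inequality and finish identically via Lemma~\ref{lem: randomDynamicalSystem}. One remark: your observation that the decrements sum to infinity (because $\delta_1+\delta_3<1$) is correct but not needed for boundedness; the case analysis alone already yields $|w_t|\leq \max\bigl(\max_{s\leq T_1}|w_s|,\,\widehat c_3+\epsilon\bigr)$ for all $t\geq T_1$.
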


{\color{black}For completeness, we outline the proof of  Lemma~\ref{lem: timeVaryingSystem1}. 

\begin{proof}[Proof of Lemma~\ref{lem: timeVaryingSystem1}]
First, we analyze scalar, time-varying dynamical systems of the form
\begin{equation}\label{eqn: timeVaryProof1}
\begin{split}
	\widehat{m}_{t+1} &= \left(1 - \frac{r_2(t)}{m_t + c_5} \right) m_t + r_1(t), \\
	m_{t+1} &= \max \left(\left\lvert m_t \right\rvert, \left\lvert \widehat{m}_{t+1} \right\rvert \right),
\end{split}
\end{equation}
with initial condition $m_0 \geq 0$, where $r_1(t) = \frac{c_1}{(t+1)^{\delta_1}}$, $r_2(t) = \frac{c_2}{(t+1)^{\delta_2}}$, and $c_1, c_2, c_5 > 0$. Specifically, we show that, under~\eqref{eqn: timeVaryProof1}, $m_t$ satisfies
\begin{equation}\label{eqn: timeVaryProof2}
	\sup_{t \geq 0} m_t < \infty.
\end{equation}
We show that there exists finite $T > 0$ such that $m_{T+1} = m_{T}$. 

Note that, by consturction, $m_t$ is non-negative and non-decreasing, and $m_1 > 0$. By definition, there exists $T$ large enough such that $\frac{r_2(T)}{m_T + c_5} \leq 1$, which ensures that $\left\lvert \widehat{m}_{T+1} \right\rvert = \widehat{m}_{T+1}$. Then, for $T$ large enough, a sufficient condition for $m_{T+1} = m_{T}$ is $m_T - \widehat{m}_{T+1} \geq 0$. By performing algebraic manipulations on~\eqref{eqn: timeVaryProof1}, we may show that
\begin{equation}\label{eqn: timeVaryProof3}
	m_T - \widehat{m}_{T+1} = r_2(T) \frac{m_T}{m_T + c_5} - r_1(T).
\end{equation}
The term $\frac{m_T}{m_T + c_5}$ increases in $m_T$. Recall that $m_T$ nondecreasing and $m_1 > 0$, so we have
\begin{equation}\label{eqn: timeVaryProof5}
	\frac{m_T}{m_T + c_5} \geq \frac{m_1}{m_1 + c_5} > 0.
\end{equation}

Now, let $c_6 = \frac{m_1}{m_1 + c_5}$. Substituting into~\eqref{eqn: timeVaryProof5}, and performing algebraic manipulations, we may show that sufficient condition for $m_T - \widehat{m}_{T+1} \geq 0$ is
\begin{equation}\label{eqn: timeVaryProof6}
	T \geq \left(\frac{c_1}{c_2 c_6}\right)^{\frac{1}{\delta_1 - \delta_2}} - 1. 
\end{equation}
The right hand side of~\eqref{eqn: timeVaryProof6} is finite, so there exists finite $T$ such that $m_{T+1} = m_T$. Since~\eqref{eqn: timeVaryProof6} is a \textit{sufficient} condition, if the right hand side negative, then, taking $T = 1$ satisfies $m_{T+1} = m_{T}$. Further note that all $t \geq T$ also satisfy~\eqref{eqn: timeVaryProof6}, which means that $m_T = m_{T+1} = m_{T+2} = \dots.$ Then, we have $\sup_{t \geq 0} m_t = m_T < \infty$, yielding, ~\eqref{eqn: timeVaryProof2}. 

Second, we now use~\eqref{eqn: timeVaryProof1} and~\eqref{eqn: timeVaryProof2} to analyze $\sup_{0 \leq j \leq t} \left\lvert w_j \right\rvert$. Define the scalar, time-varying dynamical system
\begin{equation}\label{eqn: timeVaryProof7}
\begin{split}
	\widehat{v}_{t+1} &=  \left(1 - \frac{r_1(t) c_3}{\left(\left\lvert v_{t} \right\rvert + \widehat{c}_3 \right)(t+1)^{\delta_3}} \right) v_t + \frac{r_1(t) c_4}{(t+1)^{\delta_4}}, \\
	v_{t+1}& = \max \left(\left\lvert v_t \right\rvert, \left\lvert \widehat{v}_{t+1} \right\rvert \right),
\end{split}
\end{equation}
where $r_1(t), r_2(t), c_3, \widehat{c}_3, c_4, \delta_3,$ and $\delta_4$ have the same values as they do in~\eqref{eqn: timeVaryingSystem1}, with initial condition $v_0 = \left\lvert w_0 \right\rvert$. By construction, we have
\begin{equation}\label{eqn: timeVaryProof8}
	v_t \geq \sup_{0 \leq j \leq t} \left\lvert w_j \right\rvert,
\end{equation} 
for all $t \geq 0$. The system in~\eqref{eqn: timeVaryProof7} is of the same form as the system in~\eqref{eqn: timeVaryProof1}, which, from~\eqref{eqn: timeVaryProof2}, means that
\begin{equation}\label{eqn: timeVaryProof9}
	\sup_{t \geq 0} \left\lvert w_t \right \rvert \leq \sup_{t \geq 0} v_t < \infty.
\end{equation}

Third, letting $\widehat{w} = \sup_{t \geq 0} \left\lvert w_t \right\rvert$, from~\eqref{eqn: timeVaryingSystem1}, for $t$ large enough, we have
\begin{equation}\label{eqn: timeVaryProof10}
	w_{t+1} \leq \left(1 - \frac{r_1(t) c_3}{\left(\widehat{w} + \widehat{c}_3 \right)(t+1)^{\delta_3}} \right) w_t + \frac{r_1(t) c_4}{(t+1)^{\delta_4}}.
\end{equation}
The relation in~\eqref{eqn: timeVaryProof10} falls under the purview of Lemma~\ref{lem: randomDynamicalSystem}, which yields the desired result~\eqref{eqn: timeVaryingConvergence}. 
\end{proof}}

\bibliography{IEEEabrv,References}

\begin{thebibliography}{10}

\bibitem{SIAMConsensus}
{\sc N.~Agmon and D.~Peleg}, {\em Fault-tolerant gathering algorithms for
  autonomous mobile robots}, {SIAM} {J. Comput.}, 36 (2006), pp.~56--82.

\bibitem{SIAMChangeDetection}
{\sc E.~Bayraktar and L.~Lai}, {\em Byzantine fault tolerant distributed
  quickest change detection}, {SIAM} {J. Control Optim.}, 53 (2015),
  pp.~575--591.

\bibitem{ConnectedRoads}
{\sc M.~Boban, A.~Kousaridas, K.~Manolaksi, J.~Eichinger, and W.~Xu}, {\em
  Connected roads of the future}, {IEEE} Veh. Technol. Mag., 13 (2018),
  pp.~110--123.

\bibitem{ModernGraph}
{\sc B.~Bollob\'{a}s}, {\em Modern Graph Theory}, Springer-Verlag, New York,
  NY, 1998.

\bibitem{Stochastic}
{\sc L.~B. Castaneda, V.~Arunachalam, and S.~Dharmaraja}, {\em Introduction to
  Probability and Stochastic Processes with Applications}, John Wiley and Sons,
  Hoboken, NJ, 2012, ch.~11.

\bibitem{ChenDistributed1}
{\sc Y.~Chen, S.~Kar, and J.~M.~F. Moura}, {\em Resilient distributed
  estimation through adversary detection}, {IEEE} Trans. Signal Process., 66
  (2018), pp.~2455--2469.

\bibitem{ChenIoT}
{\sc Y.~Chen, S.~Kar, and J.~M.~F. Moura}, {\em {The Internet of Things: Secure
  Distributed Inference}}, {IEEE} Signal Process. Mag., 35 (2018), pp.~64--75.

\bibitem{ChenSAGE}
{\sc Y.~Chen, S.~Kar, and J.~M.~F. Moura}, {\em Resilient distributed parameter
  estimation with heterogeneous data}, {IEEE} Trans. Signal Process., 67
  (2019), pp.~4918--4933.

\bibitem{Spectral}
{\sc F.~R.~K. Chung}, {\em Spectral Graph Theory}, Wiley, Providence, RI, 1997.

\bibitem{ResilientConsensus}
{\sc D.~Dolev, N.~A. Lynch, S.~S. Pinter, E.~W. Stark, and W.~E. Weihl}, {\em
  Reaching approximate agreement in the presence of faults}, Journal of the
  ACM, 33 (1986), pp.~499--516.

\bibitem{Fawzi}
{\sc H.~Fawzi, P.~Tabuada, and S.~Diggavi}, {\em Secure estimation and control
  for cyber-physical systems under adversarial attacks}, {IEEE} Trans. Autom.
  Control, 59 (2014), pp.~1454--1467.

\bibitem{SIAMByzantine}
{\sc P.~Feldman and S.~Micali}, {\em An optimal probabilistic protocol for
  synchronous byzantine agreement}, {SIAM} {J. Comput.}, 26 (1997),
  pp.~873--933.

\bibitem{RobotMapping}
{\sc D.~Fox, J.~Ko, K.~Konolige, B.~Limketkai, D.~Schulz, and B.~Stewart}, {\em
  {Distributed Multirobot Exploration and Mapping}}, Proc. IEEE, 94 (2006),
  pp.~1325--1339.

\bibitem{HACMS}
{\sc F.~Franchetti, T.~M. Low, S.~Mitsch, J.~P. Mendoza, L.~Gui,
  A.~Phaosawasdi, D.~Padua, S.~Kar, J.~M.~F. Moura, M.~Franusich, J.~Johnson,
  A.~Platzer, and M.~M. Veloso}, {\em {High-Assurance SPIRAL: End-to-End
  Guarantees for Robot and Car Control}}, {IEEE} Control Syst. Mag., 37 (2017),
  pp.~82--103.

\bibitem{Schur}
{\sc J.~Gallier}, {\em Geometric Methods and Applications For Computer Science
  and Engineering}, Springer, New York, NY, 2011, ch.~16.

\bibitem{VehiclesExample2}
{\sc M.~Harris}, {\em Research hacks self-driving car sensors}, Sept. 2015,
  \url{https://spectrum.ieee.org/cars-that-think/transportation/self-driving/researcher-hacks-selfdriving-car-sensors}.

\bibitem{PowerGrid}
{\sc Y.~F. Huang, S.~Werner, J.~Huang, N.~Kashyap, and V.~Gupta}, {\em {State
  Estimation in Electric Power Grids}}, {IEEE} Signal Process. Mag., 29 (2012),
  pp.~33--43.

\bibitem{Varshney2}
{\sc B.~Kailkhura, Y.~S. Han, S.~Brahma, and P.~K. Varshney}, {\em Distributed
  {B}ayesian detection in the presence of {B}yzantine data}, {IEEE} Trans.
  Signal Process., 63 (2015), pp.~5250--5263.

\bibitem{Kar1}
{\sc S.~Kar and J.~M.~F. Moura}, {\em Convergence rate analysis of distributed
  gossip (linear parameter) estimation: Fundamental limits and tradeoffs},
  {IEEE} J. Select. Topics Signal Process., 5 (2011), pp.~674--690.

\bibitem{Kar2}
{\sc S.~Kar and J.~M.~F. Moura}, {\em Consensus+innovations distributed
  inference over networks}, {IEEE} Signal Process. Mag., 30 (2013),
  pp.~99--109.

\bibitem{Kar4}
{\sc S.~Kar, J.~M.~F. Moura, and H.~V. Poor}, {\em Distributed linear parameter
  estimation: Asymptotically efficient adaptive strategies}, {SIAM} {J. Control
  Optim.}, 51 (2013), pp.~2200--2229.

\bibitem{Kar3}
{\sc S.~Kar, J.~M.~F. Moura, and K.~Ramanan}, {\em Distributed parameter
  estimation in sensor networks: Nonlinear observation models and imperfect
  communication}, {IEEE} Trans. Inf. Theory, 58 (2012), pp.~3575--3605.

\bibitem{KhanKF}
{\sc U.~A. Khan and J.~M.~F. Moura}, {\em {Distributing the Kalman Filter for
  Large-Scale Systems}}, {IEEE} Trans. Signal Process., 56 (2008),
  pp.~4919--4935.

\bibitem{ByzantineGenerals}
{\sc L.~Lamport, R.~Shostak, and M.~Pease}, {\em The {B}yzantine generals
  problem}, ACM Transactions on Programming Languages and Systems, 4 (1982),
  pp.~382--401.

\bibitem{LeBlanc1}
{\sc H.~J. LeBlanc, H.~Zhang, X.~Koustsoukos, and S.~Sundaram}, {\em Resilient
  asymptotic consensus in robust networks}, {IEEE} J. Select. Areas in Comm.,
  31 (2015), pp.~766 -- 781.

\bibitem{Pasqualetti1}
{\sc F.~Pasqualetti, A.~Bicchi, and F.~Bullo}, {\em Consensus computation in
  unreliable networks: A system theoretic approach}, {IEEE} Trans. Autom.
  Control, 57 (2012), pp.~90--104.

\bibitem{Pasqualetti2}
{\sc F.~Pasqualetti, F.~D\"{o}rfler, and F.~Bullo}, {\em Attack detection and
  identification in cyber-physical systems}, {IEEE} Trans. Autom. Control, 58
  (2013), pp.~2715--2729.

\bibitem{SahuRandomFields}
{\sc A.~K. Sahu, D.~Jakoveti\'{c}, and S.~Kar}, {\em {$\mathcal{CIRFE}$: A
  Distributed Random Fields Estimator}}, {IEEE} Trans. Signal Process., 66
  (2018), pp.~4980--4995.

\bibitem{Shoukry}
{\sc Y.~Shoukry and P.~Tabuada}, {\em Event-triggered state observers for
  sparse sensor noise/attack}, {IEEE} Trans. Autom. Control, 61 (2016),
  pp.~2079--2091.

\bibitem{Sundaram3}
{\sc S.~Sundaram and B.~Gharesifard}, {\em Distributed optimization under
  adversarial nodes}, {IEEE} Trans. Autom. Control, 64 (2019), pp.~1063--1076.

\bibitem{Varshney1}
{\sc A.~Vempaty, L.~Tong, and P.~K. Varshney}, {\em Distributed inference with
  {B}yzantine data}, {IEEE} Signal Process. Mag., 30 (2013), pp.~65--75.

\bibitem{TrafficRSU}
{\sc O.~Younis and N.~Moayen}, {\em {Employing Cyber-Physical Systems: Dynamic
  Traffic Light Control at Road Intersections}}, {IEEE} {Internet of Things
  Journal}, 4 (2017), pp.~2286--2296.

\bibitem{Zhang2}
{\sc J.~Zhang, R.~Blum, L.~M. Kaplan, and X.~Lu}, {\em Functional forms of
  optimum spoofing attacks for vector parameter estimation in quantized sensor
  networks}, {IEEE} Trans. Signal Process., 65 (2017), pp.~705--720.

\bibitem{FusionCenterEstimation1}
{\sc J.~Zhang, R.~Blum, X.~Lu, and D.~Conus}, {\em Asymptotically optimum
  distributed estimation in the presence of attacks}, {IEEE} Trans. Signal
  Process., 63 (2015), pp.~1086--1101.

\bibitem{SIAMFormationControl}
{\sc M.~Zhu and S.~Martinez}, {\em On attack-resilient distributed formation
  control in operator vehicle networks}, {SIAM} {J. Control Optim.}, 52 (2014),
  pp.~3176--3202.

\end{thebibliography}

\end{document}